\documentclass [10pt,reqno]{amsart}

\newlength{\myhmargin} \setlength{\myhmargin}{1in} \addtolength{\myhmargin}{18pt}
\usepackage{amsmath,amssymb,amsthm,amsfonts,amscd,flafter,
graphicx,verbatim,pinlabel,mathrsfs,enumitem}
\usepackage[all]{xy}
\usepackage{epstopdf}
\epstopdfsetup{suffix=}
\usepackage[colorlinks=false]{hyperref}
\usepackage[all]{hypcap}

\usepackage{pstricks}
\usepackage[latin1]{inputenc}

 \setlength{\parskip}{2pt}



\newtheorem{theorem}{Theorem}[section]
\newtheorem{lemma}[theorem]{Lemma}

\newtheorem{prop}[theorem]{Proposition}
\newtheorem{proposition}[theorem]{Proposition}

\newtheorem{conjecture}[theorem]{Conjecture}

\newtheorem{question}[theorem]{Question}

\theoremstyle{definition}
\newtheorem{definition}[theorem]{Definition}
\newtheorem{remark}[theorem]{Remark}

\renewcommand{\epsilon}{\varepsilon}

\newcommand{\link}{{\bf Link}}
\newcommand{\diag}{{\bf Diag}}
\newcommand{\spect}{{\bf Spect}_\F}
\newcommand{\vect}{{\bf Vect}_\F}

\newcommand{\mor}{\text{Mor}}

\hyphenation{com-pact-i-fi-cation}
\hyphenation{dim-en-sional}
\hyphenation{Uhlen-beck}
\hyphenation{mon-o-pole}
\hyphenation{man-i-fold}
\hyphenation{mo-no-pole}
\hyphenation{re-du-ci-ble}
\hyphenation{re-du-ci-bles}
\hyphenation{geo-me-tric}
\hyphenation{complex-geo-me-tric}
\hyphenation{de-cou-page}
\hyphenation{Kron-hei-mer}
\DeclareMathAlphabet{\mathpzc}{OT1}{pzc}{m}{it}
\usepackage{mathrsfs}

\newcommand{\adata}{\mathfrak{d}}

\newcommand{\Z}{\mathbb{Z}}

\newcommand{\Q}{\mathbb{Q}}

\newcommand{\R}{\mathbb{R}}
\newcommand{\F}{\mathbb{F}}

\newcommand{\HI}{I^\sharp}
\newcommand{\CI}{C^\sharp}

\renewcommand{\bar}{\overline}

\newcommand{\cA}{\mathcal{A}}
\newcommand{\cB}{\mathcal{B}}

\newcommand{\HF}{\widehat{\text{\em HF}}}
\newcommand{\CF}{\widehat{\text{\em CF}}}
\newcommand{\Khr}{\text{\em Khr}}
\newcommand{\Kh}{\text{\em Kh}}
\newcommand{\CKh}{\text{\em CKh}}
\newcommand{\CKhr}{\text{\em CKhr}}

\newcommand{\cKhr}{\text{\em CKhr}}
\newcommand{\cKh}{\text{\em CKh}}

\begin{document}
\thispagestyle{empty}
\title[On the functoriality of Khovanov-Floer theories]{On the functoriality of Khovanov-Floer theories}
\author[John A. Baldwin]{John A. Baldwin}
\address{Department of Mathematics \\ Boston College}
\email{john.baldwin@bc.edu}

\author[Matthew Hedden]{Matthew Hedden}
\address{Department of Mathematics \\ Michigan State University}
\email{mhedden@math.msu.edu}

\author[Andrew Lobb]{Andrew Lobb}
\address{Department of Mathematical Sciences\\ Durham University}
\email{andrew.lobb@durham.ac.uk}

\thanks{JAB was  supported by NSF Grants DMS-1104688, DMS-1406383, and NSF CAREER Grant DMS-1454865.}

\thanks{MH was partially supported by NSF CAREER Grant DMS-1150872, and an Alfred P. Sloan Research Fellowship.}

\thanks{AL was partially supported by EPSRC Grants EP/K00591X/1 and EP/M000389/1.}

\begin {abstract}
We introduce the notion of a \emph{Khovanov-Floer theory}. Roughly,  such a theory assigns a filtered chain complex over $\Z/2\Z$ to a link diagram such that (1) the $E_2$ page of the resulting spectral sequence is naturally isomorphic to  the Khovanov homology of the link; (2) this filtered complex  behaves nicely under  planar isotopy, disjoint union, and  1-handle addition; and (3) the spectral sequence collapses at the $E_2$ page for any diagram of the unlink. We prove that  a Khovanov-Floer theory naturally yields a functor from the link  cobordism category to the category of spectral sequences.  In particular, every page (after $E_1$) of the spectral sequence accompanying a Khovanov-Floer theory is a link invariant, and   an oriented  cobordism in $S^3\times [0,1]$ between links in $S^3$ induces a  map between each page of their spectral sequences, invariant up to smooth isotopy of the cobordism rel boundary. 

We  then show that the   spectral sequences relating Khovanov homology to Heegaard Floer homology and singular instanton knot homology are induced by Khovanov-Floer theories and are   therefore \emph{functorial} in the manner described above, as has been conjectured for some time. We further show that Szab{\'o}'s \emph{geometric spectral sequence} comes from a Khovanov-Floer theory, and  is thus functorial as well. In addition, we illustrate how our framework can be used to give another proof that Lee's spectral sequence is functorial and that Rasmussen's invariant is a knot invariant. Finally, we use this machinery to define some potentially new knot invariants.

\end {abstract}

\maketitle

\section{Introduction}

A primary goal of this paper is to establish the invariance and, more generally, the \emph{functoriality} of several important spectral sequences relating Khovanov homology to Floer homology.  We describe all such spectral sequences by using the general framework of a \emph{Khovanov-Floer theory}.  This framework allows us to answer, in particular, a question of Kronheimer-Mrowka from 2010 and a question of Ozsv{\'a}th-Szab{\'o} from 2003. We begin with some background and motivation.

Khovanov's groundbreaking paper \cite{kh1} associates  to a link diagram a  bigraded chain complex whose homology is, up to isomorphism,  an invariant of the underlying link type.    This invariant  \emph{categorifies} the Jones polynomial in the sense that the  graded Euler characteristic of Khovanov homology is  equal to the Jones polynomial.  One reason to promote a polynomial-valued invariant to a group-valued invariant  is that it makes sense to talk about morphisms between groups; groups form a category. This extra structure is often useful. In the case of Khovanov homology with $\F=\Z/2\Z$ coefficients, Jacobsson showed \cite{jacobsson} that a \emph{movie} for a cobordism in $S^3\times[0,1]$ with starting and ending diagrams $D_0$ and $D_1$ induces a map \[\Kh(D_0)\to\Kh(D_1),\] and that \emph{equivalent} movies define the same map  (see also  \cite{bncob,khcob,CMW}). In other words, Khovanov homology is really a functor \[\Kh:\diag\to\vect\] from the diagrammatic link cobordism category (see Subsection \ref{ssec:funct}) to the category of vector spaces over $\F$.

 Rasmussen put this additional structure to spectacular use in \cite{ras3}, combining this functoriality   with work of   Lee \cite{Lee} to define a numerical invariant of knots  which provides a lower bound on the smooth $4$-ball genus. He then used this  invariant to compute the smooth $4$-ball genera of torus knots, affirming a conjecture of Milnor   first proven by Kronheimer and Mrowka using gauge theory \cite{km8}.  When combined with work of  Freedman,  Quinn, and Rudolph \cite{FQ,rud93}, Rasmussen's proof of  Milnor's conjecture also provides the first existence result for exotic $\R^4$'s which avoids gauge theory, Floer homology, or any significant tools from analysis.

Categorification has  also played  a major role in establishing connections between quantum invariants  and  Floer homology. These now ubiquitous connections generally take the form of a spectral sequence having Khovanov homology as its $E_2$ page and  abutting to the relevant Floer-homological invariant. 
The first such connection was discovered by  Ozsv{\'a}th and Szab{\'o} in \cite{osz12}. Given a based link $L\subset S^3$ with  diagram  $D$, they defined a spectral sequence with  $E_2$ page  the reduced Khovanov homology $\Khr(D)$ of $D$,    abutting to the Heegaard Floer homology $\HF(-\Sigma(L))$ of the branched double cover of $S^3$  along  $L$ with  reversed orientation.   Similar spectral sequences in  monopole, framed instanton, and plane Floer homology  have since been discovered by Bloom, Scaduto, and Daemi,  respectively \cite{bloom,scaduto,daemi}.   Perhaps most significantly, Kronheimer and Mrowka defined in \cite{km3} a spectral sequence with $E_2$ page the Khovanov homology of $D$,  abutting to the singular instanton knot homology $\HI(\bar L)$  of the mirror of $L$. This spectral sequence played a central role in their celebrated proof that Khovanov homology detects the unknot \cite{km3}. 
In addition to their structural significance, these and  related spectral sequence have been used to:
\begin{itemize}
\item study the knot Floer homology of fibered knots \cite{lrob1, lrob2},
\item establish tightness and non-fillability of certain contact structures \cite{baldpla},
\item prove that Khovanov homology detects the unknot \cite{km3},
\item prove that Khovanov's categorification of the $n$-colored Jones polynomial detects the unknot for $n\geq 2$ \cite{griw},
\item detect the unknot with Khovanov homology of certain satellites \cite{heddcable, heddwatson},
\item prove that  Khovanov homology detects the unlink \cite{heddni, batseed}.
\item relate Khovanov homology to the twist coefficient of braids \cite{heddmark}.
\end{itemize}

Each of these spectral sequences  arises from a filtered chain complex associated with a link diagram and some additional, often   analytic, data. However, one can generally show that the $(E_i,d_i)$ page of the resulting  spectral sequence  does not depend on this additional data, up to canonical isomorphism, for $i\geq 2$. Indeed, we may think of Kronheimer and Mrowka's construction as assigning to a  planar diagram $D$ for a link $L$ a sequence \[KM(D) = \{(E_i^{KM}(D),d_i^{KM}(D))\}_{i\geq 2}\] with \[E_2^{KM}(D) = \Kh(D)\,\,\textrm{ and }\,\,E_{\infty}^{KM}(D) \cong  \HI(\bar L).\] Likewise, Ozsv{\'a}th and Szab{\'o}'s construction assigns to a  planar diagram $D$ for a based link $L$ a sequence \[OS(D) = \{(E_i^{OS}(D),d_i^{OS}(D))\}_{i\geq 2}\] with \[E_2^{OS}(D) = \Khr(D)\,\,\textrm{ and }\,\,E_{\infty}^{OS}(D) \cong \HF(-\Sigma(L)).\] Given that the $E_2$ and $E_{\infty}$ pages of   these spectral sequence are, up to isomorphism, link type invariants, a natural question is whether all intermediate pages are as well. Affirmative answers to this question were given in \cite{bald7} and \cite{km3} for the Heegaard Floer and singular instanton Floer spectral sequences, respectively. In this paper, we consider the question of invariance much more widely (that is, the invariance of all spectral sequences given by what we call \emph{Khovanov-Floer theories}).  In fact, we go much further: invariance is a consequence of \emph{functoriality} of all Khovanov-Floer theories.

For now, let us continue the discussion of functoriality in the instanton and Heegaard Floer cases.  We write $\link$ to denote the link cobordism category, whose objects are oriented links in $S^3:=\R^3\cup\{\infty\}$, and whose morphisms are  isotopy classes of  oriented, collared link cobordisms   in $S^3\times [0,1]$.  In particular, two  surfaces represent the same morphism if they differ by smooth isotopy fixing a collar neighborhood of the boundary pointwise. As explained in Subsection \ref{ssec:funct}, Khovanov homology can be made into a functor \[\Kh:\link\to\vect\] in a  natural way. Meanwhile, Kronheimer and Mrowka showed that a cobordism $S$ from $L_0$ to $L_1$ gives rise to a map on singular instanton knot homology, \[\HI({-}S):\HI(\bar L_0)\to\HI(\bar L_1),\] which is an invariant of the morphism in $\link$ represented by $S$. That is, singular instanton knot homology also defines a functor \[\HI:\link\to\vect.\] So, in essence, the $E_2$ and $E_\infty$ pages of Kronheimer and Mrowka's spectral sequence behave functorially with respect to link cobordism. It is therefore natural to ask, as Kronheimer and Mrowka did in 2010, whether their \emph{entire} spectral sequence (after the $E_1$ page) defines a functor from $\link$ to the spectral sequence category $\spect$, of which an object is a   sequence $\{(E_i,d_i)\}_{i\ge i_0}$  of chain complexes over $\F$ satisfying \[H_*(E_i,d_i)=E_{i+1},\] and a morphism is a sequence of chain maps \[\{F_i :  (E_i,d_i) \rightarrow (E'_i,d'_i)\}_{i\geq i_0}\]
satisfying $F_{i+1}=(F_i)_*$. We record their question  informally as follows.

\begin{question}{\em (Kronheimer-Mrowka \cite[Section 8.1]{km3})}
\label{ques:ifunctor}
Is the spectral sequence from Khovanov homology to singular instanton knot homology functorial?
\end{question}

One can ask a similar question of Ozsv{\'a}th and Szab{\'o}'s spectral sequence. Reduced Khovanov homology can be thought of as a functor \[\Khr:\link_{\infty}\to \vect,\] where  $\link_\infty$ is the  based link cobordism category, whose objects are oriented links in $S^3$ passing through $\infty$, and whose morphisms are  isotopy classes of oriented, collared link cobordisms in $S^3\times [0,1]$ containing the arc $\{\infty\}\times[0,1]$.  In this category, two  surfaces represent the same morphism if they differ by smooth isotopy fixing both a collar neighborhood of the boundary and this arc pointwise. Given a based link cobordism $S$ from $L_0$ to $L_1$, the branched double cover of $S^3\times[0,1]$ along $S$ is a smooth, oriented 4-dimensional cobordism $\Sigma(S)$ from $\Sigma(L_0)$ to $\Sigma(L_1)$, and therefore induces a map on Heegaard Floer homology \[\HF(-\Sigma(S)):\HF(-\Sigma(L_0))\to\HF(-\Sigma(L_1)),\footnote{This map is usually denoted by $F_{-\Sigma(S)}$.}\] which is an invariant of the morphism in $\link_\infty$ represented by $S$. That is,  the Heegaard Floer homology of branched double covers  defines a functor \[\HF(\Sigma(\cdot)):\link_\infty\to\vect\] as well. This  leads to the natural question, posed  by Ozsv{\'a}th and Szab{\'o} in 2003, as to whether their  spectral sequence defines a functor from $\link_\infty$ to $\spect$.

\begin{question}{\em (Ozsv{\'a}th-Szab{\'o} \cite[Section 1.1]{osz12})}
\label{ques:hffunctor}
Is the spectral sequence from Khovanov homology to the Heegaard Floer homology of the branched double cover functorial?
\end{question}


In this paper, we answer both  Questions \ref{ques:ifunctor} and \ref{ques:hffunctor} in the affirmative. Indeed, we prove that Kronheimer-Mrowka's and Ozsv{\'a}th-Szab{\'o}'s spectral sequences are functorial, expressed more precisely in the    two theorems below. In these theorems, 
\[SV_j:\spect\to\vect\] is the forgetful functor  which sends  $\{(E_i,d_i)\}_{i\geq i_0}$ to its $j$th page $E_j$.

\begin{theorem}
\label{thm:sskm} There exists a functor \[KM:\link\to\spect\] with  $\Kh=SV_2\circ KM$ such that $KM(L) \cong KM(D)$ for any  diagram $D$ for $L$.
\end{theorem}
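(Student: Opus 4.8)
The plan is to derive Theorem~\ref{thm:sskm} as a formal consequence of the main structural result of the paper: that a \emph{Khovanov-Floer theory} $\mathcal{A}$ automatically induces a functor from the link cobordism category $\link$ to the spectral sequence category $\spect$, refining the Khovanov functor $\Kh:\link\to\vect$ on the $E_2$ page. So the first step is to show that Kronheimer and Mrowka's construction \cite{km3} \emph{is} a Khovanov-Floer theory in the precise axiomatic sense we have introduced. Concretely, to a link diagram $D$ one assigns the filtered complex computing $I^\sharp(\bar L)$ built from the cube of resolutions (the singular instanton chain groups of the resolutions, with the differential filtered by the cube grading), and one must check the three axioms: (1) the $E_2$ page of the associated spectral sequence is naturally isomorphic to $\Kh(D)$ --- this is exactly the content of Kronheimer--Mrowka's identification of their $E_1$ differential with the Khovanov differential; (2) the filtered complex is natural under planar isotopy, is multiplicative under disjoint union, and carries the ``split'' and ``merge'' maps associated to a $1$-handle attachment compatibly with the cube structure --- these follow from the functoriality of singular instanton homology under the elementary cobordisms, which Kronheimer--Mrowka established; and (3) the spectral sequence collapses at $E_2$ for any unlink diagram --- this holds because $\HI$ of an unlink is concentrated appropriately so that the ranks match $\Kh$ of the unlink.

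Granting that $KM$ is a Khovanov-Floer theory, the functor $KM:\link\to\spect$ with $\Kh = SV_2\circ KM$ is produced by the general machinery. The remaining clause, $KM(L)\cong KM(D)$ for any diagram $D$ of $L$, is the statement that the spectral sequence of the theory evaluated on the link $L$ (i.e.\ the image of the identity object under the functor) agrees, page by page for $i\ge 2$, with the spectral sequence one gets directly from a chosen diagram; this is immediate from the way the functor is assembled --- $KM(L)$ is \emph{defined} by choosing a diagram and the naturality/invariance statement says any two choices are canonically isomorphic.

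The main obstacle, and hence where the real work lies, is verifying axiom (2) --- the compatibility of $1$-handle (merge/split) maps with the cube-of-resolutions filtration --- at the \emph{chain} level rather than merely at the level of homology. Kronheimer and Mrowka's cobordism maps are defined via instanton counting on the cylinder, and one must check that for each elementary cobordism the induced map respects the resolution cube in the way Khovanov's own merge/split maps do, so that it descends correctly to every page. In practice this reduces to a careful bookkeeping of how the singular instanton functor interacts with the unoriented skein triangle, using the exactness of Kronheimer--Mrowka's unoriented skein exact triangle as the input that makes the $E_1\Rightarrow E_2$ identification natural under these maps. Once this naturality is in hand, everything else --- including functoriality under composition of cobordisms and invariance under Reidemeister and movie moves --- is handled uniformly by the Khovanov-Floer formalism, with no further appeal to gauge theory.
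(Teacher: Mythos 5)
Your route matches the paper's: show that Kronheimer--Mrowka's cube-of-resolutions construction is a Khovanov--Floer theory (Proposition~\ref{prop:kmss}), then invoke the general result that every Khovanov--Floer theory is functorial (Theorem~\ref{thm:kfunct2}, giving Theorem~\ref{thm:khfunct}), with the promotion from $\diag$ to $\link$ handled as in Remark~\ref{rmk:promotion}. Two points of emphasis differ, though. For the unlink condition you argue by comparing ranks of $E_2$ with $\HI$ of the unlink; the paper's argument is purely formal and does not touch the target of the spectral sequence: $\Kh$ of any unlink diagram is concentrated in a single homological degree, so the homologically filtered spectral sequence simply has nowhere to go after $E_2$. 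You also flag the $1$-handle axiom as the main chain-level obstacle, but the paper handles that one quite directly --- realize the larger cube as the mapping cone of the $1$-handle map $T$ and cite \cite[Proposition~8.11]{km3} for the $E_1$ agreement --- whereas the paper's acknowledged loose end is condition~(3), the disjoint-union/excision axiom, where the higher-filtration components of the chain map have to be built by counting instantons on the excision cobordism over higher-dimensional families of metrics and perturbations, ``mimicking the arguments in \cite[Section~6]{km3}.''
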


\begin{theorem}
\label{thm:ssos} There exists a functor \[OS:\link_\infty\to\spect\] with $\Khr=SV_2\circ OS$ such that $OS(L) \cong OS(D)$ for any  diagram $D$ for $L$.
\end{theorem}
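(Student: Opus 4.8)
Both Theorems~\ref{thm:sskm} and~\ref{thm:ssos} will be deduced from the main technical result of the paper — that every Khovanov-Floer theory induces a functor from the (based) link cobordism category to $\spect$ restricting to the Khovanov (resp.\ reduced Khovanov) functor on $E_2$. Granting that, the plan is to exhibit Khovanov-Floer theories whose associated spectral sequences are canonically, from $E_2$ onwards, the Kronheimer-Mrowka and Ozsv{\'a}th-Szab{\'o} spectral sequences, and then simply read off the conclusion.

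For Theorem~\ref{thm:sskm} I would take the theory that assigns to a diagram $D$ with $n$ crossings the height-filtered cube-shaped complex whose $2^n$ vertices carry the singular instanton homology groups of the complete resolutions of $D$ and whose differential is assembled from Kronheimer-Mrowka's unoriented skein exact triangle~\cite{km3}; this is precisely the complex underlying their spectral sequence. Verifying the Khovanov-Floer axioms then amounts to three checks. First, that the $E_1$ page is the Khovanov cube complex $\CKh(D)$ with its usual differential: this is essentially Kronheimer-Mrowka's identification $E_2\cong\Kh(D)$, and it supplies the natural isomorphism demanded by axiom~(1). Second, compatibility with planar isotopy, disjoint union, and $1$-handle addition; here the first two are immediate from the functoriality of $\HI$ on $\link$ and the split-union formula, while the $1$-handle case requires matching the map induced by an elementary band cobordism with the appropriate merge-or-split map on resolutions. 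Third, degeneration over unlink diagrams, which holds because $\HI$ of the $m$-component unlink has $\F$-dimension $2^m$ (a known computation), equal to that of $\Kh$ of the $m$-component unlink, leaving no room for higher differentials. The argument for Theorem~\ref{thm:ssos} is parallel, now filling the resolution cube with the hat-flavored Heegaard Floer complexes $\CF$ of the branched double covers of the resolutions (summing over $\Spinc$ structures), as in Ozsv{\'a}th-Szab{\'o}~\cite{osz12}; the whole construction takes place over the based category $\link_p$, since the arc $\{p\}\times[0,1]$ is what pins down the cobordism $\Sigma(S)$ and its induced map, and axiom~(3) holds because $\HF(\#^{m-1}(S^1\times S^2))\cong\F^{2^{m-1}}$ matches the rank of $\Khr$ of the $m$-component unlink. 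In both cases the invariance statements $KM(L)\cong KM(D)$ and $OS(L)\cong OS(D)$ are then formal consequences of functoriality, applied to the trivial cobordism presented by $D$ at one end.

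The step I expect to be the main obstacle is the $1$-handle axiom: one must show that the map the Floer theory assigns to an elementary band cobordism is a \emph{filtered} chain map and that the map it induces on $E_1$ — equivalently, on $E_2\cong\Kh$ — is exactly the Bar-Natan/Khovanov elementary merge-or-split cobordism map. In the instanton setting this should follow by locating the band map inside Kronheimer-Mrowka's functorial framework for $\HI$ and comparing it, resolution by resolution, with their skein maps; in the Heegaard Floer setting a band in $L$ corresponds to a $2$-handle attachment along an unknot in a standard ball inside $\Sigma(L)$, and one analyzes the resulting triangle map on $\CF$ across the cube, in the spirit of the invariance arguments of Ozsv{\'a}th-Szab{\'o} and Baldwin~\cite{bald7}. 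A secondary but ever-present subtlety is the bookkeeping of basepoints and of the coherence isomorphisms furnished by axiom~(2): it is these, fed through the general functoriality theorem, that force the $E_2$-level map induced by a cobordism $S$ to agree with $\Kh(S)$ (resp.\ $\Khr(S)$) on the nose, and hence give $\Kh=SV_2\circ KM$ and $\Khr=SV_2\circ OS$.
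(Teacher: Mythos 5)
Your proposal follows the paper's route: establish the general functoriality theorem for (reduced) Khovanov-Floer theories, then verify that Ozsv{\'a}th-Szab{\'o}'s filtered cube-of-resolutions complex, together with its canonical $E_2\cong\Khr(D)$ identification, satisfies the reduced axioms, and read off Theorem~\ref{thm:ssos}. Two of your anticipated axiom checks differ from the paper's in instructive ways. For the unlink-degeneration axiom you propose a rank comparison, $\dim_\F\HF(\#^{m-1}(S^1\times S^2))=2^{m-1}=\dim_\F\Khr$ of the $m$-component unlink; this works, but the paper uses a simpler observation: for \emph{any} diagram of an unlink, $\Khr$ is concentrated in a single homological grading, so higher differentials vanish for grading reasons alone, with no Floer computation required. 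For the $1$-handle axiom, which you rightly identify as the main burden, you propose to exhibit the band-cobordism map directly and verify it is filtered and agrees with the Khovanov merge/split map on $E_1$; the paper instead notes that if $D$ and $D'$ are the $0$- and $1$-resolutions of an extra crossing in a diagram $\tilde D$, then $C^{\tilde\adata}(\tilde D)$ is already the mapping cone of a degree-$0$ filtered map $T:C^{\adata}(D)\to C^{\adata'}(D')$ assembled from components of $d^{\tilde\adata}(\tilde D)$, so filteredness is automatic and the $E_1$-comparison reduces to a vertex-by-vertex identification via the $\Lambda^*H_1$-module structure of $\HF$. Both of your routes are viable, but the paper's choices avoid the triangle-map analysis you expected to need.
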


\noindent That is, proper  isotopy classes of link cobordisms induce  well-defined maps on the intermediate pages of these spectral sequences, which agree at $E_2$ with the induced maps on Khovanov and reduced Khovanov homology. In short, each intermediate page is a \emph{functorial} link invariant.


One notable consequence of these theorems is that link isotopies determine isomorphisms of these spectral sequences. In particular, an isotopy $\phi$ taking $L$ to $L'$ determines a cylindrical cobordism $S_{\phi}\subset S^3\times[0,1]$ from $L$ to $L'$, and, therefore, a morphism \[\Psi_{\phi}:=KM(S_{\phi}):KM(L)\to KM(L')\] (likewise for based isotopies and $OS$). While interesting in its own right, this new structure also  recovers  the results from \cite{bald7} and \cite{km3} that the isomorphism classes of the intermediate
pages of these spectral sequences are  link type invariants:   the morphism $\Psi_{\phi}$ is an isomorphism in $\spect$ since the cobordism $S_{\phi}$ is an isomorphism in $\link$.

Theorems \ref{thm:sskm} and \ref{thm:ssos} follow from a much more general and powerful framework developed in this paper.  The key idea is the notion of a \emph{Khovanov-Floer theory}, alluded to above and formally introduced in Section \ref{sec:defs}. Very roughly, this is something which assigns a filtered chain complex  to a link diagram (and possibly extra data) such that (1) the $E_2$ page of the resulting spectral sequence is naturally isomorphic to  the Khovanov homology of the diagram;  (2)  the filtered complex behaves in certain nice ways under planar isotopy,  disjoint union, and  diagrammatic 1-handle addition; and (3) the spectral sequence collapses at the $E_2$ page for any diagram of the unlink. The import of this notion is indicated by our main theorem below, which asserts that the spectral sequence associated with a Khovanov-Floer theory is automatically functorial.
\begin{theorem}
\label{thm:khfunct}
The spectral sequence associated with a Khovanov-Floer theory  defines a  functor \[F:\link\to\spect\] with $\Kh = SV_2\circ F$.
\end{theorem}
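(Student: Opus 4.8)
The plan is to build $F$ from maps attached to the elementary pieces of a link cobordism, reducing the whole verification to one soft principle about filtered complexes together with the already-established functoriality of Khovanov homology. Recall that a Khovanov-Floer theory $\A$ equips a diagram $D$ with a filtered complex $\A(D)$ whose $E_1$ page is the Khovanov chain complex $\CKh(D)$ with $d_1$ the Khovanov differential --- so that $E_2(\A(D))\cong\Kh(D)$ naturally --- which behaves compatibly with planar isotopy, disjoint union, and diagrammatic $1$-handle addition (realising the corresponding maps of Khovanov complexes on $E_1$), and whose spectral sequence collapses at $E_2$ for any diagram of an unlink. The soft principle is: if $\phi,\psi\colon\A(D)\to\A(D')$ are filtered chain maps inducing the same map on $E_2$, then they induce the same morphism on every page $E_i$ with $i\ge 2$. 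This is immediate from the defining relation $F_{i+1}=(F_i)_*$ for a morphism of spectral sequences: the whole tail $\{F_i\}_{i\ge2}$ is determined by $F_2$, so chain homotopies at the level of $E_1$ --- indeed any discrepancy invisible on $\Kh$ --- do not affect the higher pages. Hence, to define the cobordism map it suffices to exhibit \emph{some} filtered chain map $\A(D_0)\to\A(D_1)$ inducing on $E_2=\Kh$ the Khovanov cobordism map, and to know that this $E_2$-map is independent of the chosen movie; the latter is exactly the functoriality of Khovanov homology over $\F$ recalled in the introduction.

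I would then proceed as follows. Represent a cobordism $\Sigma$ from $L_0$ to $L_1$ by a movie, so that consecutive frames differ by a planar isotopy, a Reidemeister move, or a Morse move (a birth, a saddle, or a death), and attach a filtered chain map on $\A$ to each step. Planar isotopies and disjoint unions are handled verbatim by the axioms, and a saddle is a diagrammatic $1$-handle. A birth of an unknot $U$ induces a map $\A(D)\to\A(D\sqcup U)$ via the disjoint-union equivalence $\A(D\sqcup U)\simeq\A(D)\otimes\A(U)$ together with a filtered chain map $\F\to\A(U)$; the latter exists because the unlink-collapse axiom forces $\A(U)$ to be filtered homotopy equivalent to its homology $\Kh(U)$ with the zero differential, onto which the unit $\F\to\Kh(U)$ of the Khovanov Frobenius algebra is a filtered chain map. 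A death is dual, using the counit $\Kh(U)\to\F$. A Reidemeister move, whose underlying cobordism is a cylinder, I would realise by a short standard sequence of births and saddles. In every case the step induces the usual Khovanov TQFT map on $E_2$, so composing along the movie yields a filtered chain map $\A(D_0)\to\A(D_1)$ inducing $\Kh(\Sigma)$ on $E_2$, and I define $F(\Sigma)$ to be the morphism of spectral sequences it induces on the pages $i\ge2$.

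Well-definedness is then a formality: any two movies representing the same morphism of $\link$ induce the same map $\Kh(D_0)\to\Kh(D_1)$ (Jacobsson \cite{jacobsson}; see also \cite{bncob,khcob,CMW}), via the Carter--Saito movie moves, so the associated filtered chain maps induce the same morphism on all pages $i\ge2$ by the soft principle; and the choices made in the birth, death, and Reidemeister pieces are invisible for the same reason. Concatenating movies gives $F(\Sigma'\circ\Sigma)=F(\Sigma')\circ F(\Sigma)$ and $F(\id)=\id$, so $F\colon\link\to\spect$ is a functor; on objects one sets $F(L):=F(D)$ for any diagram $D$ of $L$, well defined up to canonical isomorphism since Reidemeister-move cylinders are isomorphisms of $\link$. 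Finally $SV_2\circ F=\Kh$: on objects this is the $E_2$-identification of a Khovanov-Floer theory, and on morphisms $SV_2(F(\Sigma))=\Kh(\Sigma)$ holds by construction.

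I expect the substance to lie not in the movie combinatorics --- the movie move theorem and the functoriality of $\Kh$ are imported wholesale --- but in two preliminary points. First, in making the soft principle precise: one must check that a filtered chain map does induce a bona fide morphism of spectral sequences, that its tail is governed by its $E_2$-component, and --- the genuine output of the theorem --- that the Khovanov cobordism map $\Kh(\Sigma)$ actually commutes with every higher differential $d_i$ of $\A(D)$, a statement that is meaningless for $\Kh(\Sigma)$ in isolation and truly needs the filtered lift. Second, in the construction of the birth and death maps, which are not part of the definition of a Khovanov-Floer theory and must be produced from disjoint union together with a lemma asserting that a filtered complex over $\F$ whose spectral sequence collapses at $E_2$ is filtered homotopy equivalent to its homology. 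With those two ingredients in hand, assembling $F$ and verifying $\Kh=SV_2\circ F$ is essentially bookkeeping.
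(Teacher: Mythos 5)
The framework you set up (the ``soft principle,'' which is Lemma \ref{lem:sseqmaps}, and the birth/death maps built from disjoint union plus the unlink-collapse axiom, which is Proposition \ref{prop:02handle}) matches the paper, but there is a genuine gap at the one step that carries the real content: your treatment of Reidemeister moves. You claim that a Reidemeister move, ``whose underlying cobordism is a cylinder,'' can be realised by a short standard sequence of births and saddles. This cannot work as stated. In the movie calculus, a planar isotopy preserves the crossing number of the diagram, a $0$- or $2$-handle adds or deletes a crossingless circle, and a $1$-handle replaces two arcs by two arcs inside a disk containing no crossings; hence any movie built only from planar isotopies and handle attachments has the same number of crossings in its initial and terminal frames. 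A Reidemeister I (resp.\ II) move changes the crossing number by one (resp.\ two), so no such movie can even have the correct terminal diagram $D'$, let alone induce the standard isomorphism $\Kh(D)\to\Kh(D')$; for Reidemeister III the counting argument is silent but you offer no construction. Since a Khovanov-Floer theory comes with no axiom for Reidemeister moves, producing a filtered map realising the standard Reidemeister isomorphism on $E_2$ is exactly the nontrivial point of the theorem, and your proposal does not supply it.

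The paper's route (Proposition \ref{prop:reid}, via Lemmas \ref{lem:R1}--\ref{lem:R3}) keeps the Reidemeister move in the movie but uses Carter--Saito movie moves to replace the single-move movie by an equivalent one in which the move takes place on a \emph{split unknotted} piece: one births unknotted circles by $0$-handles, performs the Reidemeister move on the resulting disjoint unlink diagram, and then merges back by $1$-handles (the RIII case is first reduced through RII moves). On the unlink factor, conditions (3) and (4) together with Lemma \ref{lem:einfty} show that $\cA(U_i)$ contains the trivial complex $(\Kh(U_i),id)$, so the standard Reidemeister isomorphism between unlink diagrams is realised tautologically by a filtered map, which is then tensored with the identity on $\cA(D)$ using condition (3). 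You already have all the ingredients for this (you use the same collapse argument for births and deaths), but the Reidemeister step needs this argument and not a decomposition into births and saddles. A minor additional point: the definition only requires $E_2(\cA(D))\cong\Kh(D)$, not that $E_1$ be the Khovanov chain complex with $d_1$ the Khovanov differential, so your axioms should be invoked at the $E_2$ level as in Definition \ref{def:kftheory}.
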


In particular, the  spectral sequence defined by a Khovanov-Floer theory is, up to isomorphism, a link type invariant. What is striking is how this invariance and the additional functoriality promised in the theorem are guaranteed by just the few, rather weak conditions that go into the definition of a Khovanov-Floer theory.

To prove Theorem \ref{thm:khfunct}, we first  show that the spectral sequence associated with a Khovanov-Floer theory  defines a functor from  $\diag$ to $\spect$. The morphism of spectral sequences this functor assigns to a movie is induced by a filtered chain map between the filtered  complexes associated with the diagrams  at either end of the movie. To define this filtered chain  map, we represent the movie as a composition of \emph{elementary} movies, each corresponding to  a planar isotopy, diagrammatic handle attachment, or Reidemeister move. We  assign a filtered  map to each   elementary movie so that the induced map on $E_2$ agrees with the corresponding Khovanov map, and we define the map associated with the original movie to be the composite of these elementary movie maps. For     planar isotopy and handle attachment, these elementary maps are essentially built into the definition of a Khovanov-Floer theory. More  interesting is our assignment of filtered maps to Reidemeister moves. The idea  is to first arrange via movie moves that  the Reidemeister move  takes place amongst unknotted components. Then  one constructs the desired    map using the behavior of a Khovanov-Floer theory under disjoint union and handle attachment, and the fact that the associated spectral sequence collapses at $E_2$ for any diagram of an unlink. The fact that equivalent movies are assigned equal morphisms (so that we actually get a functor from $\diag$) follows immediately  from the fact that these morphisms agree on $E_2$ with the corresponding Khovanov map. Finally, we promote this  to a functor from $\link$ in a relatively standard way.

The power of our framework lies in the fact it is often easy to determine whether a given  construction satisfies the conditions of a Khovanov-Floer theory, whereas proving  the functoriality (or even invariance) of a construction without the benefit of this notion has proven tricky in practice (in particular, it was not known at all before this paper whether Kronheimer-Mrowka's and Ozsv{\'a}th-Szab{\'o}'s constructions were functorial). This principle is elaborated  in Remark \ref{rmk:weakstrong}. 

In Section \ref{sec:kfthys}, we show that several well-known constructions are indeed Khovanov-Floer theories.  Importantly, we prove the following.

\begin{theorem}
\label{thm:kmoskf}
Kronheimer-Mrowka's and Ozsv{\'a}th-Szab{\'o}'s spectral sequences come from Khovanov-Floer theories.\footnote{Really, Ozsv{\'a}th and Szab{\'o}'s construction is what we term a \emph{reduced} Khovanov-Floer theory.} 
\end{theorem}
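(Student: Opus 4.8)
The plan is to verify, for each of the two constructions, the three defining axioms of a (reduced) Khovanov-Floer theory. Recall that we must exhibit, for each planar link diagram $D$ (resp.\ based diagram), a filtered chain complex over $\F$ whose associated spectral sequence has $E_2$ page naturally isomorphic to $\Kh(D)$ (resp.\ $\Khr(D)$), and we must check the naturality of this identification under planar isotopy, the behavior under disjoint union and diagrammatic $1$-handle addition, and the collapse at $E_2$ for unlink diagrams. For the Kronheimer-Mrowka side, the relevant filtered complex is the one underlying their spectral sequence: the cube-of-resolutions complex computing $\HI^\sharp$ of the resolutions of $\bar D$, filtered by homological cube grading, which they already show has $E_1$ page the Khovanov cube complex and hence $E_2$ page $\Kh(D)$; for the Ozsv\'ath-Szab\'o side it is the filtered complex of $\bigoplus \CF(-\Sigma(D_v))$ over vertices $v$ of the cube, with differential assembled from the branched-double-cover cobordism maps, whose $E_2$ page they identify with $\Khr(D)$.

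First I would treat the $E_2$-page axiom and its naturality. The isomorphism $E_2 \cong \Kh(D)$ (resp.\ $\Khr(D)$) is precisely the content of \cite{km3} and \cite{osz12} respectively, so the only thing to check is that it is \emph{natural} with respect to planar isotopy of $D$ in the sense required by the definition in Section \ref{sec:defs}. This follows because a planar isotopy induces, on the nose, an isomorphism of cubes of resolutions (the resolution diagrams are carried along by the isotopy), hence an isomorphism of the filtered complexes, and under the $E_2$-identifications this agrees with the identity-like Khovanov map assigned to a planar isotopy; the verification is a diagram chase using the functoriality of $\HI^\sharp$ (resp.\ of the Heegaard Floer cobordism maps) under the cylindrical cobordisms realizing the isotopy. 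Second, the disjoint-union axiom: a diagram $D = D' \sqcup D''$ has cube of resolutions the product of the cubes of $D'$ and $D''$, and the total complex of the resulting double complex is filtered-quasi-isomorphic to the tensor product of the two filtered complexes, because $\HI^\sharp$ (resp.\ $\CF$ of a connected sum / disjoint union of branched double covers) satisfies the appropriate K\"unneth formula; one checks this K\"unneth map is filtered and induces the standard Khovanov tensor identification on $E_2$.

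The main obstacle will be the $1$-handle-addition axiom. A diagrammatic $1$-handle attached to $D$ along an embedded arc produces a new diagram $D'$ together with a prescribed filtered chain map (of degree zero for a merge, degree one for a split, up to grading conventions) inducing the corresponding Khovanov merge/split map on $E_2$; I would construct this map from the cobordism-induced map in the underlying Floer theory --- for Kronheimer-Mrowka, the map on $\HI^\sharp$ induced by the elementary saddle cobordism between the branched double covers of the resolution diagrams, assembled over the cube; for Ozsv\'ath-Szab\'o, the corresponding cobordism map on $\CF$ of branched double covers --- and then verify it is filtered (respects the cube filtration up to the expected shift) and that its $E_2$-incarnation is the Khovanov saddle map. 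The filteredness is essentially automatic from the cube structure, but identifying the induced map on $E_2$ with the Khovanov saddle map is the delicate point: it amounts to checking a compatibility between the Floer-theoretic saddle maps and the Frobenius-algebra structure that governs Khovanov's differential, and here one invokes (for Heegaard Floer) the computation of the maps associated to the relevant Hopf-link-type cobordisms already present in \cite{osz12}, and (for instanton homology) the analogous local computations in \cite{km3}. Finally, the unlink-collapse axiom is immediate: for an unlink diagram all higher differentials in the spectral sequence vanish for dimension/grading reasons, since $\HI^\sharp$ (resp.\ $\HF(-\Sigma(\text{unlink})) = \HF$ of a connected sum of $S^1\times S^2$'s) has total rank equal to that of $\Kh$ (resp.\ $\Khr$) of the unlink, forcing $E_2 = E_\infty$. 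Assembling these four verifications for each theory completes the proof.
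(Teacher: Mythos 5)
Your proposal is correct and follows essentially the same route as the paper: identifying the filtered cube-of-resolutions complexes, noting the $E_2$ page identification from \cite{km3} and \cite{osz12}, and then checking conditions (1)--(4) of Definition~\ref{def:kftheory} in turn (planar isotopy via the induced isomorphism of cubes, $1$-handles via realizing the enlarged diagram's complex as a mapping cone whose $E_1$ map is the Khovanov saddle map, disjoint union/connected sum via excision or Heegaard-diagram connected sum inducing a K\"unneth-type filtered map, and collapse for unlinks). Two small remarks. First, for the unlink-collapse axiom the paper uses a cheaper argument: $\Kh$ of an unlink diagram is supported in homological degree $0$, so the higher differentials $d_i$, $i\geq 2$, which shift homological degree by $i$, vanish for purely grading-theoretic reasons; your rank-comparison argument also works but requires importing the computations of $\HI^\sharp$ and $\HF$ of unlinks/connected sums of $S^1\times S^2$'s rather than just the structure of the $E_2$ page. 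Second, you should be a bit more careful in the Ozsv\'ath--Szab\'o case to state the reduced-theory axiom: since their construction is a \emph{reduced} Khovanov-Floer theory, the relevant tensoring axiom concerns connected sum of based diagrams (not disjoint union), implemented by connected-summing Heegaard multi-diagrams, and this is where \cite{bald7} is invoked; your parenthetical ``connected sum / disjoint union'' conflates the two cases slightly.
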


Observe that Theorems \ref{thm:sskm} and \ref{thm:ssos}  follow immediately from Theorem \ref{thm:kmoskf} combined with Theorem \ref{thm:khfunct}. Though we do not do so here, one can   show that the spectral sequences defined by Bloom, Scaduto, and Daemi  also come from Khovanov-Floer theories and are therefore functorial as well.

The other examples in  Section \ref{sec:kfthys} concern  constructions which do \emph{not}  come from Floer homology. The first of these is  Szab{\'o}'s \emph{geometric spectral sequence}  \cite{szabo}, which relates the Khovanov homology of $L$ to another combinatorial link type invariant which (though defined without  Floer homology) is conjecturally isomorphic to \[\HF(-\Sigma(L))\oplus\HF(-\Sigma(L)).\]   We prove the following.

\begin{theorem}
\label{thm:szabokf}
Szab{\'o}'s spectral sequence comes from a Khovanov-Floer theory.
\end{theorem}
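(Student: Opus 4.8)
The plan is to exhibit Szab\'o's construction directly as a Khovanov--Floer theory and then apply Theorem \ref{thm:khfunct}. To a diagram $D$ with an ordering of its crossings, Szab\'o's recipe \cite{szabo} assigns the Khovanov chain group $\CKh(D)$ equipped with a differential $D_{\mathrm{Sz}} = d_0 + d_1 + d_2 + \cdots$, where $d_0$ is the Khovanov differential and each $d_k$ with $k \ge 1$ is a sum, over the cube of resolutions, of the maps attached to Szab\'o's ``configurations'' and strictly increases the homological (cube) grading by at least two. Filtering $\CKh(D)$ by this grading thus makes $(\CKh(D), D_{\mathrm{Sz}})$ a filtered complex whose associated graded differential vanishes, so that $E_1 \cong \CKh(D)$ with $E_1$-differential $d_0$ and hence $E_2 \cong \Kh(D)$; this is condition (1). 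Since the complex is well defined up to canonical filtered isomorphism (independent of the chosen ordering), and since on $E_1$ the group, its differential, and all of the structure maps below it literally coincide with their Khovanov counterparts, the required naturality of the isomorphism $E_2 \cong \Kh$ is automatic.

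Next I would verify condition (2). A planar isotopy $D \to D'$ induces a bijection between resolutions identifying the two cubes together with all configurations and all of Szab\'o's maps, hence a filtered chain isomorphism $\CKh(D) \to \CKh(D')$ descending to the Khovanov planar isotopy map on $E_2$. For a disjoint union $D = D_1 \sqcup D_2$, every configuration occurring in $D_{\mathrm{Sz}}$ is supported on a connected sub-collection of circles and so lies entirely within $D_1$ or entirely within $D_2$; consequently $(\CKh(D), D_{\mathrm{Sz}})$ is the filtered tensor product of the Szab\'o complexes of $D_1$ and $D_2$, and the induced map on $E_2$ is the K\"unneth isomorphism realizing the Khovanov disjoint-union map. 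For a diagrammatic $1$-handle taking $D$ to $D'$ --- so that $D$ and $D'$ have the same crossings and, for each resolution $v$, the underlying $1$-manifolds $D_v$ and $D'_v$ differ by a merge or a split along a fixed band $b$ --- I would take the associated map to be given vertexwise by the Frobenius merge/split map $F_b$ at $b$, exactly as in Khovanov homology, and then argue that $F_b$ commutes with $D_{\mathrm{Sz}}$, so that it is a filtered chain map inducing the Khovanov $1$-handle map on $E_2$; for the $d_0$-part the commutation is the standard fact that the Khovanov $1$-handle map is a chain map.

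Finally, for condition (3), let $D$ be a diagram of the $n$-component unlink. As $\CKh(D)$ is finite-dimensional, the spectral sequence converges to $\mathrm{gr}\,\widetilde H(D)$, where $\widetilde H(D) = H_*(\CKh(D), D_{\mathrm{Sz}})$ is Szab\'o's invariant. By Szab\'o's theorem that $\widetilde H$ is a link invariant, $\widetilde H(D)$ has the same dimension as $\widetilde H$ of the crossingless $n$-component unlink diagram, namely $2^n$, which equals the dimension of $E_2 = \Kh(\text{unlink})$. Since $E_\infty$ is a subquotient of $E_2$ of the same finite $\F$-dimension, the two coincide and every differential $d_i$ with $i \ge 2$ vanishes, i.e.\ the spectral sequence collapses at $E_2$. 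With conditions (1)--(3) in hand, Theorem \ref{thm:khfunct} delivers the asserted functoriality.

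The step I expect to be the main obstacle is the $1$-handle compatibility in condition (2). Unlike in the Floer-theoretic examples of Theorem \ref{thm:kmoskf}, where the $1$-handle map arises for free as a cobordism map in the ambient Floer theory, here one must produce an explicit combinatorial map and check by hand that $F_b$ commutes with the \emph{entire} Szab\'o differential --- not just its leading term --- which requires genuinely engaging with the combinatorics of Szab\'o's configurations, in particular with the cases in which the band $b$ meets the circles carrying a configuration; there the needed commutation should be read off from the fact that the merge and split maps are morphisms of modules over Szab\'o's local Frobenius operations. Establishing the locality assertion invoked in the disjoint-union case is a smaller instance of the same analysis.
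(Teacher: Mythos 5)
Your overall template --- verify the conditions of Definition \ref{def:kftheory} and apply Theorem \ref{thm:khfunct} --- matches the paper's, and your handling of planar isotopy, disjoint union, and the unlink collapse is essentially correct, modulo two small remarks. First, for the unlink the paper gives a one-line argument that avoids invoking Szab\'o's invariance theorem: $\Kh$ of an unlink diagram is supported in homological degree $0$, and each $d_i$ with $i\ge 2$ shifts that degree, so the collapse at $E_2$ is automatic. Second, Szab\'o's construction depends on auxiliary data beyond an ordering of crossings, namely a \emph{decoration} of the diagram; the paper invokes Szab\'o's result that different decorations give filtered chain maps inducing the identity on $E_2$ to obtain a well-defined quasi-isomorphism class of $\Kh(D)$-complexes. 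You should make that dependence explicit since it is the whole reason for the quasi-isomorphism-class bookkeeping.

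The genuine gap is the $1$-handle step. You propose to take the vertexwise Frobenius merge/split map $F_b$ ``exactly as in Khovanov homology'' and then argue that $F_b$ commutes with the full Szab\'o differential. It does not, in general. Let $\tilde D$ be the diagram obtained from $D$ by replacing the band $b$ with a crossing $c$, so that $D$ and $D'$ are the $0$- and $1$-resolutions of $\tilde D$ at $c$, and set
\[ T \;=\; \bigoplus_{I\le J}\, d_{\,(I,0),(J,1)}\;:\; C^\adata(D)\longrightarrow C^{\adata'}(D'), \]
the sum of all components of $d^{\tilde\adata}(\tilde D)$ that cross the cube face in the $c$-direction. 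Szab\'o's \emph{Extension Formula} \cite[Definition 2.5]{szabo} says that $C^{\tilde\adata}(\tilde D)$ is the mapping cone of $T$ --- hence $T$, \emph{not} $F_b$, is the degree-$0$ filtered chain map. One has $T = F_b + (\text{higher-order terms})$, with the corrections coming from Szab\'o's $\ge 2$-dimensional configurations that involve $c$; these are nonzero whenever $c$ participates in a ladybug or higher configuration, which is the typical situation, and they are exactly what is needed to absorb the failure of $F_b$ to intertwine the higher Szab\'o maps on $D$ and $D'$. The heuristic that ``merge and split maps are morphisms of modules over Szab\'o's local Frobenius operations'' does not save you: Szab\'o's configuration maps are not Frobenius-algebra operations in the TQFT sense, and $F_b$ does not commute with them. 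The correct move, which the paper makes, is to use $T$ directly; on $E_1$ only the $I=J$ summand of $T$ survives, recovering the Khovanov merge/split map, so $T$ induces the Khovanov $1$-handle map on $E_2$ as required.
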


\noindent  Theorem \ref{thm:szabokf} provides an easy, alternative proof of Szab{\'o}'s result that this spectral sequence is a link type invariant, while furthermore showing that it behaves functorially with respect to link cobordism.

For another example, we consider Lee's deformation of Khovanov homology  \cite{Lee}. For knots, this deformation produces a  spectral sequence 
abutting to the direct sum $\F\oplus\F$, with each summand  supported in a single quantum grading.   Rasmussen's invariant, mentioned earlier, may be described as the average of these  two gradings. We can easily prove the following.

\begin{theorem}
\label{thm:leekf}
Lee's spectral sequence comes from a Khovanov-Floer theory.\footnote{We actually prove this for a version of Lee's spectral sequence defined  over $\F$ by Bar-Natan  \cite{bncob}.}
\end{theorem}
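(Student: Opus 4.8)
The plan is to verify that Bar-Natan's $\F$-coefficient deformation of Khovanov homology satisfies the three axioms of a Khovanov-Floer theory from Section \ref{sec:defs}, which by Theorem \ref{thm:khfunct} immediately yields functoriality. Recall that Bar-Natan's construction, like Khovanov's, assigns to a link diagram $D$ the same underlying $\F$-vector space $\cKh(D) = \bigoplus_{v} V^{\otimes k(v)}$ (sum over vertices $v$ of the cube of resolutions, with $k(v)$ the number of circles in the resolution $v$), but with a deformed differential: on each edge of the cube, the merge map is the usual multiplication $m$ on $V = \F[X]/(X^2)$, while the split map is $\Delta$ \emph{plus} the "dotted" correction term $1 \mapsto 1\otimes 1$ (equivalently, one works over $\F[X]/(X^2 - 1)$ rather than $\F[X]/(X^2)$). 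This total differential raises homological grading by $1$ and \emph{drops} the quantum grading by at most $2$; filtering $\cKh(D)$ by the quantum grading therefore makes it a filtered complex whose associated graded differential is exactly the Khovanov differential. Thus the $E_1$ page is $\cKh(D)$ with the Khovanov differential, and the $E_2$ page is canonically $\Kh(D)$ — this is axiom (1), the naturality being automatic because the identification is the identity on the underlying module.

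Next I would check axiom (2), the behavior under planar isotopy, disjoint union, and diagrammatic $1$-handle addition. Planar isotopy acts on $\cKh(D)$ by the tautological identification of cubes of resolutions (the resolutions are combinatorially identical), which is a filtered isomorphism inducing the standard Khovanov isotopy map on $E_2$; disjoint union corresponds to the tensor product of the filtered complexes, exactly as for Khovanov homology, and the Künneth map is filtered. For a diagrammatic $1$-handle attachment joining $D$ to $D'$ — i.e. a saddle cobordism realized by a single band move in the plane — one gets a map of cubes of resolutions: at each vertex the band either merges two circles or splits one, and one uses the corresponding Frobenius operation ($m$ for a merge, $\Delta$ plus the dot term for a split) to define the chain map $\cKh(D) \to \cKh(D')$. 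This is precisely the Bar-Natan saddle map; it is filtered (each building block drops quantum grading by at most $2$, and the saddle map shifts it by $1$) and on $E_2$ it induces the Khovanov saddle cobordism map, since passing to associated graded discards the dot correction and recovers Khovanov's formula. So all three pieces of structure required by axiom (2) are present and compatible with their Khovanov counterparts on $E_2$.

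The main obstacle — and the only step with real content — is axiom (3): the spectral sequence must collapse at $E_2$ for every diagram of an unlink. Here I would invoke Lee's computation, in the form adapted by Bar-Natan over $\F$: for a diagram $D$ of the $n$-component unlink, the homology of $(\cKh(D), d_{BN})$ has total dimension $2^n$, the same as $\dim_{\F} \Kh(D)$. Since $E_2 = \Kh(D)$ already has dimension $2^n$ and every subsequent page is a subquotient, the ranks of all higher differentials $d_i$ for $i \ge 2$ must vanish, forcing $E_2 = E_\infty$; this is collapse at $E_2$. The cleanest way to establish the dimension count is to exhibit Lee's canonical generators: for an $n$-circle diagram these are the $2^n$ elements of the form $\bigotimes (X \pm 1)$, one factor per circle, which are cycles for $d_{BN}$ and visibly span a $2^n$-dimensional subspace of homology; combined with the upper bound $\dim H_*(\cKh(D)) \le \dim \cKh(D)$... no — rather, combined with the fact that the $E_\infty$ page of \emph{any} diagram of an unlink injects into $E_2 = \Kh$ of dimension $2^n$ and these canonical classes show the $E_\infty$ dimension is at least $2^n$, we are done. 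With axioms (1)–(3) verified, Theorem \ref{thm:khfunct} gives the functor $F:\link\to\spect$ with $\Kh = SV_2\circ F$, which is exactly the assertion that Lee's spectral sequence comes from a Khovanov-Floer theory, hence is functorial.
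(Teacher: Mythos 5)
Your overall strategy coincides with the paper's: Subsection 5.4 records that the Bar-Natan complex is filtered by the quantum grading with $E_1=E_2=\Kh(D)$, quotes Turner's computation of its homology, and then notes that the verification of conditions (1)--(4) of Definition \ref{def:kftheory} ``proceeds almost exactly as in the previous subsections.'' Your planar-isotopy, disjoint-union and saddle-map checks are the same ones, and your dimension-count argument for collapse on unlink diagrams (the total rank of $E_\infty$ equals the rank of the Bar-Natan homology, which is $2^n=\dim\Kh(D)=\dim E_2$, so no higher differential can be nonzero) is a valid alternative to the argument the paper has in mind, namely that every higher differential raises the homological grading by one while $\Kh$ of an unlink diagram is concentrated in homological degree zero.

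However, your description of the theory itself is wrong in ways that matter precisely because we work over $\F=\Z/2\Z$. Bar-Natan's Frobenius algebra is $\F[x]/(x^2+x)$, not $\F[X]/(X^2-1)$: over $\Z/2$ the substitution $X\mapsto X+1$ identifies $\F[X]/(X^2-1)$, with its counit, with the undeformed Khovanov algebra, so the deformation you name is filtered quasi-isomorphic to the Khovanov complex, its spectral sequence collapses at $E_2$ for every link, and it carries no $s$-invariant --- this degeneration is exactly why Bar-Natan's variant exists, and it is not the theory of the theorem. Moreover the deformation changes the merge map as well as the split map ($x\otimes x\mapsto x$ in addition to $1\mapsto 1\otimes 1$); if you deform only the split while keeping the multiplication of $\F[X]/(X^2)$, the Frobenius compatibility fails (compare $\Delta'\circ m$ with $(m\otimes\mathrm{id})\circ(\mathrm{id}\otimes\Delta')$ on $x\otimes 1$: they differ by $x\otimes 1$), the cube faces no longer commute, and $d^2\neq 0$, so there is no filtered complex to feed into the machinery at all. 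Relatedly, the canonical generators over $\F$ are tensor products of the idempotents $x$ and $x+1$, not of ``$X\pm 1$'' (which mod $2$ is a single element and so does not yield $2^n$ classes); it is cleaner to simply cite Turner's theorem, as the paper does. Finally, in the paper's conventions the correction term raises the quantum grading by $2$ rather than lowering it; this is only a convention, but as written it is inconsistent with your own formula $1\mapsto 1\otimes 1$. With the correct algebra $\F[x]/(x^2+x)$ substituted, the rest of your verification goes through and agrees with the paper.
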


\noindent This yields an easy, alternative proof that Lee's spectral sequence is a link type invariant, from which it follows that Rasmussen's invariant $s_\F$ is as well.

Apart from their theoretical appeal, we expect our functoriality results to have applications to  computing  Floer theories and the maps on Floer homology induced by link cobordisms. 
Indeed, in the singular instanton and Heegaard Floer settings, one can show that the morphism of spectral sequences we assign to a cobordism is induced by a filtered chain map whose  induced map on total homology agrees with the cobordism map on Floer homology.  In the case of Kronheimer and Mrowka's construction, for example, this means that there is a commutative diagram
\begin{equation*}
\begin{gathered}
 \xymatrix@C=30pt@R=25pt{
H_*(C(D_0)) \ar[d]_{\cong} \ar[r]^-{(f_M)_*}  & H_*(C(D_1)) \ar[d]^{\cong} \\
I^\sharp(\bar L_0) \ar[r]_-{I^\sharp(-S)} & I^\sharp(\bar L_1).
}
\end{gathered}
\end{equation*}
Here, $C(D_i)$ is the  filtered complex associated to a diagram $D_i$ for a link $L_i$ which gives rise to Kronheimer and Mrowka's spectral sequence, and $f_M$ is the  filtered  chain map  associated to a movie  $M$ for the cobordism $S$ which induces the morphism of spectral sequences \[KM(S):KM(L_0)\to KM(L_1)\] in Theorem \ref{thm:sskm}.  
The third author and Zentner \cite{lobbzentner}  recently used the idea that diagrammatic 1-handle additions induce morphisms of spectral sequences to compute the  singular instanton and Heegaard Floer spectral sequences for a variety of knots, even without the assumption (proved in this paper) that the morphism associated to a movie for a cobordism is independent of the movie. The functoriality established here should allow us to extend these sorts of calculations to a wider array of knots.

Another concrete and important application of our framework has to do with  proving  topological invariance for  Floer-homological constructions. For example, Herald, Kirk and the second author  recently defined a Lagrangian Floer analogue of singular instanton knot homology, which they call \emph{pillowcase Floer homology}.  They do not know how to give a direct proof that their construction defines a knot invariant. They plan to bypass this difficulty by showing that pillowcase Floer homology is isomorphic to the $E_\infty$ page of the spectral sequence associated with a Khovanov-Floer theory, from which invariance will follow automatically.

In a slightly different direction, the results in this paper imply that any reasonably well-behaved deformation of the Khovanov chain complex gives rise to  link and cobordism invariants. That is, our construction gives a mechanism for constructing a wealth of new  invariants. To illustrate this principle,  we actually construct in Subsection \ref{subsec:new_knot_invariants} some new deformations of the Khovanov complex which are easily shown to define Khovanov-Floer theories.  At the moment, however, we do not know whether the resulting link and  cobordism invariants are  different from those in Khovanov homology.  A natural  (and probably very difficult) problem is to classify the link  invariants that  come from Khovanov-Floer theories.  


\subsection{Organization}

 In Section \ref{sec:bkgnd}, we collect some facts from homological algebra and review Khovanov homology and ideas involving functoriality. In Section \ref{sec:defs}, we give a precise definition of a Khovanov-Floer theory. In Section \ref{sec:proofs}, we prove our main result, Theorem \ref{thm:khfunct}. In Section \ref{sec:kfthys}, we show that the spectral sequence constructions of Kronheimer-Mrowka, Ozsv{\'a}th-Szab{\'o}, Szab{\'o}, and Lee constitute Khovanov-Floer theories, and we describe some new deformations of the Khovanov complex which also define Khovanov-Floer theories.

\subsection{Acknowledgements} It is our pleasure to thank Scott Carter and Ciprian Manolescu for helpful conversations.

\section{Background}
\label{sec:bkgnd}
\noindent We will work over $\F=\Z/2\Z$ throughout the entire paper unless otherwise specified.

\subsection{Homological algebra}
\label{ssec:homalg}In this subsection, we  record some  basic results about filtered chain complexes and their associated spectral sequences.

The filtered chain complexes considered in this paper are all   chain complexes over $\F=\Z/2\Z$, admitting a direct sum decomposition of the form \begin{equation}\label{eqn:filtcpx}(C=\bigoplus_{i\geq i_0}C^i,\,d=d^{0}+d^{1} + \dots),\end{equation}
where: 
\begin{itemize}
\item $d^i(C^j)\subset C^{j+i}$ for each $j\geq i_0$, and 
\item $C^i=\{0\}$ for all $i$ greater than some $i_1$.
\end{itemize}  We consider elements of $C^i$ to be homogeneous of \emph{grading} $i$.  This grading should not be confused with a (co)homological grading (i.e. a grading raised by one by $d$) which, while generally present, will be suppressed throughout the discussion.  The associated filtration  
\begin{equation}\label{eqn:filti1}C=\mathscr{F}^{i_0}\supset \mathscr{F}^{i_0+1}\supset \dots \supset \mathscr{F}^{i_1}=\{0\}\end{equation} is  given by \[\mathscr{F}^{i} = \bigoplus_{j\geq i}C^j.\] In fact, \emph{every} filtered complex over $\F$ (or any other field)  can be thought of in terms of a graded complex in which the differential does not decrease grading, as above. From this perspective, a \emph{filtered chain map of degree $k$} from $(C,d)$ to $(C',d')$ is a chain map $f:C\to C'$ admitting a splitting \begin{equation}\label{eqn:filtmap}f = f^k+f^{k+1} + f^{k+2}+  \dots \end{equation} such that $f^i(C^j)\subset (C')^{j+i}$. 

A {\em spectral sequence} is a sequence of chain complexes $\{(E_i,d_i)\}_{i\geq i_0}$ for some $i_0\geq 0$ satisfying \[E_{i+1}=H_*(E_i,d_i).\] A filtered complex $(C,d)$  gives rise to a spectral sequence \[\{(E_i(C),d_i(C))\}_{i\geq 0}\] of graded vector spaces via the standard \emph{exact couple} construction; see, e.g.  \cite[Section 14]{BottTu}. Note that each $E_i(C)$ inherits a grading from that of $C$. As usual, we  will write $E_i(C)=E_{\infty}(C)$ to mean that \[E_i(C) = E_{i+1}(C)=E_{i+2}(C)=\dots:=E_{\infty}(C).\] A \emph{morphism} from a spectral sequence $\{(E_i,d_i)\}_{i\geq i_0}$ to a spectral sequence  $\{(E'_i,d'_i)\}_{i\geq i'_0}$ is a sequence of chain maps   \[\{F_i:(E_i,d_i)\to (E'_i,d'_i)\}_{i\geq \max\{i_0,i'_0\}}\] satisfying $F_{i+1}=(F_i)_*$.
A   filtered chain map as in (\ref{eqn:filtmap}) gives rise to a morphism of spectral sequences   \[\{F_i=E_i(f):(E_i(C),d_i(C))\to (E_i(C'),d_i(C'))\}_{i\geq 0}\]  in a standard way as well.  If the filtered map is of degree $k$, then each map in the morphism is  homogenous of degree $k$ with respect to the grading. As mentioned in the introduction, spectral sequences and their morphisms form a category which we denote by $\spect$.

The  three lemmas below are the main results of  this subsection; we will make heavy use of them in Sections \ref{sec:defs} and \ref{sec:proofs}. 

\begin{lemma}
\label{lem:ssmapiso}
Suppose \[f:(C,d)\to (C',d')\] is a degree $0$ filtered chain map  such that $E_i(f)$ is an  isomorphism. Then $E_j(f)$ is an isomorphism for all $j\geq i$. Moreover, there exists a degree $0$ filtered chain map \[g:(C',d')\to (C,d)\]  such that $E_j(g) = E_j(f)^{-1}$ for all $j\geq i$.
\end{lemma}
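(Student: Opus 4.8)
The plan is to run the standard exact-couple bookkeeping and show that once $E_i(f)$ is an isomorphism, the five-lemma forces every later $E_j(f)$ to be one, and then to build the inverse filtered map $g$ by a termwise (in the filtration grading) construction, using that $f$ is injective and surjective on each associated graded piece of the filtration once the pages stabilize appropriately.

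First I would recall the exact-couple setup: the filtration \eqref{eqn:filti1} gives exact couples $(D_i,E_i)$ where $D_i$ is built from $H_*$ of the subcomplexes $\mathscr{F}^j$, and $f$ induces a map of exact couples. Since $E_i(f)$ is an isomorphism and $E_i(f)$ is a chain map with respect to $d_i$, passing to homology gives $E_{i+1}(f) = (E_i(f))_* $ is an isomorphism; by induction $E_j(f)$ is an isomorphism for all $j\geq i$. (One does need to be slightly careful that $E_{i+1} = H_*(E_i,d_i)$ as functors of $f$, but this is exactly the statement that a filtered chain map induces a morphism of spectral sequences, already recorded in the excerpt.) This takes care of the first sentence with essentially no work.

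The substantive part is producing the degree $0$ filtered chain map $g$ with $E_j(g) = E_j(f)^{-1}$ for $j \ge i$. The cleanest approach: since everything is over the field $\F$, the filtered complex $(C,d)$ splits as a graded vector space, and it suffices to produce, for each $n$, a vector-space splitting of $f$ compatible with the filtration and the differentials up to the relevant order. I would argue that $E_i(f)$ being an isomorphism implies, by a downward induction on the filtration level combined with the exact couple, that $f$ itself is a filtered quasi-isomorphism in a strong sense — in fact that $f\colon \mathscr F^j \to (\mathscr F')^j$ is a quasi-isomorphism for every $j$ once $i$ is large enough, and that the finitely-generated/bounded hypotheses ($C^i = \{0\}$ for $i>i_1$) let one bootstrap from $E_i$ back to $E_0$-level information. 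From there I would construct $g$ by choosing, level by level in the grading $i_0, i_0+1, \dots, i_1$, a linear section of $f$ that is a chain map modulo higher filtration, inductively correcting the error terms — a standard homological-perturbation / obstruction-killing argument, finite because the filtration is finite. Finally one checks $E_j(g) = E_j(f)^{-1}$: both $E_j(g)$ and $E_j(f)^{-1}$ are inverses to the isomorphism $E_j(f)$, so they agree, giving the "moreover" clause for free once $g$ is known to be a degree $0$ filtered chain map with $E_j(g)$ invertible.

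The main obstacle I anticipate is the construction of $g$ as an honest filtered \emph{chain} map (not merely a map inducing the inverse on some page): one must descend from "$E_i(f)$ is an isomorphism" all the way to control of $f$ at the level of the underlying complexes, and then inductively build the section so that all the error terms $g d' - d g$ vanish. The boundedness of the filtration is what makes this terminate, and the fact that we are over a field is what lets us choose the required splittings at each stage; I would expect to phrase this either as an explicit finite induction on filtration length or by citing a homological-perturbation-lemma-type statement. Everything else — the five-lemma step, and the identification $E_j(g)=E_j(f)^{-1}$ — is routine.
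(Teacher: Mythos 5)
Your proof of the first claim (that $E_j(f)$ is an isomorphism for all $j\ge i$, by induction using $E_{j+1}(f)=(E_j(f))_*$) is fine, and in fact is even more direct than the paper, which derives this as a by-product of the construction of $g$.

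The construction of $g$, however, has a genuine gap, and the route you sketch is not the paper's. You propose to ``bootstrap from $E_i$ back to $E_0$-level information'' and then choose ``a linear section of $f$'' level by level. Neither step is available. First, information does not flow backward in the spectral sequence: $E_i(f)$ being an isomorphism tells you nothing about $E_j(f)$ for $j<i$, nor about $f$ itself at the chain or associated-graded level. For a concrete failure, take $C=\F\langle a,b\rangle$ with $a\in C^0$, $b\in C^1$, $d(a)=b$, and $C'=0$, with $f=0$. Then $E_1(C)=\F^2$, $E_2(C)=0$, so $E_2(f):0\to 0$ is an isomorphism, but $f$ is the zero map: it is neither injective nor surjective, has no linear section, and $E_0(f),E_1(f)$ are not isomorphisms. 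So the proposed ``section of $f$ that is a chain map modulo higher filtration'' simply does not exist in general, and no finite obstruction-killing induction on the filtration can manufacture one.

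The paper's argument sidesteps this entirely by using the cancellation machinery set up in Subsection \ref{ssec:homalg}: after canceling all differentials of degree $<i$ one obtains reduced filtered complexes $(C_{(i)},d_{(i)})$ and $(C'_{(i)},d'_{(i)})$ together with an adjusted degree-$0$ filtered chain map $f_{(i)}$ whose grading-preserving part is exactly $E_i(f)$. Because $f_{(i)}$ is ``lower triangular'' with an isomorphism on the diagonal, $f_{(i)}$ is itself a vector-space isomorphism, and its inverse $g_{(i)}=f_{(i)}^{-1}$ is automatically a degree-$0$ filtered chain map. One then sets $g=\iota_{(i)}\circ g_{(i)}\circ\pi_{(i)}$, where $\pi_{(i)}$ and $\iota_{(i)}$ are the projection and inclusion from Remark \ref{rmk:cancel}, which induce the identity on $E_j$ for $j\ge i$. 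The key point your sketch misses is that the inversion happens on the \emph{cancelled} complex $C_{(i)}$, not on $C$; on $C_{(i)}$ the map really is bijective, whereas on $C$ it need not be. You should replace the bootstrap/section step with this cancellation argument, or else find a correct formulation of the homotopy-inverse construction that does not presuppose injectivity of $f$.
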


\begin{lemma}
\label{lem:sseqmaps}
Suppose \[f,g:(C,d)\to (C',d')\] are degree $k$  filtered chain  maps such that $E_i(f) = E_i(g)$. Then $E_j(f) = E_j(g)$ for all $j\geq i$.
\end{lemma}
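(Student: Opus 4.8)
The plan is to reduce the statement to a comparison of maps on the $E_i$ page and then apply the standard exact-couple formalism. Recall that for a filtered chain map $f = f^k + f^{k+1} + \dots$, the induced morphism of spectral sequences $\{E_i(f)\}$ is built from the exact couple associated to the filtration: on $E_i$ one has a surjection from (a subquotient of) cycles in $C$, and $E_i(f)$ is the map induced on this subquotient by $f$ itself. So the first step is to record the key observation that \emph{everything past the $E_i$ page is determined by the induced map $E_i(f)$ on the $i$th page alone}: once we know $E_i(f) = E_i(g)$ as a chain map $(E_i(C),d_i(C)) \to (E_i(C'), d_i(C'))$, then $E_{i+1}(f) = (E_i(f))_* = (E_i(g))_* = E_{i+1}(g)$, and likewise for all higher pages by induction on $j$.

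The only content, then, is the fact that the morphism of spectral sequences induced by a filtered chain map satisfies the compatibility $E_{j+1}(f) = (E_j(f))_*$ — that is, that the morphism $\{E_i(f)\}$ genuinely is a morphism in $\spect$ in the sense defined above. First I would make sure this is available: it is exactly the functoriality of the exact-couple construction, which is stated earlier in this subsection (``A filtered chain map as in (\ref{eqn:filtmap}) gives rise to a morphism of spectral sequences $\{F_i = E_i(f)\}$ in a standard way''), and is also referenced via \cite[Section 14]{BottTu}. Granting that, the inductive step is immediate: suppose $E_j(f) = E_j(g)$; applying the functor $H_*(-)$ to this equality of chain maps $(E_j(C), d_j(C)) \to (E_j(C'), d_j(C'))$ gives $(E_j(f))_* = (E_j(g))_*$ on homology, and since $E_{j+1}(f) = (E_j(f))_*$ and $E_{j+1}(g) = (E_j(g))_*$, we conclude $E_{j+1}(f) = E_{j+1}(g)$. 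The base case $j = i$ is the hypothesis.

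There is essentially no serious obstacle here — the lemma is a formal consequence of the way morphisms of spectral sequences are defined, combined with the fact that $E_i(f)$ is the map on the $i$th page induced by $f$. The one point that requires a word of care is the degree bookkeeping: one should note that if $f$ and $g$ are filtered of degree $k$, then $E_i(f)$ and $E_i(g)$ are homogeneous of degree $k$ with respect to the grading each page inherits, so the comparison $E_i(f) = E_i(g)$ and all subsequent comparisons take place between maps of the same degree; but this is automatic and was already observed in the discussion preceding the lemma. I would phrase the write-up as a two-line induction, citing the compatibility $E_{j+1}(\cdot) = (E_j(\cdot))_*$ of the induced morphism as the only input.
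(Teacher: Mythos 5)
Your proof is correct, and the underlying reason it works is the same as in the paper, but the route differs slightly. You run a direct induction on the page index $j$, using the compatibility $E_{j+1}(f)=(E_j(f))_*$ — i.e., that $\{E_i(f)\}$ is an honest morphism in $\spect$ — which you correctly source from the exact-couple functoriality asserted earlier in the subsection. The paper instead develops a cancellation framework (Lemma~\ref{lem:cancel} and the surrounding discussion of adjusted maps $f_{(i)}$) and then argues by linearity: it observes that if a filtered chain map induces zero on some page it induces zero on all subsequent pages, applies this to $h=f-g$, and concludes from $E_i(f-g)=E_i(f)-E_i(g)=0$. The two arguments are formally equivalent — the paper's ``zero implies zero'' observation is exactly the degenerate case of your inductive step — but they draw on different portions of the ambient setup: yours leans on the definition of morphism in $\spect$, the paper's on the cancellation picture it has just built. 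The $f-g$ trick is a small streamlining that lets the paper avoid an explicit induction; your version is slightly more verbose but has the virtue of not requiring the linearity of $E_i(\cdot)$ in the map, which the paper uses without comment. Your remark on the degree-$k$ bookkeeping is also accurate and matches the paper's convention.
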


\begin{lemma}
\label{lem:einfty}
Suppose $E_i(C) = E_{\infty}(C)$. Then there exists a degree $0$ filtered chain map \[f:(C,d)\to (E_i(C),0)\] from $(C,d)$ to the complex consisting of the  vector space $E_i(C)$ with trivial differential such that the induced map \[E_i(f):E_i(C)\to E_i(C)\] is the identity map.
\end{lemma}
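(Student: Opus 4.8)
The plan is to build $f$ directly from a homogeneous basis of $E_i(C)$, using the hypothesis $E_i(C)=E_\infty(C)$ to guarantee that basis elements lift to honest cycles of the correct filtration level. Since the filtration of $C$ is bounded, there is the standard identification of $E_\infty(C)$ with the associated graded $\bigoplus_p F^pH/F^{p+1}H$ of $H:=H_*(C,d)$, where $F^pH$ denotes the image of $H_*(\mathscr F^p)$ in $H$. First I would fix a basis $e_1,\dots,e_n$ of $E_i(C)$ with $e_\alpha$ homogeneous of grading $p_\alpha$, and, using $E_i^{p_\alpha}(C)=E_\infty^{p_\alpha}(C)=F^{p_\alpha}H/F^{p_\alpha+1}H$, choose a cycle $z_\alpha\in\mathscr F^{p_\alpha}$ representing $e_\alpha$. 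The goal is then to define $f$ so that $f(z_\alpha)=e_\alpha$, $f$ annihilates $\operatorname{im}d$, and $f$ is a filtered chain map of degree $0$ into $(E_i(C),0)$, and to check that $E_i(f)$ is the identity.

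The first substantive task is the linear algebra that makes this construction well-defined. The classes $[z_\alpha]\in H$ form a basis of $H$ adapted to the filtration --- meaning $\{[z_\alpha]:p_\alpha\ge p\}$ is a basis of $F^pH$ for every $p$ --- by the usual argument for lifting a basis through a bounded filtration. It follows that the $z_\alpha$ are linearly independent, that $Z:=\operatorname{span}\{z_\alpha\}$ maps isomorphically onto $H$ under $z\mapsto[z]$, and hence that $\ker d=Z\oplus\operatorname{im}d$. The crucial refinement --- and the step I expect to require the most care --- is that this splitting is compatible with the filtration: $\ker d\cap\mathscr F^p=(Z\cap\mathscr F^p)\oplus(\operatorname{im}d\cap\mathscr F^p)$ with $Z\cap\mathscr F^p=\operatorname{span}\{z_\alpha:p_\alpha\ge p\}$, which again follows from the adaptedness of $\{[z_\alpha]\}$. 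Granting this, the map $\ker d\to(E_i(C),0)$ sending $z_\alpha\mapsto e_\alpha$ and killing $\operatorname{im}d$ is a filtered chain map of degree $0$. Because we work over a field, a filtered degree-$0$ map out of a subspace (with its induced filtration) extends to a filtered degree-$0$ map on all of $C$ --- extend a filtration-adapted basis of $\ker d$ to one of $C$ and send the new basis vectors to $0$ --- and the resulting $f\colon C\to(E_i(C),0)$ still vanishes on $\operatorname{im}d$, so it remains a chain map.

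Finally I would verify $E_i(f)=\operatorname{id}_{E_i(C)}$ via the representative description of the induced map on pages. Given a class in $E_i^p(C)$, the equality $E_i(C)=E_\infty(C)$ lets us pick a representing cycle $x\in\mathscr F^p$; then the induced map carries this class to the grading-$p$ component of $f(x)$. Expanding $x$ according to $\ker d=Z\oplus\operatorname{im}d$ as $x=\sum_{p_\alpha\ge p}c_\alpha z_\alpha+(\text{a boundary})$ gives $f(x)=\sum_{p_\alpha\ge p}c_\alpha e_\alpha$, whose grading-$p$ part is $\sum_{p_\alpha=p}c_\alpha e_\alpha$; on the other side the original class is that of $\sum_{p_\alpha\ge p}c_\alpha[z_\alpha]$ in $F^pH/F^{p+1}H$, which is the same element. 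Hence $E_i^p(f)$ is the identity for every $p$, as required.

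One could reach the same conclusion more quickly by invoking the structure theorem for bounded filtered complexes over a field, which expresses such a complex as a direct sum of acyclic two-term summands and summands with vanishing differential, and taking $f$ to be the projection onto the latter; the argument above is essentially an unwinding of that, carried out using only the machinery already set up here.
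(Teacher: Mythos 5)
Your proof is correct, but it takes a genuinely different route from the paper's. The paper leans on the cancellation formalism already set up in this section: because the filtration is bounded and $E_i(C)=E_\infty(C)$, the iterated-cancellation complex $(C_{(i)},d_{(i)})$ has vanishing differential and so literally equals $(E_i(C),0)$; the composite projection $\pi_{(i)}$ from Remark~\ref{rmk:cancel} is then a degree-$0$ filtered chain map inducing the identity on $E_j$ for all $j\ge i$, and one is done in a line. Your argument instead builds $f$ from scratch: fix a homogeneous basis of $E_i(C)$, use the degeneration hypothesis to lift each basis vector to a cycle at the correct filtration level, establish the filtration-compatible splitting $\ker d\cap\mathscr{F}^p=(Z\cap\mathscr{F}^p)\oplus(\operatorname{im}d\cap\mathscr{F}^p)$ via the usual adapted-basis argument for $F^pH$, define $f$ on $\ker d$ and extend by zero over the field, then verify $E_i(f)=\operatorname{id}$ through the identification of $E_\infty$ with the associated graded of $H$. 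This is more elementary in the sense of avoiding the $\pi_{(i)},\iota_{(i)}$ machinery entirely, at the cost of being considerably longer and requiring the careful filtered linear algebra you flag; that part, in particular the claim that $Z\cap\mathscr F^p=\operatorname{span}\{z_\alpha:p_\alpha\ge p\}$ and the final computation of $E_i(f)$ on representatives, is handled correctly. Your closing observation is also on target: the "structure theorem" you mention is exactly what Lemma~\ref{lem:cancel} packages operationally, and the paper's $\pi_{(i)}$ is the projection onto the zero-differential summand, so the two arguments are equivalent in spirit---yours unrolls the cancellation procedure into explicit choices of cycle representatives and adapted bases.
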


The remainder of this section is devoted to proving these lemmas (even though they  are well-known to experts).
We  will do so using a procedure called \emph{cancellation} which provides  a   concrete way of understanding these spectral sequences and the maps between them. We  first describe this procedure for ordinary (unfiltered) chain complexes, as part of the well-known \emph{cancellation lemma} below.

\begin{lemma}[Cancellation Lemma]
\label{lem:cancel}
Suppose  $(C,d)$ is a chain complex over $\F$ freely generated by elements $\{x_i\}$ and let $d(x_i,x_j)$ be the coefficient of $x_j$ in $d(x_i)$. If $d(x_k,x_l)=1,$ then the complex $(C',d')$ with generators $\{x_i| i \neq k,l\}$ and differential $$d'(x_i) = d(x_i) + d(x_i,x_l)d(x_k)$$ is chain homotopy equivalent to $(C,d)$ via  the chain homotopy equivalences \[\pi:C\rightarrow C'\,\,\,\text{ and }\,\,\,\iota: C'\rightarrow C\]  given by \[\pi = P\circ(id + d\circ h)\,\,\,\text{ and }\,\,\,\iota = (id+h\circ d)\circ I,\] where $P$ and $I$ are the  natural projection and inclusion maps and $h$ is the linear map defined by 
\[
h(x_l)=x_k\,\,\,\text{ and }\,\,\,
h(x_i)=0\text{ for }i\neq l.
\]
We say that the complex $(C',d')$  is obtained from $(C,d)$ by \emph{canceling} the component of  $d$ from $x_k$ to $x_l$.
\end{lemma}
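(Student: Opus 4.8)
The plan is to verify directly that the stated maps $\pi$ and $\iota$ are mutually inverse chain homotopy equivalences, using the elementary identities satisfied by the homotopy operator $h$. First I would record the basic algebraic facts about $h$: since $h(x_l)=x_k$ and $h$ vanishes on all other generators, we have $h^2=0$, and one computes $h d h = h$ on the span of $x_l$ (because $d(x_k)$ contributes $x_l$ with coefficient $d(x_k,x_l)$, but more relevantly $hd(x_k)$ picks out the $x_l$-coefficient of $d(x_k)$ which need not vanish — so this identity requires care). The cleaner route is to introduce the ``Gaussian elimination'' viewpoint: decompose $C = \langle x_k\rangle \oplus \langle x_l\rangle \oplus C'$ as a vector space, write $d$ in block form with respect to this decomposition, note that the component $\langle x_k\rangle \to \langle x_l\rangle$ is the identity (after rescaling, which is automatic over $\F$ since the coefficient is $1$), and then perform the standard change of basis that kills the off-diagonal blocks in that row and column. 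The surviving differential on $C'$ is exactly $d'(x_i) = d(x_i) + d(x_i,x_l)\,d(x_k)$, where the correction term accounts for the ``zig-zag'' through the cancelled pair.

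The key steps, in order, are: (1) check that $d'$ as defined squares to zero — this follows from $d^2=0$ by a short computation, projecting the identity $d^2(x_i)=0$ onto $C'$ and using that the $x_k$-component of $d^2(x_i)$ vanishes; (2) check that $\pi$ and $\iota$ are chain maps, i.e. $\pi d = d' \pi$ and $\iota d' = d \iota$, which one expands using the definitions $\pi = P\circ(\id+d\circ h)$ and $\iota = (\id + h\circ d)\circ I$ together with $d^2=0$; (3) check $\pi \circ \iota = \id_{C'}$, which is immediate because $P \circ h = 0$, $P \circ d \circ h \circ h \circ d = 0$ (as $h^2=0$), and the cross terms collapse after applying $P$; and (4) exhibit the chain homotopy $\id_C - \iota\circ\pi \simeq dh + hd$, i.e. verify that $\id_C - \iota\circ\pi = dh+hd$ on the nose. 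Step (4) is the heart of the matter: one expands $\iota\circ\pi = (\id+hd)\circ I\circ P\circ(\id+dh)$, uses $I\circ P = \id - (\text{projection onto }\langle x_k,x_l\rangle)$, and tracks how the operator $h$ converts the projection onto $\langle x_k\rangle$ and $\langle x_l\rangle$ into the boundary term $dh+hd$.

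The main obstacle I anticipate is bookkeeping in step (4): getting the combination $dh+hd$ to emerge exactly, rather than up to a further homotopy, requires being careful that $h$ is a strict one-sided inverse to the off-diagonal block of $d$ and that all the ``length $\geq 2$'' zig-zag terms either vanish (by $h^2=0$) or are precisely what appears in $d'$. Over $\F=\Z/2\Z$ this is somewhat more forgiving since there are no signs, but one must still confirm that the coefficient-$1$ hypothesis $d(x_k,x_l)=1$ is used in exactly the right place — namely to ensure the block $\langle x_k\rangle\to\langle x_l\rangle$ is invertible so that the Gaussian elimination is legitimate. Once step (4) is in hand, the lemma follows, and the explicit formulas for $\pi$, $\iota$, and $h$ are exactly what we will later need to track gradings when we upgrade this to the filtered setting in the proofs of Lemmas \ref{lem:ssmapiso}, \ref{lem:sseqmaps}, and \ref{lem:einfty}.
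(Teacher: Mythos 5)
The paper does not actually prove this lemma --- it is stated as the ``well-known cancellation lemma'' (Gaussian elimination) and used as a black box --- so there is nothing to compare against except the standard argument, and your proposal is exactly that standard argument, carried out correctly. Your steps (1)--(4) are the right checks, and the only genuinely non-trivial verification is the identity $\id_C+\iota\circ\pi=d\circ h+h\circ d$ evaluated on $x_l$ (on $x_i$, $i\neq k,l$, it is immediate, and on $x_k$ it is where the hypothesis $d(x_k,x_l)=1$ enters); there the needed cancellation is supplied by reading off the coefficient of $x_l$ in $d^2(x_k)=0$, which is presumably what you mean by the ``zig-zag'' bookkeeping. Two small remarks: your hesitation in the first paragraph about $h\circ d\circ h=h$ is unnecessary --- it holds on the nose, precisely because $d(x_k,x_l)=1$, so $hdh(x_l)=h(d(x_k))=x_k=h(x_l)$ --- and in step (3) the vanishing of the cross terms uses both $P\circ h=0$ and $h\circ I=0$ (the latter kills $P\circ d\circ h\circ I$), not $P$ alone; neither point affects the correctness of the outline. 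Also note that, as you observe in step (1), the displayed formula for $d'$ is implicitly post-composed with the projection $P$ (the $x_l$-component of $d(x_i)+d(x_i,x_l)d(x_k)$ cancels automatically, but the $x_k$-component must be discarded unless one assumes $d$ strictly raises a homological grading).
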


 \begin{remark}
 \label{rmk:cancelhomology}
 The homology $H_*(C,d)$ of the complex in Lemma \ref{lem:cancel} can be understood as the vector space obtained by  performing cancellation until the resulting differential is zero. Technically, the actual vector space resulting from this cancellation depends on the order of cancellations, but any such vector space is canonically isomorphic to $H_*(C,d)$.
 \end{remark}

Suppose now that $(C,d)$ is a filtered chain complex as in (\ref{eqn:filtcpx}). One may think of the   sequence $\{E_i(C)\}_{i\geq 0}$ as the sequence of graded vector spaces obtained by performing cancellation in  stages, where  the $i$th \emph{page}  records the result of this cancellation after the $i$th stage. Specifically, let:
\begin{itemize}
\item $(C_{(0)},d_{(0)}) = (C,d)$, and inductively let
\item $(C_{(i)},d_{(i)})$ be the complex obtained from $(C_{(i-1)},d_{(i-1)})$ by canceling the components of $d_{(i-1)}$ which shift the grading by $i-1$.
\end{itemize} Then $E_i(C)$ may be thought of as   the graded vector space $C_{(i)}$, with grading naturally inherited from $C$. Under this formulation, the  spectral sequence differential $d_k(C)$ on $E_k(C)$ is the sum of the components of $d_{(k)}$ which shift the grading by exactly $k$, so that the recursive condition above  may be interpreted as the more familiar  \[E_i(C) = H_*(E_{i-1}(C),d_{i-1}(C)),\] per Remark \ref{rmk:cancelhomology}.

Suppose that $f$ is a filtered chain map of degree $k$  as in (\ref{eqn:filtmap}). Cancellation  provides a nice way of understanding the induced maps 
\[E_i(f):E_i(C)\to E_i(C')\] for each $i\geq 0$. Specifically, every time we cancel a component of $d$ or $d'$, we may adjust the components of $f$ as though they were components of a differential (they \emph{are} components of the mapping cone differential). In this way, we obtain an \emph{adjusted map} \[f_{(i)}:(C_{(i)}, d_{(i)})\to (C'_{(i)}, d'_{(i)})\] for each $i\geq 0$. The induced map $E_i(f)$  may then be understood as  the sum of the components of $f_{(i)}$ which shift the grading by exactly $k$. Note that if \[f:(C,d)\to(C',d')\,\,\,\text{ and }\,\,\,g:(C',d')\to(C'',d'')\] are filtered chain maps of degrees $j$ and $k$, respectively, then $g\circ f$ is naturally a degree $j+k$ filtered chain map, and \[E_i(g\circ f)=E_i(g)\circ E_i(f)\] for all $i\geq 0$.

\begin{remark}
A degree $k$ filtered chain map $f$ can also be thought of as a degree $j$   map for any $j\leq k$. On the other hand,  the definition of $E_i(f)$ depends on the degree of $f$. It is therefore important  that one specifies the degree of $f$ when talking about these induced maps.
\end{remark}

\begin{remark}
Given a degree $k$ filtered chain map $f$ from $(C,d)$ to $(C',d')$, it is worth pointing out that \[E_{\infty}(f):E_{\infty}(C)\to E_{\infty}(C')\] does not necessarily agree with the the induced map \[f_*:H_*(C,d)\to H_*(C',d'),\] via the isomorphisms between the domains and codomains. In fact, it can be the case that $f_*$ is an isomorphism while $E_{\infty}(f)$ is the zero map e.g. regard the identity map as a degree $-1$ filtered chain map.    What is true, however, is that 
\[ f_*= E_\infty(f)+ \text{higher order terms}\]
where ``higher order terms" means terms  in the decomposition of the adjusted map $f_{(\infty)}=f_*$ according to the grading that shift the grading by more than $k$.

\end{remark}

\begin{remark}
\label{rmk:cancel}
Note that for each cancellation performed in computing the spectral sequence associated to a filtered complex $(C,d)$, the maps $\pi$ and $\iota$ of Lemma \ref{lem:cancel} are degree 0 filtered chain maps. In particular, by taking compositions of these maps, we obtain degree $0$ filtered chain maps \[\pi_{(i)}: (C,d)\to (C_{(i)},d_{(i)})\,\,\,\text{ and }\,\,\,\iota_{(i)}: (C_{(i)},d_{(i)})\to (C,d)\] for each $i\geq 0$.  Tautologically, we  have that the induced maps 
\begin{align*}
E_j(\pi_{(i)})&: E_j(C)\to [E_j(C_{(i)})=E_j(C)]\\
E_j(\iota_{(i)})&: [E_j(C_{(i)})=E_j(C)]\to E_j(C)
\end{align*} are the identity maps for all $j\geq i$.  
\end{remark}

Below, we  prove Lemmas \ref{lem:ssmapiso}, \ref{lem:sseqmaps}, and \ref{lem:einfty} using the above descriptions of spectral sequences and induced maps in terms of cancellation.

\begin{proof}[Proof of Lemma \ref{lem:ssmapiso}] Suppose $f$ is a map as in the lemma and let \[f_{(i)}:(C_{(i)},d_{(i)})\to (C_{(i)}',d'_{(i)})\] be the adjusted map  as defined  above. The fact that $E_i(f)$ is an isomorphism  implies that $f_{(i)}$ is too. Moreover, it is easy to see that its inverse \[g_{(i)}=f_{(i)}^{-1}:(C'_{(i)},d'_{(i)})\to (C_{(i)},d_{(i)})\] is  also a filtered chain map of degree $0$, and that $E_j(f_{(i)})$ and $E_j(g_{(i)})$ are inverses for all $j\geq i$. Let \[g:(C',d')\to(C,d)\]  be the degree $0$ filtered chain map given by $g=\iota_{(i)}\circ g_{(i)}\circ\pi_{(i)}$ for maps \[\pi_{(i)}: (C',d')\to (C'_{(i)},d'_{(i)})\,\,\,\text{ and }\,\,\,\iota_{(i)}: (C_{(i)},d_{(i)})\to (C,d)\] as in Remark \ref{rmk:cancel}. Then   $E_j(f)=E_j(f_{(i)})$ and $E_j(g) = E_j(g_{(i)})$ are inverses for all $j\geq i$. In particular, each $E_j(f)$ is an isomorphism.
\end{proof}

\begin{proof}[Proof of Lemma \ref{lem:sseqmaps}] It is clear from the discussion above  that if a filtered chain map induces the zero map on some page  then it induces the zero map on all subsequent pages. Now suppose $E_i(f) =E_i(g)$ as in the lemma. Then \[E_i(f-g) = E_i(f)-E_i(g)=0,\] which implies that \[E_j(f)-E_j(g)=E_j(f-g) = 0\] for all $j\geq i$, completing the proof.
\end{proof}

\begin{proof}[Proof of Lemma \ref{lem:einfty}]
Note that $(E_i(C),0) = (C_{(i)},d_{(i)})$ in this case. We may therefore take $f$ to be the map \[f=\pi_{(i)}:(C,d)\to(C_{(i)},d_{(i)}), \]  per  Remark \ref{rmk:cancel}.
\end{proof}

\subsection{Khovanov homology}
\label{ssec:kh} In this subsection, we review the definitions and some basic properties of Khovanov homology and its reduced variant. 

Suppose $D$ is a  diagram in $S^2:=\R^2\cup\{\infty\}$ for an oriented link in $S^3:=\R^3\cup\{\infty\}$, with crossings labeled  $1,\dots,n$. Let $n_+$ and $n_-$ denote the numbers of positive and negative crossings of $D$. For  each  $I\in \{0,1\}^n$,  let $I_j$ denote the $j$th coordinate of $I$ and let $D_I$ be the  diagram  obtained by taking the $I_j$-resolution (as shown in  Figure \ref{fig:res}) of the $j$th crossing of $D$, for every $j\in\{1,\dots,n\}$. Let $V(D_I)$ be the vector space generated by the components of $D_I$. We endow $\Lambda^*V(D_I)$ with a grading $\mathbf{p}$ according to the rules  that $1\in \Lambda^0 V(D_I)$ has grading $\mathbf{p}(1)=m$, where $m$ is equal to the number of components of $D_I$, and that wedging with any of the components decreases the $\mathbf{p}$ grading by $2$.
 
 \begin{figure}[!htbp]
 \labellist 
\small\hair 2pt  
\pinlabel $0$ at 101 -6
\pinlabel $1$ at 173 -6 
\endlabellist 
\begin{center}
\includegraphics[width=6.6cm]{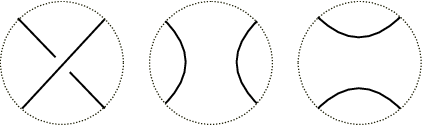}
\caption{\quad The $0$- and $1$-resolutions of a crossing.}
\label{fig:res}
\end{center}
\end{figure}

Given  tuples $I,J\in\{0,1\}^n$, we  write $I<_k J$ if $J$ may be obtained from $I$ by changing exactly $k$ $0$s to $k$ $1$s. For each  pair $I,I'$ with $I<_1 I'$,  one defines a map \[d_{I,I'}:\Lambda^*V(D_I)\to \Lambda^*V(D_{I'}),\] as described below. The Khovanov chain complex assigned to $D$ is then given by \[\cKh(D) = \bigoplus_{I\in\{0,1\}^n}\Lambda^*V(D_I),\] with differential \[d=\bigoplus_{I<_1I'}d_{I,I'}.\] This is a bigraded  complex,  with (co-)\emph{homological grading}  defined  by \[\mathbf{h}(x) = I_1+\dots +I_n-n_-,\]  for   $x\in \Lambda^*V(D_I)$, and  \emph{quantum grading} defined by \[\mathbf{q}(x) = \mathbf{p}(x) + \mathbf{h}(x) + n_+-n_-,\] for homogeneous   $x\in \Lambda^*V(D_I)$.   The differential $d$ increases  $\mathbf{h}$ by one and preserves $\mathbf{q}$. Thus, if we write $\cKh^{i,j}(D)$ for the summand of $\cKh(D)$ in  homological grading $i$ and quantum grading $j$, then $d$ restricts to a differential on \[\cKh^{*,j}(D) = \bigoplus_i\cKh^{i,j}(D)\] for each $j$. We will write \[\Kh^{i,j}(D)=H_i(\cKh^{*,j}(D),d)\] for the (co-)homology of this complex in homological grading $i$. The \emph{Khovanov homology of $D$}  refers to the bigraded vector space \[\Kh(D) = \bigoplus_{i,j}\Kh^{i,j}(D).\]

\begin{remark}
We will also treat the case in which $D$ is the \emph{empty} diagram. In this case, we let $\Kh(D) = \cKh(D)=\Lambda^*(0)= \F$.
\end{remark}

It remains to define  $d_{I,I'}$. Note that   the diagram $D_{I'}$ is obtained from $D_I$ either by merging two circles into one or by splitting one circle into two. Suppose first that $D_{I'}$ is obtained  by merging the components $x$ and $y$ of $D_I$ into one circle. Then there is an obvious  identification \[V(D_{I'})\cong V(D_I)/(x+y),\] and we define the \emph{merge} map $d_{I,I'}$ to be the induced quotient map  \[\Lambda^*V(D_I)\to \Lambda^*(V(D_I)/(x+y))\cong \Lambda^*V(D_{I'}).\] Suppose next that $D_{I'}$ is obtained  by splitting a component of $D_I$ into  two circles $x$ and $y$. Then the  identification \[V(D_I)\cong V(D_{I'})/(x+y)\] induces an identification \[\Lambda^*V(D_I)\cong \Lambda^*(V(D_{I'})/(x+y))\cong (x+y)\wedge \Lambda^*V(D_{I'}),\] and we define the \emph{split} map $d_{I,I'}$ to be the composition of the maps \[\Lambda^*V(D_I)\xrightarrow{\cong} \Lambda^*(V(D_{I'})/(x+y))\xrightarrow{\cong}(x+y)\wedge \Lambda^*V(D_{I'})\xrightarrow{\subset} \Lambda^*V(D_{I'}).\] That is, the split map may be thought of   as given  by wedging with $x+y$.


For diagrams $D$ and $D'$ which differ by a Reidemeister move, Khovanov defines in \cite{kh1} an isomorphism \[\Kh(D)\to\Kh(D'),\] which we  refer to   as the \emph{standard} isomorphism associated to the Reidemeister move. In this way, the isomorphism class of Khovanov homology provides an invariant of oriented link type.

Next, we describe how the theory behaves under disjoint union. Consider the link diagram $D\sqcup D'$ obtained as a disjoint union of diagrams $D$ and $D'$. Suppose $D$ has $m$ crossings and $D'$ has $n$ crossings. For $I\in\{0,1\}^m$ and $I'\in\{0,1\}^n$,  let $II'\in\{0,1\}^{m+n}$ denote the tuple formed via concatenation.  Note that for every such $II'$, there is a canonical  isomorphism \[V((D\sqcup D')_{II'})\to V(D_I)\oplus V(D_{I'}),\] which naturally induces an isomorphism \[\Lambda^*V((D\sqcup D')_{II'})\to \Lambda^*V(D_I)\otimes\Lambda^* V(D_{I'}).\] The direct sum of these  isomorphisms define an isomorphism \[\CKh(D\sqcup D)\to \CKh(D)\otimes \CKh(D'),\] that induces an isomorphism  \[\Kh(D\sqcup D)\to \Kh(D)\otimes \Kh(D'),\] which we  refer to as the \emph{standard} isomorphism associated to disjoint union.

In \emph{reduced} Khovanov homology, one considers \emph{based}  diagrams. These are planar diagrams containing the basepoint $\infty\subset S^2$ (in particular,  all such diagrams are nonempty). Suppose $D$ is such  a based diagram. Consider the chain map \[\Phi_\infty:\cKh(D)\to\cKh(D)\] given on each $V(D_I)$ by wedging with the component of $D_I$ containing  $\infty$. The image of this map is a subcomplex of $\cKh(D)$. The reduced Khovanov complex of $D$ is defined to be  the associated quotient complex, \begin{equation}\label{eqn:reducedkh}\cKhr(D):=(\cKh(D)/{\mathrm{Im}}(\Phi_\infty))[0,-1].\end{equation} The reduced Khovanov homology \[\Khr(D) = H_*(\cKhr(D))\]  is then  the bigraded vector space obtained as  the homology of this quotient complex. In (\ref{eqn:reducedkh}), the bracketed term $[0,-1]$ indicates  a shift of the $(i,j)$ bigrading by $(0,-1)$. This  shift is introduced so that the reduced Khovanov homology of the unknot is supported in bigrading $(0,0)$. 

In reduced Khovanov homology, Reidemeister moves  away from  $\infty$ give rise to isomorphisms of Khovanov groups. In particular,  the isomorphism class of reduced Khovanov homology provides an invariant of \emph{based}, oriented link type. 

Reduced Khovanov homology behaves under disjoint union a little bit differently than Khovanov homology does. In particular, suppose $D$ and $D'$ are disjoint planar diagrams, with $D$ containing $\infty$. Let $U_{\infty}$ denote the small crossingless diagram of the unknot containing $\infty$. Then there is a natural and obvious isomorphism  \[\Khr(D\sqcup D')\to\Khr(D)\otimes\Khr(D' \sqcup U_{\infty}).\] 

\begin{remark}
\label{rmk:khkhr} Note that there is a natural isomorphism between $\Khr(D\sqcup U_{\infty})$ and $\Kh(D)$ for planar diagrams $D$ avoiding $\infty$. \end{remark}

\subsection{Functoriality}
\label{ssec:funct}

In this subsection, we  review some categorical aspects of links, cobordisms, and their diagrams. We then describe how Khovanov homology defines a functor from various cobordism categories to $\vect$.

The  category we will be most interested in is the \emph{link cobordism category} $\link$. Objects of $\link$ are oriented links in $S^3:=\R^3\cup\{\infty\}$ and morphisms are proper isotopy classes of collared, smoothly embedded link cobordisms in $S^3\times[0,1]$. This means that two surfaces represent the same morphism if they differ by smooth isotopy fixing a neighborhood of the boundary pointwise. In order to define a functor from $\link$, one often starts by defining a functor from the \emph{diagrammatic link cobordism category} $\diag$ mentioned in the introduction. This category can be thought of as  a more combinatorial model for $\link$. We define this category below and then describe how functors from $\diag$ can be turned into functors from $\link$, focusing on the case of Khovanov homology.

Objects of $\diag$ are oriented link diagrams in $S^2:=\R^2\cup\{\infty\}$ and morphisms are \emph{movies} up to \emph{equivalence}. We define these two terms below. A \emph{movie} is a 1-parameter family $D_t$, $t\in [0,1]$, where the $D_t$ are link diagrams except at finitely many $t$-values where the topology of the diagram changes by a local move consisting of a Reidemeister move or a Morse modification (a diagrammatic handle attachment). Away from these exceptional $t$-values, the link diagrams  vary by  planar isotopy. Movies $M_1$ and $M_2$ can be composed in a natural way $M_2\circ M_1$, assuming that the initial diagram of $M_2$ agrees with the terminal diagram of $M_1$. Then  any movie can be described as a finite composition of \emph{elementary movies}, where each elementary movie corresponds to either:
\begin{itemize}
\item a Reidemeister move (of type I, II, or III), or
\item an oriented diagrammatic handle attachment (a $0$-, $1$-, or $2$-handle), or
\item a  planar isotopy of diagrams.
\end{itemize}
Carter and Saito \cite{CS} refer to the first two types of elementary movies as {\em elementary string interactions} (ESIs).  We will generally  represent an ESI diagrammatically by recording  diagrams just before and just after the corresponding change in topology. Figure \ref{fig:handleadd} shows the ESIs corresponding to  handle attachments.

\begin{figure}[ht]
\labellist
\hair 2pt
\pinlabel $D$ at -15 105
\pinlabel $D'$ at -15 30

\endlabellist
\centering
\includegraphics[height=4.6cm]{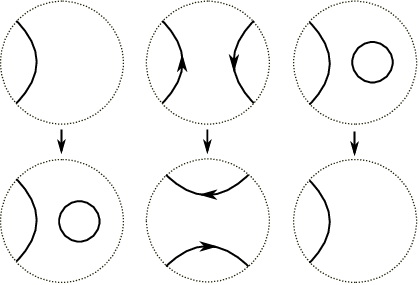}
\caption{ From left to right, oriented diagrammatic $0$-, $1$-, and $2$-handle attachments.}
\label{fig:handleadd}
\end{figure}

Note that a movie $M$  defines an immersed surface $\Sigma_M\subset S^2\times [0,1]$ with \[D_t=\Sigma_M\cap (S^2\times \{t\}).\] We refer to these cross sections as the \emph{levels} of $\Sigma_M$. We will often think of a movie \emph{as} its corresponding immersed surface and vice versa. Let \[\pi:S^3\to S^2\] be the map which sends $\infty$ to $\infty$ and restricts to the projection  \[\pi:\R_{xyz}^3\to\R_{xy}^2\] on the first two coordinates for points in $\R^3\subset S^3$.  Given links $L_0, L_1\subset S^3$ with $\pi(L_i) = D_i$, we can lift  $\Sigma_M$ to a link cobordism $S\subset S^3\times[0,1]$ from $L_0$ to $L_1$ such that \[(\pi\times id) (S) = \Sigma_M.\] As $\diag$ is supposed to serve as a model for $\link$, we ought to declare two movies from $D_0$ to $D_1$ to be equivalent if their  lifts, for fixed $L_0$ and $L_1$,  represent the same morphism in $\link$. Carter and Saito discovered how to interpret this equivalence diagrammatically in \cite{CS}. In particular, two movies  are \emph{equivalent} if they can be related by a finite sequence of the following moves:
\begin{itemize}
\item the \emph{movie moves} of Carter and Saito  \cite[Figures 23-37]{CS},
\item level-preserving isotopies (of their associated immersed surfaces), 
\item interchange of  the levels  containing \emph{distant} ESIs.
\end{itemize}
We will not describe these moves in detail as we do not need them; we refer to the reader to \cite{CS} for more information.

Khovanov homology, as described in the previous subsection, assigns a vector space to a link diagram. To extend Khovanov homology to a functor from $\diag$ to $\vect$, one must assign maps to movies such that equivalent movies are assigned the same map. We describe below how this is done, following Jacobsson  \cite{jacobsson}.

First, one assigns maps to elementary movies. To an elementary movie $M$ from $D_0$ to $D_1$ corresponding to a  Reidemeister move, we assign the associated \emph{standard}  isomorphism \[\Kh(M):\Kh(D_0)\to \Kh(D_1)\] mentioned in the previous subsection. Suppose $M$ is the  movie corresponding to a planar isotopy $\phi$ taking $D_0$ to $D_1$. This isotopy determines a canonical isomorphism  \[F_\phi:\CKh(D_0)\to\CKh(D_1).\] We  assign to $M$ the induced map on homology, \[\Kh(M):=(F_\phi)_*: \Kh(D_0)\to\Kh(D_1).\] It remains to assign a map to a  movie $M$ from $D_0$ to $D_1$ corresponding to an oriented $i$-handle attachment, for $i=0,1,2$. 

For $i=0$, the diagram $D_1$ is a disjoint union  $D_0\sqcup U,$ where $U$ is the crossingless diagram of the unknot. It follows that \[\Kh(D_1)\cong \Kh(D_0)\otimes \Kh(U) = \Kh(D_0)\otimes \Lambda^*(\F\langle U\rangle),\] and we define \[\Kh(M):\Kh(D_0)\to\Kh(D_0)\otimes \Lambda^*(\F\langle U\rangle)\] to be the  map which sends $x$ to $x\otimes 1$ for all $x\in \Kh(D_0)$.

Similarly, for $i=2$, we can view $D_0$ as a disjoint union $D_1\sqcup U$, so that \[\Kh(D_0)\cong\Kh(D_1)\otimes \Lambda^*(\F\langle U\rangle).\] In this case, we define \[\Kh(M): \Kh(D_1)\otimes \Lambda^*(\F\langle U\rangle)\to\Kh(D_0)\] to be the map which sends $x\otimes 1$ to $0$ and $x\otimes U$ to $x$ for all $x\in \Kh(D_1)$.

Finally, for $i=1$, each complete resolution $(D_1)_I$ is obtained from $(D_0)_I$ via a merge or split. The merge and split maps used to define the differential on Khovanov homology therefore give rise to a map \[\Lambda^*V((D_0)_I)\to\Lambda^*V((D_1)_I).\] These maps fit together to define a chain map \[\cKh(D_0)\to\cKh(D_1),\] and  $\Kh(M)$ is the induced map on homology. Put slightly differently, let $\tilde D$ be a diagram with one more crossing than $D_0$ and $D_1$ such that $D_0$ is the $0$-resolution of $\tilde D$ at this  crossing $c$ and $D_1$ is the $1$-resolution (we will think of $c$ as the $(n+1)^{\rm st}$ crossing). Then the Khovanov complex for $\tilde D$ is the mapping cone of the chain map \[T:\cKh(D_1)\to\cKh(D_1),\]  given by the direct sum \[ T = \bigoplus_{I\in \{0,1\}^{n}} d_{I\times\{0\},I\times\{1\}},\] where these \[d_{I\times\{0\},I\times\{1\}}: \Lambda^*V((D_0)_I)\to\Lambda^*V((D_1)_I)\] are components of the differential on $\cKh(\tilde D)$.  Then \[\Kh(M):=T_*:\Kh(D_0)\to \Kh(D_1).\]

Given an arbitrary movie $M$ from $D_0$ to $D_1$, expressed as a composition \[M = M_1\circ\dots\circ M_k\] of elementary movies, we then define \[\Kh(M):\Kh(D_0)\to\Kh(D_1)\] to be the composition \[\Kh(M) = \Kh(M_k)\circ\dots\circ \Kh(M_1).\] In this way, Khovanov homology assigns maps to movies. The key theorem is the following result from \cite{jacobsson}; see also \cite{bncob,khcob}.

\begin{theorem}{\em (Jacobsson \cite{jacobsson})}
If $M$ and $M'$ are equivalent movies, then $\Kh(M) = \Kh(M')$.
\end{theorem}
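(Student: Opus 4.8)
This is Jacobsson's theorem \cite{jacobsson} (see also \cite{bncob,khcob,CMW}), and we use it as a black box; nonetheless, here is the shape of the argument one would give. It suffices to check that $\Kh(M)=\Kh(M')$ whenever $M$ and $M'$ differ by a single move from the three families generating the equivalence relation on movies, since this equality is then preserved under composition of movies. The first two families are formal. If $M$ and $M'$ differ by a level-preserving isotopy, they are assembled from the same ESIs joined by planar isotopies, and since the maps assigned to planar isotopies are induced by the canonical chain isomorphisms $F_\phi$, which satisfy $F_{\phi\circ\psi}=F_\phi\circ F_\psi$ on the nose, the two composites coincide. If $M$ and $M'$ differ by interchanging the levels of two distant ESIs, occurring in disjoint disks $A$ and $B$, one notes that the chain map attached to an ESI supported in $A$ only alters the tensor factor of $\CKh$ recording the circles and crossings inside $A$, acting as the identity on the rest; hence the two chain-level composites commute, and so do the maps they induce on homology.

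The heart of the matter is the verification of the Carter--Saito movie moves MM1--MM15 of \cite[Figures 23-37]{CS}. Those involving only Reidemeister moves reduce to identities among the standard Reidemeister isomorphisms of \cite{kh1}: that a move composed with its inverse induces the identity, that the two resolutions of a Reidemeister-III configuration agree, and the assorted far-commutativity and cyclic relations; each such identity is a finite computation with the explicit chain maps Khovanov writes down. The remaining moves involve one or more diagrammatic handle attachments interacting with Reidemeister moves; for these it does not suffice to know that the two composites agree on homology --- that is precisely what is being proved --- so one must produce an explicit chain homotopy between the two composite chain maps on $\CKh$, tracking how the merge and split maps interact with the Reidemeister isomorphisms at the levels where the handle is attached.

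I expect this last group of mixed moves to be the main obstacle: exhibiting, move by move, the two composite chain maps and the connecting homotopy is bookkeeping-heavy, and the Reidemeister~I and Reidemeister~III isomorphisms --- themselves built from auxiliary crossings and cancellation --- make the formulas cumbersome. The route I would actually take is Bar-Natan's cobordism-theoretic reformulation \cite{bncob}, in which one works in the category of formal tangle cobordisms modulo a small set of local relations; there every movie move becomes a consequence of a handful of checkable local identities. A further bonus of working over $\F=\Z/2\Z$ is that the sign ambiguities which form the one genuine subtlety of the integral theory (compare \cite{CMW}) simply disappear.
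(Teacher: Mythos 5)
The paper does not prove this statement; it simply cites Jacobsson and remarks in one sentence that he proceeds by verifying invariance of the movie maps under each of the generating moves. Your treatment is the same: you invoke the result as a black box and then sketch the shape of Jacobsson's argument, and your sketch is accurate (formal checks for level-preserving isotopies and distant ESI interchange, case analysis of the Carter--Saito movie moves, with chain homotopies needed for the mixed Reidemeister/handle moves, and the observation that Bar-Natan's cobordism-category reformulation and the use of $\F$-coefficients streamline the verification). No gap, and no substantive divergence from the paper, which likewise leans on \cite{jacobsson,bncob,khcob}.
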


Jacobsson proves this theorem by showing that the maps assigned to movies are invariant under the moves listed above. As desired, his result implies that Khovanov homology defines a functor \[\Kh:\diag\to\vect.\]

We next consider how to lift this and other functors from $\diag$ to functors from $\link$. We shall achieve this by defining functors, \[ \Pi_\alpha :\link\to \diag.\]
To define $\Pi_\alpha$, we take for every link $L\subset S^3$ a choice of smooth isotopy $\phi^{\alpha}_L$ which begins at $L$ and ends at a link $\phi^\alpha_L(L)$ on which the projection map  \[\pi:S^3\to S^2\] restricts to a regular immersion. We will also regard such an isotopy as a morphism \[\phi^\alpha_L\in\mor(L,\phi^\alpha_L(L)),\] represented by  the smoothly embedded cylinder obtained from its trace.   On objects, we define $\Pi_\alpha$ by
\[ \Pi_\alpha(L):=\pi(\phi^\alpha_L(L)).\]
Given a morphism $S\in \mor(L_0,L_1)$, let us consider the associated morphism \[\phi^\alpha_{L_1}\circ S \circ (\phi^\alpha_{L_0})^{-1} \in \mor (\phi^\alpha_{L_0}(L_0),\phi^\alpha_{L_1}(L_1)).\] According to \cite[Theorem 5.2, Remark 5.2.1(2)]{CS}, there is a representative $\Sigma$ of this morphism whose image under the projection \[\pi\times id:S^3\times [0,1]\to S^2\times[0,1]\] is a movie. We define $\Pi_\alpha(S)$ to be the equivalence class of this movie, \[\Pi_\alpha(S):=[(\pi\times id)(\Sigma)].\] 
\begin{prop} $\Pi_\alpha:\link\to \diag$ is a functor.
\end{prop}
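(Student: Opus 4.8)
The plan is to verify the three functor axioms in turn: (1) $\Pi_\alpha$ sends identity morphisms to identity morphisms, (2) $\Pi_\alpha$ respects composition, and (3) $\Pi_\alpha$ is well-defined, i.e.\ the equivalence class $[(\pi\times\id)(\Sigma)]$ does not depend on the choice of representative $\Sigma$ of $\phi^\alpha_{L_1}\circ S\circ(\phi^\alpha_{L_0})^{-1}$. Axioms (1) and (2) are essentially formal once (3) is in place, while (3) is the substantive point and is exactly where the Carter--Saito machinery is needed. Since the definition of $\Pi_\alpha(S)$ has already been shown to produce \emph{some} movie via \cite[Theorem 5.2, Remark 5.2.1(2)]{CS}, what remains is to see that any two such movies are equivalent in $\diag$.

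First I would dispose of well-definedness. Suppose $\Sigma$ and $\Sigma'$ are two representatives of the morphism $\phi^\alpha_{L_1}\circ S\circ(\phi^\alpha_{L_0})^{-1}$ in $\mor(\phi^\alpha_{L_0}(L_0),\phi^\alpha_{L_1}(L_1))$ whose images under $\pi\times\id$ are movies $M$ and $M'$. By definition of $\link$, $\Sigma$ and $\Sigma'$ differ by a smooth isotopy of $\R^3\times[0,1]$ fixing a collar of the boundary pointwise. After a small perturbation of this ambient isotopy (rel boundary collar) we may assume it is generic with respect to the projection $\pi\times\id$, so that the projection is a one-parameter family of movies except at finitely many parameter values where the projection of the surface undergoes a codimension-one degeneration. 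Carter and Saito's classification (their movie moves \cite[Figures 23--37]{CS}, together with level-preserving isotopies and interchanges of distant ESIs) is precisely the statement that two movies which are projections of isotopic surfaces rel boundary are related by a finite sequence of these moves; this is the content of \cite[Theorem 5.2]{CS} that we are permitted to invoke. Hence $[M]=[M']$ in $\diag$, so $\Pi_\alpha(S)$ is a well-defined morphism.

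Next, identities: for the identity morphism $\id_L$ of an object $L$, represented by the trivial product cobordism $L\times[0,1]$, the morphism $\phi^\alpha_L\circ\id_L\circ(\phi^\alpha_L)^{-1}$ is the identity morphism of $\phi^\alpha_L(L)$, represented by the product cobordism $\phi^\alpha_L(L)\times[0,1]$; its projection is the constant movie on $\Pi_\alpha(L)$, which is the identity morphism of $\Pi_\alpha(L)$ in $\diag$. For composition, given $S_0\in\mor(L_0,L_1)$ and $S_1\in\mor(L_1,L_2)$, note that
\[
\phi^\alpha_{L_2}\circ(S_1\circ S_0)\circ(\phi^\alpha_{L_0})^{-1}
=\bigl(\phi^\alpha_{L_2}\circ S_1\circ(\phi^\alpha_{L_1})^{-1}\bigr)\circ\bigl(\phi^\alpha_{L_1}\circ S_0\circ(\phi^\alpha_{L_0})^{-1}\bigr)
\]
in $\link$, the inserted $(\phi^\alpha_{L_1})^{-1}\circ\phi^\alpha_{L_1}$ cancelling. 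Choosing representatives $\Sigma_0$ of the first factor and $\Sigma_1$ of the second whose projections are movies $M_0$ and $M_1$, the concatenation $\Sigma_1\circ\Sigma_0$ represents the composite and projects to the concatenated movie $M_1\circ M_0$; by well-definedness (3) this computes $\Pi_\alpha(S_1\circ S_0)$, and $[M_1\circ M_0]=[M_1]\circ[M_0]=\Pi_\alpha(S_1)\circ\Pi_\alpha(S_0)$ by definition of composition in $\diag$. (One minor point to address is that concatenation of the chosen representatives may require a smooth matching of collars near the splicing level $t=\tfrac12$; this is arranged by the usual reparametrization and is absorbed into the isotopy that (3) quotients out.) Thus $\Pi_\alpha$ respects composition, completing the verification.

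The main obstacle is entirely contained in step (3): one must be careful that the ambient isotopy relating two representatives in $\R^3\times[0,1]$ can be taken generic with respect to $\pi\times\id$ so that Carter--Saito applies, and that their list of moves is complete for the notion of movie equivalence used here. Since the excerpt explicitly grants us \cite[Theorem 5.2, Remark 5.2.1(2)]{CS} and has \emph{defined} movie equivalence to be exactly the equivalence relation generated by the Carter--Saito moves, this obstacle is resolved by citation rather than by fresh argument; the remaining content of the proof is the bookkeeping above.
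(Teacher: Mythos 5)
Your proof takes essentially the same route as the paper's: the substantive step is well-definedness on morphisms, which both you and the paper reduce to Carter and Saito's result that properly isotopic surfaces (rel boundary) project to equivalent movies. The paper cites this as \cite[Theorem 7.1]{CS} (their relative classification theorem), whereas you attribute the statement to \cite[Theorem 5.2]{CS}; Theorem 5.2 is the existence result (some representative projects to a movie), and you should cite Theorem 7.1 for the claim that two such projections are related by movie moves. Your explicit checks of identity and composition, and your remark on genericity of the ambient isotopy, are more detail than the paper provides but do not change the argument.
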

\begin{proof} Clearly $\Pi_\alpha$ is well-defined on objects.  To see that it is well-defined on morphisms, we use the relative version of Carter and Saito's main result \cite[Theorem 7.1]{CS}, which states that properly isotopic surfaces project to equivalent  movies.  Thus,  the movies resulting from the projections of any two representatives of the morphism $\phi^\alpha_{L_1}\circ S\circ (\phi^\alpha_{L_0})^{-1}$  are  equivalent.
 \end{proof}
The apparent dependence of the functor $\Pi_\alpha$ on the choices of isotopies $\phi_L^\alpha$ is undesirable.  In fact, we have the following.
\begin{prop}\label{prop:natiso}Suppose that $\{\phi_L^\alpha \}$ and $\{\phi_L^\beta \}$ are two collections of isotopies to links with regular projections, as above, defining functors \[\Pi_\alpha, \Pi_\beta:\link\to\diag.\]   Then the functors $\Pi_\alpha$ and $\Pi_\beta$ are naturally isomorphic.
\end{prop}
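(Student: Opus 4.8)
The plan is to construct an explicit natural isomorphism $\eta:\Pi_\alpha\Rightarrow\Pi_\beta$ whose component at a link $L$ is the morphism in $\diag$ obtained by projecting the composite isotopy $\phi^\beta_L\circ(\phi^\alpha_L)^{-1}$. Concretely, $\phi^\beta_L\circ(\phi^\alpha_L)^{-1}$ is a morphism in $\link$ from $\phi^\alpha_L(L)$ to $\phi^\beta_L(L)$, both of which have regular projections; applying $\Pi_\alpha$ (or, equivalently, applying Carter and Saito's projection result \cite[Theorem 5.2]{CS} directly) yields an equivalence class of movies $\eta_L:\Pi_\alpha(L)\to\Pi_\beta(L)$ in $\diag$. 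Since the underlying morphism in $\link$ is a cylinder, hence invertible (its inverse being the class of $\phi^\alpha_L\circ(\phi^\beta_L)^{-1}$), and since $\Pi_\alpha$ is a functor, $\eta_L$ is an isomorphism in $\diag$. This handles the "isomorphism" half of the statement.

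The remaining work is naturality: for every morphism $S\in\mor(L_0,L_1)$ in $\link$, the square
\begin{equation*}
\begin{gathered}
\xymatrix@C=40pt@R=25pt{
\Pi_\alpha(L_0) \ar[r]^-{\Pi_\alpha(S)} \ar[d]_{\eta_{L_0}} & \Pi_\alpha(L_1) \ar[d]^{\eta_{L_1}} \\
\Pi_\beta(L_0) \ar[r]_-{\Pi_\beta(S)} & \Pi_\beta(L_1)
}
\end{gathered}
\end{equation*}
commutes in $\diag$. The key observation is that this diagram is the image under "project to a movie" of the corresponding diagram in $\link$, which commutes there essentially tautologically: both composites $\eta_{L_1}\circ\Pi_\alpha(S)$ and $\Pi_\beta(S)\circ\eta_{L_0}$ lift to the single morphism $\phi^\beta_{L_1}\circ S\circ(\phi^\beta_{L_0})^{-1}$ in $\mor(\phi^\beta_{L_0}(L_0),\phi^\beta_{L_1}(L_1))$ — indeed $\phi^\beta_{L_1}\circ S\circ(\phi^\beta_{L_0})^{-1} = \bigl(\phi^\beta_{L_1}\circ(\phi^\alpha_{L_1})^{-1}\bigr)\circ\bigl(\phi^\alpha_{L_1}\circ S\circ(\phi^\alpha_{L_0})^{-1}\bigr)\circ\bigl(\phi^\alpha_{L_0}\circ(\phi^\beta_{L_0})^{-1}\bigr)^{-1}$ exhibits the factorization matching $\eta_{L_1}\circ\Pi_\alpha(S)$, while reading the associativity the other way matches $\Pi_\beta(S)\circ\eta_{L_0}$. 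Then I would invoke the relative Carter–Saito theorem \cite[Theorem 7.1]{CS} (already used above to show $\Pi_\alpha$ is well-defined) to conclude that these two a priori different representatives of the same morphism in $\link$ project to equivalent movies, hence equal morphisms in $\diag$. This is exactly the same well-definedness argument already invoked for $\Pi_\alpha$ itself, just applied to the longer composite.

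The main obstacle — or rather the point requiring care — is bookkeeping: making sure that "projecting a composite of cylinders and the surface $\Sigma$" is compatible with composition of movies in $\diag$, i.e. that the movie representing a composite morphism in $\link$ can be taken to be the concatenation of movies representing the factors. This is again a consequence of \cite[Theorem 5.2, Remark 5.2.1(2)]{CS} together with the relative result \cite[Theorem 7.1]{CS}: any choice of representatives may be isotoped rel boundary so that their projections are movies that concatenate, and the resulting concatenated movie is equivalent to any other movie representing the total morphism. Since the entire argument reduces to commutativity in $\link$ (which is automatic) plus the Carter–Saito translation between proper isotopy in $\link$ and equivalence of movies in $\diag$, no genuinely new input is needed beyond what has already been established for $\Pi_\alpha$; the proof is essentially a diagram chase performed one level up in $\link$ and then pushed down.
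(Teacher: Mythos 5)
Your proposal is correct and takes essentially the same approach as the paper, which defines the natural isomorphism by exactly these projected cylinders $[(\pi\times id)(\Sigma)]$ for $\phi^\beta_L\circ(\phi^\alpha_L)^{-1}$ and reduces the naturality square to the relative Carter--Saito theorem (the paper leaves that square as an exercise). One small slip worth fixing: the common lift of the two composites around the square is $\phi^\beta_{L_1}\circ S\circ(\phi^\alpha_{L_0})^{-1}$, a morphism from $\phi^\alpha_{L_0}(L_0)$ to $\phi^\beta_{L_1}(L_1)$, not $\phi^\beta_{L_1}\circ S\circ(\phi^\beta_{L_0})^{-1}$, so your displayed factorization should read $\bigl(\phi^\beta_{L_1}\circ(\phi^\alpha_{L_1})^{-1}\bigr)\circ\bigl(\phi^\alpha_{L_1}\circ S\circ(\phi^\alpha_{L_0})^{-1}\bigr)=\phi^\beta_{L_1}\circ S\circ(\phi^\alpha_{L_0})^{-1}=\bigl(\phi^\beta_{L_1}\circ S\circ(\phi^\beta_{L_0})^{-1}\bigr)\circ\bigl(\phi^\beta_{L_0}\circ(\phi^\alpha_{L_0})^{-1}\bigr)$, with no inverted last factor.
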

\begin{proof}  The assignment $\theta_\alpha^\beta$ which sends a link $L$ to the morphism \[\theta_\alpha^\beta(L):=[(\pi\times id)(\Sigma)]\in\mor(\Pi_\alpha(L),\Pi_\beta(L)),\] where $\Sigma$ is a representative of the morphism $\phi^\beta_L\circ (\phi^\alpha_L)^{-1}$ whose image under $\pi\times id$ is a movie, gives a well-defined  natural isomorphism from $\Pi_\alpha$ to $\Pi_\beta$.  Commutativity of the square
 \[ \xymatrix@C=30pt@R=25pt{
 \Pi_\alpha(L_0) \ar[r]^-{\Pi_\alpha(S)} \ar[d]_{\theta_\alpha^\beta(L_0)} & \Pi_\alpha(L_1) \ar[d]^{\theta_\alpha^\beta(L_1)} \\
 \Pi_\beta(L_0) \ar[r]_-{\Pi_\beta(S)} &\Pi_\beta(L_1)} \]
 follows from the work of Carter and Saito; we leave it as an exercise. It is also not hard to show that $\theta_\beta^\alpha$ is the inverse natural transformation, and that \[\theta_\beta^\gamma \circ \theta_\alpha^\beta = \theta_\alpha^\gamma\] for any three collections of isotopies.
\end{proof}Moreover we have
\begin{prop} For any choice of isotopies $\phi_L^\alpha$, the functor $\Pi_\alpha:\link\to \diag$ is an equivalence of categories.
\end{prop}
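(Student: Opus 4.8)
The plan is to prove that $\Pi_\alpha$ is an equivalence by verifying the three standard criteria: essential surjectivity, fullness, and faithfulness. Throughout I would exploit two facts. First, each isotopy $\phi^\alpha_L$, viewed via its trace cylinder as a morphism in $\link$, is an isomorphism there. Second, Carter and Saito's work supplies both directions of the correspondence between movies and surfaces: properly isotopic surfaces with fixed boundary project to equivalent movies, and, conversely, movies that are equivalent in the sense used to define $\diag$ lift, with prescribed boundary links, to properly isotopic surfaces.

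For essential surjectivity, given a link diagram $D$ I would lift it to a link $L\subset\R^3$ with $\pi(L)=D$. The trace of $\phi^\alpha_L$ is a cylinder cobordism from $L$ to $\phi^\alpha_L(L)$; by \cite[Theorem 5.2, Remark 5.2.1(2)]{CS} it has a representative projecting under $\pi\times\id$ to a movie $M$ from $D$ to $\pi(\phi^\alpha_L(L))=\Pi_\alpha(L)$. I would then check that $[M]$ is an isomorphism in $\diag$, with inverse the class of the movie produced in the same way from the trace of $(\phi^\alpha_L)^{-1}$: the two composite movies present cobordisms that are properly isotopic to the trivial cylinders on $L$ and on $\phi^\alpha_L(L)$, hence by Carter and Saito are equivalent to the constant movies on $D$ and on $\Pi_\alpha(L)$. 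This gives $\Pi_\alpha(L)\cong D$.

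For fullness, given $L_0,L_1$ and a morphism $[M]\in\mor_{\diag}(\Pi_\alpha(L_0),\Pi_\alpha(L_1))$, I would lift the associated surface $\Sigma_M$ with boundary links $\phi^\alpha_{L_0}(L_0)$ and $\phi^\alpha_{L_1}(L_1)$ (which project to $\Pi_\alpha(L_0)$ and $\Pi_\alpha(L_1)$) to a cobordism $\Sigma$ with $(\pi\times\id)(\Sigma)=\Sigma_M$, and set $S:=(\phi^\alpha_{L_1})^{-1}\circ\Sigma\circ\phi^\alpha_{L_0}\in\mor_{\link}(L_0,L_1)$. Then $\phi^\alpha_{L_1}\circ S\circ(\phi^\alpha_{L_0})^{-1}$ is represented by $\Sigma$, so $\Pi_\alpha(S)=[(\pi\times\id)(\Sigma)]=[M]$ by the definition of $\Pi_\alpha$ on morphisms (using that this definition is already known to be well defined). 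For faithfulness, given $S,S'\in\mor_{\link}(L_0,L_1)$ with $\Pi_\alpha(S)=\Pi_\alpha(S')$, I would pick representatives $\Sigma,\Sigma'$ of $\phi^\alpha_{L_1}\circ S\circ(\phi^\alpha_{L_0})^{-1}$ and $\phi^\alpha_{L_1}\circ S'\circ(\phi^\alpha_{L_0})^{-1}$ projecting to movies; the hypothesis says these movies are equivalent, so by the converse direction of Carter and Saito's theorem $\Sigma$ and $\Sigma'$ are properly isotopic, and composing with the inverses of the isomorphisms $\phi^\alpha_{L_i}$ forces $S=S'$.

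The main obstacle is not a new argument but the precise invocation of Carter and Saito: the fullness and faithfulness steps both rely on the fact that movie equivalence does not merely follow from proper isotopy of the lifted surfaces rel boundary but exactly characterizes it, equivalently that the list of moves defining morphisms in $\diag$ is complete and that lifting a movie with prescribed boundary links is well defined up to proper isotopy. This completeness is the content of Carter and Saito's classification, so the work lies in citing the appropriate relative version and confirming it yields the needed converse; the remainder is the formal bookkeeping of pre- and post-composing with the isomorphisms $\phi^\alpha_L$.
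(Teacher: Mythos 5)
Your proof is correct, and it takes a genuinely different route from the paper's. The paper first invokes Proposition~\ref{prop:natiso} to observe that any two functors $\Pi_\alpha$, $\Pi_\beta$ are naturally isomorphic, so it suffices to check the claim for one convenient choice of isotopies --- namely, one where $\phi^\alpha_L$ is the identity whenever $L$ already has a regular projection. With that choice $\Pi_\alpha$ is literally surjective on objects, and the paper then only needs fullness and faithfulness, both supplied by Carter--Saito as you describe. Your argument instead works with an arbitrary choice of isotopies and proves essential surjectivity directly: you lift a given diagram $D$ to a link $L$ with $\pi(L)=D$, project the trace cylinder of $\phi^\alpha_L$ to a movie $M$ from $D$ to $\Pi_\alpha(L)$, and verify via the Carter--Saito correspondence that $[M]$ is an isomorphism in $\diag$. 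Your fullness and faithfulness arguments are essentially identical to the paper's, just carried out without the preliminary reduction, so you must additionally pre- and post-compose with the morphisms $\phi^\alpha_{L_i}$ and their inverses, which is harmless bookkeeping. What the paper's reduction buys is brevity --- the essential-surjectivity step evaporates --- and it uses Proposition~\ref{prop:natiso}, which the paper needs anyway. What your approach buys is that it is self-contained at this step and makes the role of the isotopies explicit, at the cost of having to verify that the projected trace cylinder really is an isomorphism in $\diag$, which you do correctly. Both routes rest on the same relative Carter--Saito theorem in the end.
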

\begin{proof}  Since any two such functors are naturally isomorphic, it is enough to verify the proposition for a good choice of isotopies $\phi_L^\alpha$.  We take isotopies $\phi_L^\alpha$ such that if $L$ is already regularly immersed under the map $\pi$, then $\phi_L^\alpha$ is the identity isotopy.
Hence we have that $\Pi_\alpha$ is surjective on objects.  Furthermore, $\Pi_\alpha$ is bijective on morphism sets (that is, it  is full and faithful), which suffices to establish the equivalence by, e.g. \cite[Theorem 1, IV.4]{MacLane}.  Surjectivity on morphisms is easy since movies can easily be lifted to cobordisms in $S^3\times [0,1]$, whereas injectivity on morphisms is again a consequence of \cite[Theorem 7.1]{CS}.
\end{proof}

One can then lift Khovanov homology to a functor from $\link$  by precomposing with any $\Pi_\alpha$.  We shall denote this functor by  \[\Kh_\alpha:=\Kh\circ\Pi_\alpha:\link\to\vect.\] This functor assigns vector spaces to links, but these vector spaces  depend on extra data, the extra data being the set of isotopies $\{\phi^\alpha_L\}_{L\subset S^3}$ to links with regular projections. We would prefer a functor which assigns vector spaces to links  themselves, and does not depend on the choice of isotopies.  Our solution rests on the natural isomorphisms we have described between the functors $\Pi_\alpha$.

Indeed, using notation from the proof of Proposition \ref{prop:natiso}, we obtain  isomorphisms
\[ Kh_\alpha^\beta(L):=Kh(\theta_\alpha^\beta(L)):  Kh_\alpha(L)\to Kh_\beta(L)\] satisfying
$Kh_\beta^\gamma\circ Kh_\alpha^\beta= Kh_\alpha^\gamma$  and $Kh_\alpha^\alpha=\text{Id}$, for all $\alpha,\beta,\gamma$.  Thus the collection of vector spaces $\{Kh_\alpha(L)\}_\alpha$ and isomorphisms $\{Kh_{\alpha}^\beta(L)\}_{\alpha,\beta}$ form a transitive system in the sense of \cite[Chapter 1.6]{eilenbergsteenrod}.  We  define $Kh(L)$ to be the vector space associated to this system in the standard way, as the inverse limit over the complete directed graph on  the set of isotopies.  A morphism $S\in\mor(L_0,L_1)$ then gives rise to a well-defined map \[\Kh(S):\Kh(L_0)\to\Kh(L_1),\] so that $\Kh$ defines a functor from $\link$ to $\vect$ which is independent of any choice of isotopies, as desired.
 \begin{remark}
	\label{rmk:linkvsisotopyclass}  Of course, one should not expect that $Kh$ can be lifted to a functor associating a vector space to each \emph{isotopy class} of link.  This is because there exist knots with self-isotopies inducing non-identity automorphisms of the Khovanov invariant.
\end{remark}
We conclude this section by noting that reduced Khovanov homology defines a similar functor  \[\Khr:\link_\infty\to\vect\] from the \emph{based link cobordism category} $\link_\infty$. Objects of $\link_\infty$ are oriented links in $S^3$ containing the basepoint $\infty$ and morphisms are proper isotopy classes of collared, smoothly embedded link cobordisms in $S^3\times[0,1]$ containing the arc $\{\infty\}\times [0,1]$. In particular, two surfaces represent the same morphism if they differ by smooth isotopy fixing a neighborhood of the boundary and this arc pointwise. In order to define the functor $\Khr$ above, one first defines a functor from the \emph{based diagrammatic link cobordism category} $\diag_\infty$. Objects of this category are equivalence classes of \emph{based} movies in which each $D_t$ contains $\infty$. Any such movie can be expressed as a composition of elementary movies corresponding to Reidemeister moves, handle attachments, and planar isotopies, all supported away from $\infty$. Two based movies are considered equivalent if they are related by obvious based versions of moves from before. To define a functor \[\Khr:\diag_{\infty} \to\vect\] one then associates maps to elementary based movies and proceeds as before, noting that Jacobsson's work implies that equivalent based movies are assigned the same map. One then promotes this to a functor from $\link_\infty$ by a straightforward adaptation of the ideas above.

\begin{remark} 
\label{rmk:promotion} It is clear that a similar procedure works for promoting any functor from $\diag$ to $\spect$ to a  functor from $\link$ to $\spect$, and similarly for the based categories. With this in mind, we will be content to  work  solely in the diagrammatic categories in the rest of this paper.
\end{remark}

\section{Khovanov-Floer theories}
\label{sec:defs}
In this section, we give a precise definition of a  \emph{Khovanov-Floer theory} (and its reduced variant) and describe what it means for such a theory to be \emph{functorial}. The main challenge lies in setting up the right algebraic framework, as is illustrated by  thinking about Kronheimer and Mrowka's spectral sequence in singular instanton knot homology. The difficulty is that their construction does not associate a filtered chain complex to a link diagram alone, but to a link diagram  together with some auxiliary data (e.g. families of metrics and perturbations), so it is  not immediately obvious in what sense  the resulting spectral sequence gives an assignment of objects in $\spect$ to link diagrams.  The same is true in Ozsv{\'a}th and Szab{\'o}'s work. Indeed, Kronheimer and Mrowka's construction assigns to a  diagram $D$ and a choice of data $\adata$ a filtered chain complex \[C^\adata(D)=(C^\adata(D),d^\adata(D))\footnote{We will often leave out the differential in the the notation for a chain complex.}\] and an isomorphism of vector spaces
\[q^\adata: \Kh(D)\to E_2(C^\adata(D)).\] Any two choices of auxiliary data $\adata,\adata'$ result in what one might call \emph{quasi-isomorphic} constructions, in that there exists a filtered chain map \[f:C^\adata(D)\to C^{\adata'}(D)\] such that \[E_2(f)=q^{\adata'}\circ (q^\adata)^{-1}\] (which implies that $f$ is a quasi-isomorphism by the results of Subsection \ref{ssec:homalg}). So, really, one would  like to say that what Kronheimer and Mrowka's construction assigns to a link diagram $D$ is a \emph{quasi-isomorphism class} of pairs $(C^\adata(D),q^\adata)$. The algebraic framework introduced below is meant to make this idea  meaningful.

Given a graded vector space $V$, we define a \emph{$V$-complex}  to be a pair $(C,q)$, where $C$ is a filtered chain complex and \[q:V\to E_2(C)\] is a grading-preserving isomorphism of vector spaces. Suppose $(C,q)$ and $(C',q')$ are $V$- and $W$-complexes, and  let \[T:V\to W\] be a homogeneous degree $k$  map of graded vector spaces. A \emph{morphism from $(C,q)$ to $(C',q')$ which agrees on $E_2$ with $T$} is a degree $k$ filtered chain map \[f:C\to C'\] such that \[E_2(f) =q'\circ T\circ q^{-1}.\] Note that  if $f$ and $g$ are two such morphisms, then $E_i(f) = E_i(g)$ for $i=2$ and, therefore, for all $i\geq 2$ by Lemma \ref{lem:sseqmaps}. 
A \emph{quasi-isomorphism} is a morphism from one $V$-complex  to another  which agrees on $E_2$ with the identity map on $V$. 

\begin{remark}
Note that the existence of a quasi-isomorphism from $(C,q)$ to $(C',q')$ implies the existence of a quasi-isomorphism from $(C',q')$ to $(C,q)$ by Lemma \ref{lem:ssmapiso}. 
\end{remark}

For any two quasi-isomorphisms \[f,g:(C,q)\to(C',q'),\] we have that $E_i(f) = E_i(g)$ for all $i\geq 2$ by the discussion above. Moreover, given quasi-isomorphisms \[f:(C,q)\to(C',q')\, \,\,\text{ and }\,\,\,g:(C',q')\to(C'',q''),\] we have that  \[E_i(g\circ f) = E_i(g)\circ E_i(f)\] for all $i\geq 2$. In other words, the higher pages in the spectral sequences associated to quasi-isomorphic $V$-complexes are  \emph{canonically} isomorphic as vector spaces, and, since the $E_i(f)$ are chain maps, these higher pages are also canonically isomorphic \emph{as chain complexes}. More precisely, for each $i\geq 2$, the collection of chain complexes $(E_i,d_i)$ associated with representatives of a given quasi-isomorphism class $\cA$ of $V$-complexes fits into a transitive system of chain complexes, from which one can extract an honest chain complex by taking the inverse limit. In summary, then, a quasi-isomorphism class $\cA$ of $V$-complexes therefore determines  a well-defined  graded chain complex $(E_i(\cA),d_i(\cA))$ for each $i\geq 2$. This is the sense in which, for example, Kronheimer and Mrowka's construction provides an assignment of objects in $\spect$ to link diagrams.


Now suppose $\cA$ is a quasi-isomorphism class of $V$-complexes and $\cB$ is a quasi-isomorphism class of $W$-complexes, and let \[T:V\to W\] be a homogeneous degree $k$ map of vector spaces. We will say that \emph{there exists a morphism from $\cA$ to $\cB$ which agrees on $E_2$ with $T$} if there exists a morphism \begin{equation}\label{eqn:ssmorf} f:(C,q)\to (C',q')\end{equation} which agrees on $E_2$ with $T$ for some representatives $(C,q)$ of $\cA$ and $(C',q')$ of $\cB$. The morphism in (\ref{eqn:ssmorf}) gives rise to a  homogeneous degree $k$ map \begin{equation}\label{eqn:ssmor}E_i(\cA)\to E_i(\cB)\end{equation} for each $i\geq 2$. Furthermore, this map is  independent of the representative morphism in (\ref{eqn:ssmorf}) in the sense that   if $(C'',q'')$ and $(C''',q''')$ are other representatives of $\cA$ and $\cB$ and \[f':(C'',q'')\to (C''',q''')\] is another morphism which agrees on $E_2$ with $T$, then the diagram \[ \xymatrix@C=30pt@R=25pt{
E_i(C) \ar[r]^-{E_i(f)} \ar[d]_{} & E_i(C') \ar[d]^{} \\
E_i(C'') \ar[r]_-{E_i(f')} & E_i(C''')
} \] commutes for each $i\geq 2,$ where the vertical arrows indicate the canonical isomorphisms between these higher  pages. In summary,  the existence of a morphism from $\cA$ to  $\cB$ which agrees on $E_2$ with $T$ \emph{canonically} determines a   chain map from $(E_i(\cA),d_i(\cA))$ to $(E_i(\cB),d_i(\cB))$ for all $i\geq 2$.

The discussion above shows that quasi-isomorphism classes of $V$-complexes behave  exactly like honest filtered chain complexes with regard to the spectral sequences they induce. This will enable us to bypass the sort of technical difficulty mentioned at the beginning of this subsection for the spectral sequences defined by Kronheimer-Mrowka and Ozsv{\'a}th-Szab{\'o}.

Finally, note that if $(C,q)$ is a  $V$-complex and $(C',q')$ is a $W$-complex, then there is a natural tensor product in the form of a $(V\otimes W)$-complex $(C\otimes C',q\otimes q')$. This  extends in the obvious way to a notion of tensor product between quasi-isomorphism classes of $V$- and $W$-complexes.





Below, we define the term  \emph{Khovanov-Floer theory}. In the definition, we are thinking of the vector space $\Kh(D)$  as being singly-graded by  some  linear combination of the homological and quantum gradings. We will omit this linear combination from the notation. In practice, it will depend on the theory of interest:  we will use the homological grading for the spectral sequence constructions of Kronheimer-Mrowka, Ozv{\'a}th-Szab{\'o}, and Szab{\'o}, and the quantum grading for Lee's construction.

\begin{definition}
\label{def:kftheory}
A \emph{Khovanov-Floer theory} $\cA$ is a rule which assigns to every  link diagram $D$ a quasi-isomorphism class  of $\Kh(D)$-complexes $\cA(D)$, such that: 
\begin{enumerate}
\item if $D'$ is obtained from $D$ by a  planar isotopy, then there exists a morphism \[\cA(D)\to \cA(D')\] which agrees on $E_2$ with the  induced map from $\Kh(D)$ to $\Kh(D')$;
\item if $D'$ is obtained from $D$ by a  diagrammatic $1$-handle attachments, then there exists a morphism \[\cA(D)\to \cA(D')\] which agrees on $E_2$ with the  induced map from $\Kh(D)$ to $\Kh(D')$;
\item  for any two diagrams $D,D'$, there exists a morphism \[\cA(D\sqcup D')\to \cA(D)\otimes \cA(D')\] which agrees on $E_2$ with the standard isomorphism  \[\Kh(D\sqcup D')\to\Kh(D)\otimes \Kh(D');\]
\item for any diagram $D$ of the unlink, $E_2(\cA(D)) = E_{\infty}(\cA(D))$.
\end{enumerate}
\end{definition}

A  \emph{reduced Khovanov-Floer theory} is defined almost exactly as above, except that all link diagrams are now based; planar isotopies and $1$-handle attachments fix and avoid the basepoint $\infty$, respectively;  we  replace all occurrences of $\Kh$ with $\Khr$;  and we replace condition $(3)$ with the condition that for disjoint diagrams $D$ and $D'$, where $D$ contains $\infty$,  there exists a morphism \[\cA(D\sqcup D')\to \cA(D)\otimes \cA(D'\sqcup U_\infty)\] which agrees on $E_2$ with the standard isomorphism  \[\Khr(D\sqcup D')\to\Khr(D)\otimes \Khr(D'\sqcup U_{\infty})\] described at the end of Subsection \ref{ssec:kh}.




\begin{remark}
Note that if $\cA$ is a reduced Khovanov-Floer theory, then the assignment \[D\mapsto\cA(D\sqcup U_\infty)\] (we can assume $D$ avoids $\infty$ after small perturbation) naturally defines a Khovanov-Floer theory,  via the relationship between reduced and unreduced Khovanov homology mentioned in Remark \ref{rmk:khkhr}.
\end{remark}


An immediate consequence of these definitions is that a Khovanov-Floer theory $\cA$  assigns a \emph{canonical} morphism of spectral sequences \[\{(E_i(\cA(D)),d_i(\cA(D))\to (E_i(\cA(D'),E_i(\cA(D'))\}_{i\geq 2}\]  to a movie corresponding to a planar isotopy or  diagrammatic $1$-handle attachment. Of course, we wish to show, in proving Theorem \ref{thm:khfunct}, that  a Khovanov-Floer theory assigns a morphism of spectral sequences to \emph{any} movie, such that equivalent movies are assigned the same morphism. This leads to the definition below.

\begin{definition}
\label{def:kfunct}
A Khovanov-Floer theory $\cA$ is  \emph{functorial} if, given a movie from $D$ to $D'$, there exists a morphism \[\cA(D)\to \cA(D')\] which agrees on $E_2$ with the induced map from $\Kh(D)$ to $\Kh(D')$.
\end{definition}

Thus, a functorial Khovanov-Floer theory assigns a canonical morphism of spectral sequences \[\{(E_i(\cA(D)),d_i(\cA(D))\to (E_i(\cA(D'),E_i(\cA(D'))\}_{i\geq 2}\]  to any movie, which agrees on $E_2$ with the corresponding movie map on Khovanov homology. It follows that equivalent movies are assigned the same morphism since they are assigned the same map in Khovanov homology. In other words, the spectral sequence associated with a functorial Khovanov-Floer theory defines a functor from $\diag$ to $\spect$ and, therefore, by Subsection \ref{ssec:funct}, a functor \[F:\diag\to\spect\] satisfying $SV_2\circ F = \Kh$. (In particular,  the higher pages of the spectral sequence associated with a functorial Khovanov-Floer theory are link type invariants.)
Thus,  in order to prove Theorem \ref{thm:khfunct}, it suffices to prove the following theorem, which we do in the next section.

\begin{theorem}
\label{thm:kfunct2}
Every  Khovanov-Floer theory is functorial. 
\end{theorem}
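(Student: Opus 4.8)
The plan is to reduce the functoriality statement to assigning a morphism to each elementary movie, since any movie is a composition of elementary ones and morphisms of quasi-isomorphism classes compose on $E_i$ (for $i\geq 2$) via the canonical isomorphisms. For elementary movies corresponding to planar isotopy or diagrammatic $1$-handle attachment, the required morphism is built directly into the definition of a Khovanov-Floer theory (conditions (1) and (2)), and it automatically agrees on $E_2$ with the corresponding Khovanov map by construction. The $0$- and $2$-handle cases follow from the disjoint union axiom (3) together with the unlink collapse axiom (4): the relevant Khovanov maps $x\mapsto x\otimes 1$ and $x\otimes 1\mapsto 0$, $x\otimes U\mapsto x$ are induced by chain-level maps $\cKh(D)\to\cKh(D)\otimes\Lambda^*(\F\langle U\rangle)$, and one realizes these on $\cA$ via the morphism from axiom (3) combined with the degree-$0$ filtered chain maps to and from $(E_2,0)$ furnished by Lemma \ref{lem:einfty} applied to $\cA(U)$. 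So the crux is the Reidemeister moves.

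For a Reidemeister move, the strategy is the one sketched in the introduction: use movie moves to first isotope the diagram so that the region where the move takes place involves only small unknotted circles, split off by a sequence of diagrammatic $1$-handle attachments (and their ``time-reversals''). Concretely, given a Reidemeister move on a diagram $D$, one finds diagrams $D$ and $D'$ and an intermediate diagram $\hat D = D_0 \sqcup U_{\text{loc}}$, where $U_{\text{loc}}$ is a local unknot diagram carrying the Reidemeister move in isolation, such that $D$ is obtained from $\hat D$ by $1$-handle attachments merging $U_{\text{loc}}$ back into $D_0$, and similarly for $D'$ after the Reidemeister move is applied to $U_{\text{loc}}$. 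Since $U_{\text{loc}}$ (before and after the move) is a diagram of the unknot, axiom (4) tells us $E_2(\cA(U_{\text{loc}})) = E_\infty(\cA(U_{\text{loc}}))$, so we may use Lemma \ref{lem:einfty} to replace $\cA(U_{\text{loc}})$ by its $E_2$ page with zero differential; on that page the Reidemeister isomorphism is literally the Khovanov map. Tensoring with $\cA(D_0)$ (via axiom (3)) and composing with the $1$-handle morphisms (axiom (2)) on either side yields a morphism $\cA(D)\to\cA(D')$ whose induced map on $E_2$ we must check equals the Khovanov Reidemeister isomorphism.

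This last verification is where the real work lies, and it is the main obstacle. The point is that on $E_2$ everything in sight is a Khovanov-homology-level statement: the morphism we have built out of disjoint-union, handle, and collapse pieces induces \emph{some} map $\Kh(D)\to\Kh(D')$, and we need it to be exactly Khovanov's standard isomorphism for the Reidemeister move. The cleanest way is to observe that the very same composition of ESIs and isotopies, when fed through the functor $\Kh:\diag\to\vect$ (which we already know is well-defined by Jacobsson's theorem), computes a map on Khovanov homology; because the movie we constructed is equivalent to the original Reidemeister movie, $\Kh$ assigns it the standard isomorphism; and by construction each constituent morphism of $\cA$ agrees on $E_2$ with the corresponding constituent Khovanov map, so their composite agrees on $E_2$ with the composite Khovanov map, which is the standard isomorphism. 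Once all three types of elementary movies are handled, an arbitrary movie $M = M_k\circ\cdots\circ M_1$ gets the composite morphism, and since it agrees on $E_2$ with $\Kh(M)$ it is independent of the decomposition and of the choice of equivalent movie — precisely because Lemma \ref{lem:sseqmaps} forces a morphism of $V$-complexes to be determined on all pages $E_i$, $i\geq 2$, by its value on $E_2$. Thus $\cA$ is functorial.

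The delicate bookkeeping will be (a) making the reduction to a localized Reidemeister move precise using Carter–Saito's movie moves without circular dependence on functoriality of $\cA$ itself, and (b) keeping track of gradings throughout, since the handle maps are not all grading-preserving and one must confirm that the degree-$k$ bookkeeping in Lemmas \ref{lem:ssmapiso}–\ref{lem:einfty} is consistent with the degrees of the Khovanov maps being matched. Neither of these is conceptually hard, but both require care; I would isolate (a) as a separate combinatorial lemma about expressing Reidemeister movies in a standard ``split-off'' form, proved purely in $\diag$.
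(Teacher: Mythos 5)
Your plan reproduces the structure of the paper's proof almost exactly: decompose a movie into elementary movies; handle planar isotopy and $1$-handles by axioms (1)--(2); get $0$- and $2$-handles from axioms (3)--(4) together with Lemma~\ref{lem:einfty} applied to $\cA(U)$; and, for Reidemeister moves, arrange via movie moves that the move happens on a separate unlink component, use the tensor axiom to split it off, use axiom (4) and Lemma~\ref{lem:einfty} to replace $\cA$ of the unlink pieces by $(\Kh(U),\mathrm{id})$ where the Reidemeister isomorphism is literally the Khovanov map, and verify $E_2$-agreement by invoking Jacobsson's theorem for $\Kh:\diag\to\vect$. This is precisely Propositions~\ref{prop:02handle}--\ref{prop:reid} and Lemmas~\ref{lem:R1}--\ref{lem:R3}.

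The one place where your explicit description, taken at face value, would not quite work is the localization step for the Reidemeister move. You describe a movie $D \to \hat D = D_0 \sqcup U_{\text{loc}} \to D_0\sqcup U'_{\text{loc}} \to D'$ where both the splitting-off of $U_{\text{loc}}$ and the merging-back of $U'_{\text{loc}}$ are done by diagrammatic $1$-handle attachments. A saddle followed (after the Reidemeister move) by another saddle along essentially the same arc produces a genus-one cobordism, which is \emph{not} equivalent in $\diag$ to the genus-zero Reidemeister movie; the Khovanov maps of the two movies will differ, so your $E_2$-agreement step would fail. The paper avoids this by using $0$-handle attachments (births) to \emph{create} the disjoint unknots on which the Reidemeister move is performed, then merging them back in with $1$-handles: birth followed by saddle is a disk, so the cobordism stays a product and the movie is equivalent to the single Reidemeister move (Figures~\ref{fig:R1}--\ref{fig:R3}). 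The paper's R3 decomposition also threads through a few auxiliary R2 moves, so the lemmas are proved in the order R1, R2, R3 rather than independently. None of this changes the conceptual shape of your argument, and you flagged the movie-move bookkeeping as the delicate point, but the specific decomposition you wrote down should be replaced by the birth/saddle version before the $E_2$-agreement argument via Jacobsson can be run.
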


\begin{remark}
\label{rmk:weakstrong}
The rather simple conditions in the definition of a Khovanov-Floer theory may be thought of as a sort of \emph{weak functoriality}. In practice, it is often relatively easy to verify that a theory satisfies these conditions (we will provide several such verifications in Section \ref{sec:kfthys}). In contrast, functoriality has not been verified for any of spectral sequence constructions that we know of.  Reidemeister invariance has been established in a number of cases (including for the spectral sequences we consider in this paper), but the arguments are generally adapted to the particular theory under consideration. Our approach is more universal. In particular, Theorem \ref{thm:kfunct2} may  be interpreted  as saying that weak functoriality implies functoriality.
\end{remark}

There is an obvious analogue of Definition \ref{def:kfunct} for reduced Khovanov-Floer theories, involving based movies and reduced Khovanov homology. The corresponding analogue of Theorem \ref{thm:kfunct2}, that every reduced Khovanov-Floer theory is functorial, also holds by essentially the same proof.

\section{Khovanov-Floer theories are functorial}
\label{sec:proofs}
This section is dedicated to proving Theorem \ref{thm:kfunct2} (and, therefore, Theorem \ref{thm:khfunct}).

Suppose $\cA$ is a  Khovanov-Floer theory. We will prove below that $\cA$ is functorial. We first show that $\cA$ assigns a canonical morphism of spectral sequences to the movie corresponding to \emph{any} diagrammatic handle attachment (as opposed to only $1$-handle attachments). This follows immediately from the proposition below.

\begin{proposition}
\label{prop:02handle} If $D'$ is obtained from $D$ by a  diagrammatic handle attachment, then there exists a morphism \[\cA(D)\to \cA(D')\] which agrees on $E_2$ with the induced map from $\Kh(D)$ to $\Kh(D')$.
\end{proposition}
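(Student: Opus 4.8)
The plan is to reduce the case of $0$- and $2$-handle attachments to the axioms already available: property (3) on disjoint union, property (4) on collapse for unlink diagrams (via Lemma~\ref{lem:einfty}), and Lemma~\ref{lem:ssmapiso} allowing us to invert $E_2$-isomorphisms of filtered complexes. Since a $0$-handle and a $2$-handle are mirror images of one another (the movie run backwards), once one is handled the other follows by running the construction in the opposite direction; I would treat the $0$-handle attachment in detail.

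First I would recall that if $D'$ is obtained from $D$ by a diagrammatic $0$-handle attachment, then $D' = D\sqcup U$, where $U$ is the crossingless diagram of the unknot, and the corresponding Khovanov map $\Kh(D)\to\Kh(D\sqcup U)$ sends $x\mapsto x\otimes 1$ under the standard isomorphism $\Kh(D\sqcup U)\cong\Kh(D)\otimes\Kh(U)$. By property (3) of a Khovanov-Floer theory, there is a morphism $\cA(D\sqcup U)\to\cA(D)\otimes\cA(U)$ agreeing on $E_2$ with the standard isomorphism; by Lemma~\ref{lem:ssmapiso} (applied on $E_2$, where this map is an isomorphism) we also get a morphism in the reverse direction $\cA(D)\otimes\cA(U)\to\cA(D\sqcup U)$ agreeing on $E_2$ with the inverse standard isomorphism. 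So it suffices to produce a morphism $\cA(D)\to\cA(D)\otimes\cA(U)$ agreeing on $E_2$ with $x\mapsto x\otimes 1$.

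The key observation is that $U$ is a diagram of the unknot, hence by property (4) we have $E_2(\cA(U)) = E_\infty(\cA(U))$. Choose a representative filtered complex $C'$ of the quasi-isomorphism class $\cA(U)$; then Lemma~\ref{lem:einfty} furnishes a degree $0$ filtered chain map $C'\to (E_2(C'),0)$ inducing the identity on $E_2$. Composing with the structure isomorphism $q_U\colon\Kh(U)\to E_2(C')$, this says we have, up to the canonical identifications, a filtered chain map realizing the inclusion of the degree-$0$ generator $1\in\Kh(U)$ in the form of a map from $(\F,0)$ to any representative of $\cA(U)$: concretely, the element $1\in\Kh(U)$ determines a cycle $z$ in the top filtration level of $C'$, and $\lambda\mapsto\lambda z$ is a degree $0$ filtered chain map $(\F,0)\to C'$ inducing $1\mapsto 1$ on $E_2$ (here I am using that $1$ has top $\mathbf{p}$-grading so sits at the top of the filtration, i.e.\ that $E_2(\cA(U))$ collapsed means every $E_2$ class lifts to a genuine cycle). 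Tensoring this with the identity filtered chain map on a chosen representative of $\cA(D)$ gives a degree $0$ filtered chain map $C_D\otimes\F \to C_D\otimes C'$, that is $C_D\to C_D\otimes C'$, inducing $x\mapsto x\otimes 1$ on $E_2$; this is exactly the required morphism $\cA(D)\to\cA(D)\otimes\cA(U)$. Precomposing/postcomposing with the morphisms from the previous paragraph yields $\cA(D)\to\cA(D\sqcup U) = \cA(D')$ agreeing on $E_2$ with $x\mapsto x\otimes 1$, as desired. The $2$-handle case is dual: there $D = D'\sqcup U$, the Khovanov map kills $x\otimes 1$ and sends $x\otimes U\mapsto x$, and one builds the filtered analogue of the projection $\Kh(U)\to\F$ using the same collapse fact (now extracting the bottom filtration level), then tensors and composes with the disjoint-union morphisms.

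\textbf{Main obstacle.} I expect the one genuinely delicate point to be making precise the passage from ``$E_2(\cA(U))$ collapses'' to ``there is a filtered chain map from/to a one-dimensional complex realizing the correct $E_2$ map'' — i.e.\ checking that the cancellation/exact-couple bookkeeping of Subsection~\ref{ssec:homalg} really does let one realize $1\in\Kh(U)$ by a genuine cycle at the top of the filtration and the dual functional at the bottom, and that tensoring filtered chain maps behaves as expected on $E_2$. All of this is encoded in Lemma~\ref{lem:einfty} and the discussion of adjusted maps, so the argument is essentially an assembly of the tools already set up; the bulk of the work is organizing the compositions and invoking Lemma~\ref{lem:sseqmaps} to know the resulting morphism is independent of choices.
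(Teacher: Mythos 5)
Your proposal is correct and takes essentially the same route as the paper's proof: reverse the direction of the condition-(3) morphism (the paper does this implicitly, you via Lemma \ref{lem:ssmapiso}), use condition (4) together with Lemma \ref{lem:einfty} to deal with the unknot factor, and compose, with the $2$-handle case handled dually. The paper packages the unknot step slightly more cleanly by observing that $\cA(U)$ contains the trivial $\Kh(U)$-complex $(\Kh(U),\id)$ as a representative, so the morphism $x\mapsto x\otimes 1$ can be written down directly rather than by lifting $1$ to a genuine cycle in a general representative, but this is the same idea.
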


\begin{proof}[Proof of Proposition \ref{prop:02handle}]
The $1$-handle case is  part of the definition of a Khovanov-Floer theory. Suppose $D'$ is obtained from $D$ by a $0$-handle attachment. Then $D'=D\sqcup U$. Thus, by condition (3) in Definition \ref{def:kftheory}, there exists a morphism 
\begin{equation}\label{eqn:0handle}\cA(D)\otimes\cA(U)\to \cA(D')\end{equation}
which agrees on $E_2$ with the standard isomorphism  
\[g_2:\Kh(D)\otimes\Kh(U)\to \Kh(D').\] Condition (4) in Definition \ref{def:kftheory} says that 
\[
E_\infty(\cA(U))=E_2(\cA(U))\cong\Kh(U).
\]
It is then an easy consequence of Lemma \ref{lem:einfty} that  the quasi-isomorphism class $\cA(U)$ contains the \emph{trivial} $\Kh(U)$-complex $(\Kh(U),id)$. It follows   that there exists a morphism 
\begin{equation}\label{eqn:0handle2}\cA(D)\to\cA(D)\otimes\cA(U)\end{equation}
which agrees on $E_2$ with the  isomorphism 
\[g_1:\Kh(D)\to\Kh(D)\otimes \Kh(U)\] which sends $x$ to $x\otimes 1$. Indeed, if $(C,q)$ is a representative of $\cA(D)$, then $(C\otimes \Kh(U),q\otimes id)$ is a representative of $\cA(D)\otimes\cA(U)$ and the morphism \[(C,q)\to (C\otimes\Kh(D),q\otimes id)\] which sends $x$ to $x\otimes 1$ is a representative of the desired morphism in (\ref{eqn:0handle2}).
Let $f_1$ and $f_2$ be representatives of the morphisms in (\ref{eqn:0handle2}) and (\ref{eqn:0handle}), respectively. Then the composition $f_2\circ f_1$ from a representative of $\cA(D)$ to a representative of $\cA(D')$ is a morphism which agrees on $E_2$ with the composition \[g_2\circ g_1:\Kh(D)\to \Kh(D'),\] and this latter composition is precisely the map on Khovanov homology associated to the $0$-handle attachment. 
The $2$-handle case is virtually identical.
\end{proof}

Next, we  show that $\cA$ assigns a canonical morphism of spectral sequences to the movie corresponding to a Reidemeister move. This follows immediately from the proposition below.

\begin{proposition}
\label{prop:reid} If $D'$ is obtained from $D$ by a Reidemeister move, then there exists a morphism \[\cA(D)\to \cA(D')\] which agrees on $E_2$ with the standard isomorphism  from $\Kh(D)$ to $\Kh(D')$.
\end{proposition}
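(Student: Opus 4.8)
The plan is to reduce, via the movie calculus of Carter and Saito, to the case in which the Reidemeister move is carried out among unknotted, split components, where the statement follows almost formally from the unlink condition.

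\emph{The unlink case.} Suppose first that $D$ (and hence $D'$, since a Reidemeister move preserves link type) is a diagram of the unlink. Then condition (4) of Definition~\ref{def:kftheory} gives $E_2(\cA(D)) = E_\infty(\cA(D))$, so Lemma~\ref{lem:einfty} shows that the quasi-isomorphism class $\cA(D)$ contains the trivial $\Kh(D)$-complex $(\Kh(D),\mathrm{id})$, and likewise $\cA(D')$ contains $(\Kh(D'),\mathrm{id})$. The standard Khovanov isomorphism $\rho\colon\Kh(D)\to\Kh(D')$ associated with the Reidemeister move is a grading-preserving isomorphism, and, viewed as a degree $0$ filtered chain map between these trivial complexes, it is then itself a morphism $\cA(D)\to\cA(D')$ which agrees on $E_2$ with $\rho$. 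This disposes of the proposition for unlink diagrams.

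\emph{The general case.} Here I would invoke the movie calculus of Carter and Saito \cite{CS} together with an explicit localization construction. The Reidemeister move $D\to D'$ is supported in a disk $\Delta\subset\R^2$, outside of which $D$ and $D'$ agree, and the strands of $D$ meeting $\Delta$ form a trivial tangle. The goal is to produce a movie $M$ from $D$ to $D'$, equivalent (as a movie) to the single-Reidemeister-move movie $M_R$, which is a finite composition $M = N_\ell\circ\cdots\circ N_1$ of elementary movies, each of which is either a planar isotopy, a diagrammatic $0$-, $1$-, or $2$-handle attachment, or a Reidemeister move taking place within a sub-diagram that is split from the rest of the diagram and is a diagram of the unlink. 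Concretely, near $\Delta$ one births (via $0$-handle attachments) small unknotted, unlinked components modelling the tangle of the move, performs the Reidemeister move there --- where it is a move among split unknotted components --- and then fuses the result onto the strands of $D$ by elementary $1$-handle attachments and planar isotopies, disposing of any leftover unknots with $2$-handle attachments. One verifies that the resulting surface is properly isotopic rel boundary to the trace of the Reidemeister move, so that $M\simeq M_R$ by \cite{CS}. (For a type~I move this can be arranged using handles and planar isotopies alone; for types~II and~III the auxiliary Reidemeister moves among the freshly born split unknots are genuinely needed, which is why the unlink case above is essential.)

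\emph{Assembly.} Granting such a decomposition, the conclusion is immediate: by conditions (1)--(3) of Definition~\ref{def:kftheory} (using Lemma~\ref{lem:ssmapiso} to invert a disjoint-union morphism where necessary), Proposition~\ref{prop:02handle}, and the unlink case, each elementary movie $N_j$ is assigned a morphism between the relevant quasi-isomorphism classes agreeing on $E_2$ with the corresponding Khovanov map. Composing these gives a morphism $\cA(D)\to\cA(D')$ whose induced map on $E_2$ is the composite of those Khovanov maps, which by Jacobsson's theorem \cite{jacobsson} equals $\Kh(M_R) = \rho$ since $M\simeq M_R$. Hence the morphism agrees on $E_2$ with the standard isomorphism, as required; its behaviour on all higher pages is then automatically determined by Lemma~\ref{lem:sseqmaps}, though only existence is needed here. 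The main obstacle is precisely the localization step: exhibiting, for an arbitrary Reidemeister move, a movie-equivalent decomposition into handle attachments, planar isotopies, and Reidemeister moves confined to split unlinked sub-diagrams. Once this combinatorial input is secured, the algebraic bookkeeping is forced, because agreement on $E_2$ is pinned down by the already-functorial Khovanov maps, so the entire content of the proposition is the existence of lifts of the elementary moves --- exactly what Definition~\ref{def:kftheory}, Proposition~\ref{prop:02handle}, and the unlink case provide.
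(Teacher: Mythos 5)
Your overall strategy coincides with the paper's: localize the Reidemeister move to a split unlinked sub-diagram (created by $0$-handle births and later absorbed by $1$-handles), use conditions (3) and (4) of the definition together with Lemma~\ref{lem:einfty} to handle the move on the unlinked piece, compose the elementary morphisms, and pin down the $E_2$ behaviour via Jacobsson. The paper proves exactly this, organizing it as three lemmas (one per move type) with explicit movies (Figures~2--4). Your ``unlink case'' reduction and ``assembly'' paragraph are both correct as algebra.

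However, your parenthetical claims about what the localized decomposition consists of are imprecise, and this is exactly where you yourself flag the main obstacle. First, the claim that a Reidemeister~I move ``can be arranged using handles and planar isotopies alone'' is wrong: a $0$-handle births a crossingless circle, and $1$-handles and planar isotopies preserve the number of crossings, so to manufacture the kink you must perform an RI move somewhere; in the paper's movie (Figure~2) this is an RI move on a split unknot, which is precisely why the unlink case is needed already for RI. Second, and more seriously, for RIII the decomposition cannot be made to consist only of handles, isotopies, and R-moves confined to split unlinked sub-diagrams. In the paper's Figure~4, after the three $1$-handle attachments merge the auxiliary triangle onto the strands of $D$, the resulting diagram $D_5$ has six extra crossings relative to $D'$, and these are removed by three RII moves that necessarily involve the now-merged strands of $D$, not split unknots. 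The paper handles this by an induction on move type: it proves RI, then RII (each by localization to a split unlink), and then RIII, invoking the already-proved RII lemma for the non-split RII cleanup moves. Your proposal as written asserts a stronger decomposition than actually exists; it would be repaired by making the induction RI $\Rightarrow$ RII $\Rightarrow$ RIII explicit, allowing previously-treated lower moves as elementary building blocks in the movie for a higher move, which is exactly what the paper does.
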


Before proving Proposition \ref{prop:reid}, let us first assume this proposition is true and prove    Theorem \ref{thm:kfunct2}.

\begin{proof}[Proof of Theorem \ref{thm:kfunct2}] Suppose $M$ is a movie from $D$ to $D'$. Express this movie as a composition  \[M = M_k\circ \dots \circ M_1,\] where each $M_i$ is an elementary movie from a diagram $D_i$ to a diagram $D_{i+1}$. Let $f_i$ be a morphism from a representative of $\cA(D_i)$ to a representative of $\cA(D_{i+1})$ which agrees on $E_2$ with the corresponding map on Khovanov homology. For the elementary movies corresponding to planar isotopies, such maps exist by Definition \ref{def:kftheory}. For those corresponding to diagrammatic handle attachments or Reidemeister moves, such maps exist by Propositions \ref{prop:02handle} and \ref{prop:reid}. The composition \[f_{k}\circ\dots \circ f_1\] is therefore a morphism from a representative of $\cA(D=D_1)$ to a representative of $\cA(D'=D_{k+1})$ which agrees on $E_2$ with  the map on Khovanov homology induced by this movie.  This proves that $\cA$ is functorial.
\end{proof}

It therefore only remains to prove Proposition \ref{prop:reid}. We break this verification into  three lemmas---one for each type of Reidemeister move. The idea common to the proofs of all  three lemmas is, as mentioned in the introduction,  to  arrange via movie moves that  the Reidemeister move  takes place amongst unknotted components.  This idea was used by the third author in \cite{lobb} in showing that a generic perturbation of Khovanov-Rozansky homology gives rise to a lower-bound on the slice genus. 



\begin{lemma}
\label{lem:R1}
Suppose $D'$ is obtained from $D$ by a Reidemeister I move. Then there exists a morphism \[\cA(D)\to\cA(D')\] which agrees on $E_2$ with the standard isomorphism from $\Kh(D)$ to $\Kh(D')$. \end{lemma}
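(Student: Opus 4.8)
The plan is to use the strategy flagged in the introduction and in the paragraph preceding the lemma: reduce the Reidemeister~I move to a move taking place on an unknotted component, where conditions (3) and (4) of Definition~\ref{def:kftheory} give us enough control to build the required morphism. First I would use movie moves to replace the given movie for the Reidemeister~I move by an equivalent movie of the following shape. Near the kink being introduced (or removed) we may, by a planar isotopy, assume the relevant strand lies in a small disk. We then insert and later cancel a small unknotted circle: concretely, we describe a movie that (a) adds a tiny $0$-handle creating a crossingless unknot $U$ disjoint from $D$, (b) performs a $1$-handle attachment merging $U$ with the strand of $D$, thereby realizing the Reidemeister~I kink locally as a feature living on what was the unknotted component, and (c) undoes no further surgery --- the net effect is exactly the Reidemeister~I move from $D$ to $D'$. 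That such a movie is equivalent to the standard Reidemeister~I movie is a statement about movie moves (Carter--Saito), and since both movies induce the \emph{same} map on Khovanov homology (Jacobsson's theorem), it suffices for us to produce a morphism on $\cA$ agreeing on $E_2$ with the map induced by \emph{this} particular movie.

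Next I would assemble the morphism on $\cA$ by composing the pieces supplied by the axioms. For the $0$-handle step, Proposition~\ref{prop:02handle} (already proved) gives a morphism $\cA(D)\to\cA(D\sqcup U)$ agreeing on $E_2$ with the Khovanov $0$-handle map; crucially, the proof of that proposition uses condition~(4) for the unlink component $U$ to see that $\cA(U)$ contains the trivial complex $(\Kh(U),\id)$. For the $1$-handle step, condition~(2) of Definition~\ref{def:kftheory} directly gives a morphism agreeing on $E_2$ with the Khovanov $1$-handle (merge) map. For any remaining planar isotopies at the beginning and end, condition~(1) applies. Composing these morphisms yields a morphism from a representative of $\cA(D)$ to a representative of $\cA(D')$ whose induced map on $E_2$ is the composite of the corresponding Khovanov maps. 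By construction that composite is the map on Khovanov homology induced by our chosen movie, hence --- again by Jacobsson's theorem --- it equals the standard Reidemeister~I isomorphism $\Kh(D)\to\Kh(D')$. By Lemma~\ref{lem:sseqmaps}, any two morphisms agreeing on $E_2$ agree on all later pages, so this morphism is the desired one, canonically determining the morphism of spectral sequences.

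The main obstacle I expect is purely diagrammatic bookkeeping: writing down an explicit movie that genuinely realizes the Reidemeister~I move as a composition of a $0$-handle, a $1$-handle, and planar isotopies supported in a disk, and checking (via the Carter--Saito movie moves) that it is equivalent to the standard Reidemeister~I movie. Everything algebraic is then forced, because the only thing we ever need to verify about a morphism on $\cA$ is its effect on $E_2$, and on $E_2$ our morphism is the Khovanov map attached to an honest movie for the Reidemeister~I move --- so Jacobsson's invariance of Khovanov movie maps does the rest. (The analogous reductions for Reidemeister~II and~III in Lemmas~\ref{lem:R1}'s successors will follow the same template, with correspondingly more elaborate unknotting movies, which is presumably why the authors isolate a lemma per move.)
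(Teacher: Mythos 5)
The key step is missing, and the diagrammatic reduction you propose does not actually exist. A diagrammatic $0$-handle attachment creates a \emph{crossingless} unknot $U$ disjoint from $D$, and a diagrammatic $1$-handle attachment is a local saddle move in a disk that introduces no crossings; planar isotopies of course also preserve the set of crossings. Composing such moves can therefore never produce the single crossing that distinguishes $D'$ from $D$, so there is no movie of the form $0$-handle, then $1$-handle, then planar isotopies realizing a Reidemeister~I move. Your scheme as written is not a valid decomposition of the Reidemeister~I movie, and the appeal to Carter--Saito movie moves cannot repair it because the two movies do not have the same starting and ending diagrams after the $0$- and $1$-handle steps.

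The step you have omitted is exactly where the real content lies. The paper's decomposition is $D = D_1 \to D_2 = D_1 \sqcup U_0 \to D_3 = D_1 \sqcup U_1 \to D_4 = D'$, where $U_0$ and $U_1$ are the $0$- and $1$-crossing diagrams of the unknot: first a $0$-handle attaches the crossingless $U_0$, then a \emph{Reidemeister~I move carried out on the disjoint unknotted component} takes $U_0$ to $U_1$, and finally a $1$-handle merges $U_1$ into the strand. The middle step is the one that produces the crossing, and it cannot be dispatched by Proposition~\ref{prop:02handle} or by condition~(2); it requires combining condition~(3) (to factor $\cA(D_1 \sqcup U_i)$ through $\cA(D_1)\otimes\cA(U_i)$) with condition~(4) and Lemma~\ref{lem:einfty} (to see that $\cA(U_0)$ and $\cA(U_1)$ each contain the trivial complex $(\Kh(U_i),\mathrm{id})$, which makes it trivial to write down a morphism $\cA(U_0)\to\cA(U_1)$ inducing the standard Reidemeister~I isomorphism on $E_2$). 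Without invoking condition~(4) at the Reidemeister step in this way you have no mechanism at all to build a morphism corresponding to a Reidemeister move, which is the whole point of the lemma. So this is not merely bookkeeping you have deferred; it is the heart of the argument, and your proposed reduction bypasses it by asserting a movie that does not exist.
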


\begin{proof} Consider the  link diagrams  shown in Figure \ref{fig:R1}. The arrows in this figure are meant to indicate the fact that the movie represented by  the sequence of diagrams \[D=D_1,D_2,D_3,D_4=D',\] as indicated by the thin arrows, is equivalent to the movie consisting of the single Reidemeister I move from $D$ to $D'$, as indicated by the thick arrow. These two movies therefore induce the same map from $\Kh(D)$ to $\Kh(D')$. Thus, to prove Lemma \ref{lem:R1}, it suffices to prove that there exist morphisms 
\begin{align}
\label{eqn:mor1}\cA(D_1)\to\cA(D_2)\\
\label{eqn:mor2}\cA(D_2)\to\cA(D_3)\\
\label{eqn:mor3}\cA(D_3)\to\cA(D_4)
\end{align}
which agree on $E_2$ with the corresponding maps on Khovanov homology. The top and bottom arrows in Figure \ref{fig:R1} correspond to $0$- and $1$-handle attachments; therefore, the morphisms  in (\ref{eqn:mor1}) and (\ref{eqn:mor3}) exist  by Proposition \ref{prop:02handle}. It remains to show that the morphism in (\ref{eqn:mor2}) exists.

\begin{figure}[ht]
\labellist
\hair 2pt
\pinlabel $D=D_1$ at 135 535
\pinlabel $D_2$ at 406 535
\pinlabel $D_3$ at 405 10
\pinlabel $D'=D_4$ at 132 10

\endlabellist
\centering
\includegraphics[height=7cm]{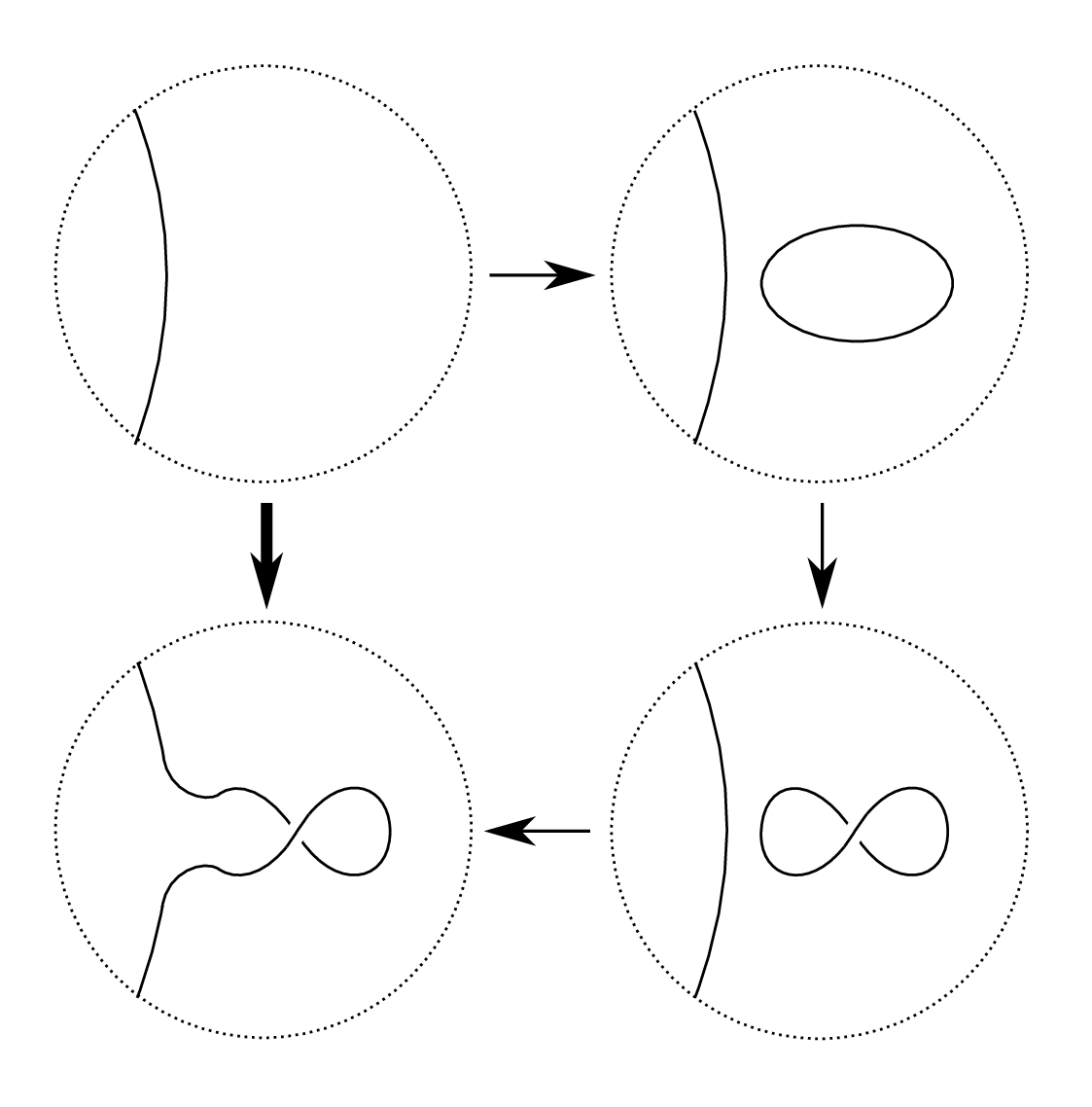}
\caption{ The diagrams $D=D_1,\dots,D_4=D'$. The movie indicated by the thin arrows is equivalent to the movie corresponding to the Reidemeister I move,  indicated by the thick arrow.}
\label{fig:R1}
\end{figure}

Let $U_0$ and $U_1$ be the  $0$- and $1$-crossing diagrams of the unknot in  $D_2$ and $D_3$, so that $D_2=D_1\sqcup U_0$ and $D_3 = D_1\sqcup U_1$. Thus, by condition (3) in Definition \ref{def:kftheory}, there exist morphisms 
\begin{align}
\label{eqn:du1}&\cA(D_2)\to\cA(D_1)\otimes \cA(U_0)\\
\label{eqn:du2}&\cA(D_1)\otimes \cA(U_1)\to \cA(D_3)
\end{align}
which agree on $E_2$ with the standard isomorphisms  
\begin{align*}
g_1&:\Kh(D_2)\to\Kh(D_1)\otimes \Kh(U_0)\\
g_3&:\Kh(D_1)\otimes \Kh(U_1)\to \Kh(D_3).
\end{align*}
Condition (4) in Definition \ref{def:kftheory} says that 
\begin{align*}
E_\infty(\cA(U_i))&=E_2(\cA(U_i))\cong\Kh(U_i)
\end{align*}
for $i=0,1$, which implies, just as in the proof of Proposition \ref{prop:02handle}, that  the quasi-isomorphism class $\cA(U_i)$ contains the trivial $\Kh(U_i)$-complex $(\Kh(U_i),id)$ for $i=0,1$. It follows immediately that there exists a morphism 
\begin{equation}\label{eqn:u0u1}\cA(U_0)\to\cA(U_1)\end{equation} 
which agrees on $E_2$ with the standard isomorphism 
\begin{equation*}\label{eqn:ku0u1}g_2:\Kh(U_0)\to\Kh(U_1)\end{equation*}
associated to the Reidemeister I move relating these two diagrams of the unknot. Let $f_1,$ $f_2,$ and $f_3$ be representatives of the morphisms in (\ref{eqn:du1}), (\ref{eqn:u0u1}), and (\ref{eqn:du2}), respectively. Then the composition \[f_3\circ (id\otimes f_2)\circ f_1\] from a representative of $\cA(D_2)$ to a representative of $\cA(D_3)$ is a morphism which agrees on $E_2$ with the composition \begin{equation*}\label{eqn:maponkh}g_3\circ (id\otimes g_2)\circ g_1:\Kh(D_2)\to\Kh(D_3),\end{equation*} and this latter composition is equal to the isomorphism from $\Kh(D_2)$ to $\Kh(D_3)$ associated to the Reidemeister I move. It follows  that the morphism in (\ref{eqn:mor2}) exists.
\end{proof}

\begin{lemma}
\label{lem:R2}
Suppose $D'$ is obtained from $D$ by a Reidemeister II move. Then there exists a morphism \[\cA(D)\to\cA(D')\] which agrees on $E_2$ with the standard isomorphism from $\Kh(D)$ to $\Kh(D')$. \end{lemma}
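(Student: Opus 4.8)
The plan is to mimic the proof of Lemma \ref{lem:R1}, replacing the single Reidemeister I move by a Reidemeister II move and using an appropriate ``fission'' of that move into handle attachments and a Reidemeister II move performed amongst unknotted components. Concretely, I would first exhibit a movie from $D$ to $D'$, say through intermediate diagrams $D=D_1,D_2,D_3,D_4=D'$, where the first map is a $0$-handle attachment introducing a small unknotted circle $U_0$ disjoint from $D$, the middle map is a Reidemeister II move that takes place entirely amongst the (now unknotted) components—so it relates a diagram $D_1\sqcup U_0$ to a diagram $D_1\sqcup U_2$ for diagrams $U_0,U_2$ of the unknot (or, more precisely, of a two-component unlink after the move is carried out, as appropriate to the local picture of Reidemeister II)—and the last map is a $1$-handle attachment (a saddle) undoing the extra circle to recover $D'$. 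By Carter--Saito's movie moves this composite movie is equivalent to the single Reidemeister II move, so the two induce the same map on $\Kh$; hence it suffices to produce morphisms $\cA(D_1)\to\cA(D_2)$, $\cA(D_2)\to\cA(D_3)$, $\cA(D_3)\to\cA(D_4)$ agreeing on $E_2$ with the corresponding Khovanov maps.

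The outer two morphisms exist by Proposition \ref{prop:02handle}, since they correspond to $0$- and $1$-handle attachments. For the middle one, I would argue exactly as in Lemma \ref{lem:R1}: apply condition $(3)$ of Definition \ref{def:kftheory} to split off the unknotted part, obtaining morphisms
\[
\cA(D_2)\to\cA(D_1)\otimes\cA(U_0),\qquad \cA(D_1)\otimes\cA(U_2)\to\cA(D_3),
\]
which agree on $E_2$ with the standard disjoint-union isomorphisms. Since $U_0$ and $U_2$ are diagrams of unlinks, condition $(4)$ gives $E_2(\cA(U_i))=E_\infty(\cA(U_i))$, so by Lemma \ref{lem:einfty} the class $\cA(U_i)$ contains the trivial complex $(\Kh(U_i),\id)$. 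Consequently there is a morphism $\cA(U_0)\to\cA(U_2)$ agreeing on $E_2$ with the standard Reidemeister II isomorphism $\Kh(U_0)\to\Kh(U_2)$ (it is just a filtered chain map between trivial complexes realizing the given map on homology). Composing, $f_3\circ(\id\otimes f_2)\circ f_1$ gives a morphism $\cA(D_2)\to\cA(D_3)$ agreeing on $E_2$ with $g_3\circ(\id\otimes g_2)\circ g_1$, and by the functoriality of Khovanov homology under movies (applied to the three-step movie that equals the Reidemeister II move amongst the unknotted components) this composite is exactly the standard Reidemeister II isomorphism $\Kh(D_2)\to\Kh(D_3)$. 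Composing all three morphisms $\cA(D_1)\to\cA(D_2)\to\cA(D_3)\to\cA(D_4)$ and using Lemma \ref{lem:sseqmaps} to see that any two morphisms agreeing on $E_2$ agree on all higher pages, we obtain the desired morphism $\cA(D)\to\cA(D')$ agreeing on $E_2$ with the standard Reidemeister II map.

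The main obstacle, and the only genuinely Reidemeister-II-specific point, is the first step: verifying that a Reidemeister II move can be realized, up to movie equivalence, by a $0$-handle, then a Reidemeister II move performed entirely within unknotted (indeed crossingless or nearly crossingless) components, then a $1$-handle. This is a diagrammatic/movie-move computation in the spirit of Figure \ref{fig:R1}; one introduces a small circle next to the bigon created by the Reidemeister II move, slides it so the crossing change happens among trivial circles, and then cancels it with a saddle, checking at each stage that the resulting surface in $\R^2\times[0,1]$ is isotopic rel boundary to the trace of the Reidemeister II move. Once this picture is in hand, the algebra is entirely parallel to Lemma \ref{lem:R1} and uses only conditions $(3)$ and $(4)$ of Definition \ref{def:kftheory} together with Proposition \ref{prop:02handle} and Lemmas \ref{lem:einfty} and \ref{lem:sseqmaps}.
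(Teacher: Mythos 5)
Your overall strategy is the same as the paper's: decompose the Reidemeister II move, via movie equivalence, into $0$-handle attachments, an R2 move performed among unknotted components, and $1$-handle attachments; handle the outer steps by Proposition \ref{prop:02handle}; and handle the middle step by using conditions (3) and (4) of Definition \ref{def:kftheory} together with Lemma \ref{lem:einfty} to replace the unknotted part by the trivial complex, exactly as in Lemma \ref{lem:R1}.

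The one concrete defect is in the decomposition itself. You propose attaching \emph{one} $0$-handle to introduce ``a small unknotted circle $U_0$'' and later cancelling with \emph{one} $1$-handle ``undoing the extra circle.'' That cannot work: a Reidemeister II move involves two distinct strands of $D$, so the R2 pattern must be pulled off onto a \emph{two-component unlink}. In the paper's Figure \ref{fig:R2}, the step $D_1\to D_2$ is two $0$-handle attachments producing $D_2 = D_1 \sqcup U_0$ with $U_0$ the crossingless two-component unlink, and $D_3\to D_4$ is two $1$-handle attachments splicing the two circles of $U_2$ (the two-crossing two-component unlink diagram) onto the two relevant strands of $D$. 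A single circle with an R2 kink, attached by one saddle to one strand, would produce two crossings on a single strand rather than the R2 configuration between two strands, so it cannot give $D'$. Your parenthetical remark about ``a two-component unlink'' shows you sense the issue, but the surrounding description (one circle, one saddle) is wrong, and you correctly flag this as the remaining thing to verify. Once the count of circles and handles is corrected, the algebraic part of your argument is exactly the paper's proof.
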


\begin{proof}
Consider the  link diagrams  shown in Figure \ref{fig:R2}. The  arrow from $D=D_1$ to $D_2$ represents two $0$-handle attachments; the  arrow from $D_2$ to $D_3$ represents a Reidemeister II move; and the  arrow from $D_3$ to $D_4=D'$ represents two $1$-handle attachments. The movie represented by these thin arrows  is equivalent to the movie from $D$ to $D'$ corresponding to the single Reidemeister II move indicated by the thick arrow. These two movies therefore induce the same map from $\Kh(D)$ to $\Kh(D')$. Thus, to prove Lemma \ref{lem:R2}, it suffices to prove that there exist morphisms 
\begin{align}
\label{eqn:mor12}\cA(D_1)\to\cA(D_2)\\
\label{eqn:mor22}\cA(D_2)\to\cA(D_3)\\
\label{eqn:mor32}\cA(D_3)\to\cA(D_4)
\end{align}
which agree on $E_2$ with the corresponding maps on Khovanov homology. 
Since the top and bottom arrows in Figure \ref{fig:R2} correspond to handle attachments, the morphisms  in (\ref{eqn:mor12}) and (\ref{eqn:mor32}) exist  by Proposition \ref{prop:02handle}. It remains to show that the morphism in (\ref{eqn:mor22}) exists.

\begin{figure}[ht]
\labellist
\hair 2pt
\pinlabel $D=D_1$ at 122 532
\pinlabel $D_2$ at 395 532
\pinlabel $D_3$ at 395 9
\pinlabel $D'=D_4$ at 119 9

\endlabellist
\centering
\includegraphics[height=7cm]{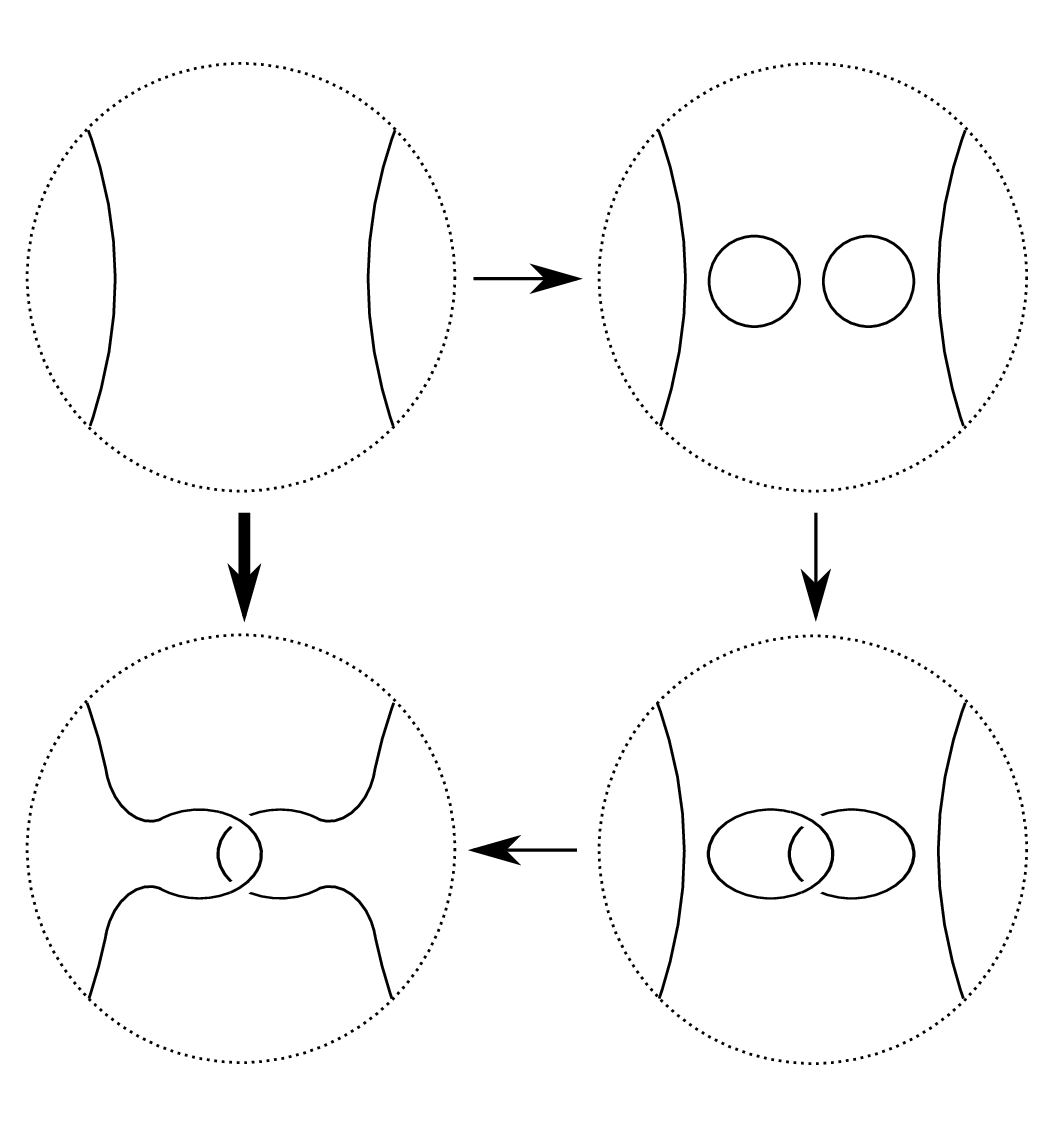}
\caption{ The diagrams $D=D_1,\dots,D_4=D'$. The movie indicated by the thin arrows is equivalent to the movie corresponding to the Reidemeister II move,  indicated by the thick arrow.}
\label{fig:R2}
\end{figure}

Let $U_0$ and $U_2$ be the  $0$- and $2$-crossing diagrams of the $2$-component unlink  in  $D_2$ and $D_3$, so that $D_2=D_1\sqcup U_0$ and $D_3 = D_1\sqcup U_2$. By condition (3) in Definition \ref{def:kftheory}, there exist morphisms 
\begin{align}
\label{eqn:du12}&\cA(D_2)\to\cA(D_1)\otimes \cA(U_0)\\
\label{eqn:du22}&\cA(D_1)\otimes \cA(U_2)\to \cA(D_3)
\end{align}
which agree on $E_2$ with the standard isomorphisms  
\begin{align*}
g_1&:\Kh(D_2)\to\Kh(D_1)\otimes \Kh(U_0)\\
g_3&:\Kh(D_1)\otimes \Kh(U_1)\to \Kh(D_3).
\end{align*}
Condition (4) in Definition \ref{def:kftheory} says that 
\begin{align*}
E_\infty(\cA(U_i))&=E_2(\cA(U_i))\cong\Kh(U_i)
\end{align*}
for $i=0,2$, which implies as in the previous proof that  the quasi-isomorphism class $\cA(U_i)$ contains the trivial $\Kh(U_i)$-complex $(\Kh(U_i),id)$ for $i=0,2$. It follows immediately that there exists a morphism 
\begin{equation}\label{eqn:u0u12}\cA(U_0)\to\cA(U_2)\end{equation} 
which agrees on $E_2$ with the standard isomorphism 
\begin{equation*}\label{eqn:ku0u12}g_2:\Kh(U_0)\to\Kh(U_2)\end{equation*}
associated to the Reidemeister II move relating these two diagrams of the unlink. Let $f_1,$ $f_2,$ and $f_3$ be representatives of the morphisms in (\ref{eqn:du12}), (\ref{eqn:u0u12}), and (\ref{eqn:du22}), respectively. Then the composition \[f_3\circ (id\otimes f_2)\circ f_1\] from a representative of $\cA(D_2)$ to a representative of $\cA(D_3)$ is a morphism which agrees on $E_2$ with the composition \begin{equation*}\label{eqn:maponkh}g_3\circ (id\otimes g_2)\circ g_1:\Kh(D_2)\to\Kh(D_3),\end{equation*} and this latter composition is equal to the isomorphism from $\Kh(D_2)$ to $\Kh(D_3)$ associated to the Reidemeister II move. It follows  that the morphism in (\ref{eqn:mor22}) exists.
\end{proof}

\begin{lemma}
\label{lem:R3}
Suppose $D'$ is obtained from $D$ by a Reidemeister III move. Then there exists a morphism \[\cA(D)\to\cA(D')\] which agrees on $E_2$ with the standard isomorphism from $\Kh(D)$ to $\Kh(D')$. \end{lemma}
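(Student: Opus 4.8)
\textbf{Proof proposal for Lemma \ref{lem:R3}.}

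The plan is to mimic the strategy used for Lemmas \ref{lem:R1} and \ref{lem:R2}: realize the Reidemeister III move as a composite of elementary movies in which the only genuinely new topological change happens among unknotted, unlinked components, so that the desired morphism can be built from the disjoint-union axiom (condition (3)), the collapse axiom at the unlink (condition (4)), and the handle-attachment morphisms supplied by Proposition \ref{prop:02handle}. Concretely, I would exhibit a chain of diagrams $D=D_1, D_2, \dots, D_{k}=D'$ (drawn in an accompanying figure) whose consecutive arrows are: first a sequence of $0$-handle attachments introducing a small collection of crossingless unknots in the region where the Reidemeister III move will take place; then a sequence of Reidemeister I and II moves that ``slide'' these trivial circles into position so that the third strand of the triple point is now provided by (an arc isotopic into) one of these unknotted components; then the Reidemeister III move itself, which by the repositioning is an $R3$ move occurring in the split sublink, i.e.\ of the form $D_i = E \sqcup V_{\text{before}}$, $D_{i+1} = E\sqcup V_{\text{after}}$ with $V_{\text{before}}, V_{\text{after}}$ diagrams of an unlink; and finally the inverse handle/Reidemeister moves that undo the auxiliary circles and return us to $D'$. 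That such a movie is equivalent (in the sense of Carter--Saito, Subsection \ref{ssec:funct}) to the single-$R3$ movie is a statement purely about movie moves and is what justifies that the two induce the same map on $\Kh$; I would invoke this the same way Figures \ref{fig:R1} and \ref{fig:R2} are used, recording the equivalence pictorially rather than proving it from scratch.

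Granting that picture, the argument assembles exactly as before. For each arrow that is a $0$-, $1$-, or $2$-handle attachment, Proposition \ref{prop:02handle} gives a morphism of quasi-isomorphism classes agreeing on $E_2$ with the corresponding Khovanov map. For each arrow that is a Reidemeister I or II move among the auxiliary trivial circles, Lemma \ref{lem:R1} or Lemma \ref{lem:R2} applies (these have already been proved). For the central arrow $D_i = E\sqcup V_{\text{before}} \to E\sqcup V_{\text{after}} = D_{i+1}$, I would use condition (3) to pass to $\cA(E)\otimes\cA(V_{\text{before}})$ and $\cA(E)\otimes\cA(V_{\text{after}})$ up to the standard disjoint-union isomorphism on $E_2$; then, since $V_{\text{before}}$ and $V_{\text{after}}$ are unlink diagrams, condition (4) together with Lemma \ref{lem:einfty} shows (just as in the proofs of Propositions \ref{prop:02handle} and Lemma \ref{lem:R1}) that $\cA(V_{\text{before}})$ and $\cA(V_{\text{after}})$ each contain the trivial complex $(\Kh(V_{\bullet}),\id)$, so the standard Khovanov isomorphism $g: \Kh(V_{\text{before}})\to\Kh(V_{\text{after}})$ of the $R3$ move is realized by a morphism $\cA(V_{\text{before}})\to\cA(V_{\text{after}})$. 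Tensoring with $\id_{\cA(E)}$ and composing with the disjoint-union morphisms produces the morphism $\cA(D_i)\to\cA(D_{i+1})$ agreeing on $E_2$ with the $R3$ map on Khovanov homology. Composing all of these elementary morphisms yields a morphism $\cA(D)\to\cA(D')$, and by Lemma \ref{lem:sseqmaps} it agrees on $E_2$ with the composite Khovanov map, which by the movie-move equivalence equals the standard $R3$ isomorphism $\Kh(D)\to\Kh(D')$.

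The main obstacle is the first step: producing an explicit movie realizing the Reidemeister III move, via the introduction of trivial circles and a bounded sequence of $R1$, $R2$, and handle moves, so that the triple point is ultimately resolved within a split unlink summand, and verifying that this movie is Carter--Saito equivalent to the naive one-frame $R3$ movie. This is the place where some care with the pictures is genuinely needed; the rest is bookkeeping formally identical to the $R1$ and $R2$ cases. One clean way to arrange it is to push one of the three strands off to the side, introduce an unknot encircling the region via a $0$-handle plus $R2$ moves, isotope so that a portion of that unknot plays the role of the removed strand, perform the $R3$ move there (now visibly split), and reverse; alternatively one can use the fact that an $R3$ move can be traded, up to $R2$ moves and planar isotopy, for a ``delta move''-type local picture among three arcs which, after pulling two of them apart with the help of an auxiliary circle, becomes a move on an unlink. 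I would present whichever of these admits the simplest figure, then cite Carter--Saito for the equivalence exactly as in the preceding two lemmas, and leave the verification of the movie-move sequence to the reader as is done there.
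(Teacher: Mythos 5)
Your proposal is correct and follows essentially the same strategy as the paper: introduce auxiliary unknots via $0$-handles, slide them into position via (in the paper's figure, three) Reidemeister II moves so that the triple point lies entirely within a split $3$-component unlink $U_a$, perform the Reidemeister III move $U_a\to U_b$ as a morphism $\cA(U_a)\to\cA(U_b)$ obtained from conditions (3) and (4) plus Lemma~\ref{lem:einfty}, and then undo the auxiliary circles by $1$-handles and R2 moves, composing all the resulting morphisms and invoking the Carter--Saito movie-move equivalence. The only point you leave open --- drawing the explicit sequence of diagrams --- is exactly what the paper's Figure~\ref{fig:R3} supplies, and it uses only R2 moves (no R1) to position the three auxiliary circles.
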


\begin{proof}
Consider the  link diagrams  shown in Figure \ref{fig:R3}. The  arrow from $D=D_1$ to $D_2$ represents three $0$-handle attachments; the arrow from $D_2$ to $D_3$ represents a sequence consisting of three Reidemeister II moves; the arrow from $D_3$ to $D_4$ represents a Reidemeister III move;  the  arrow from $D_4$ to $D_5$ represents three $1$-handle attachments; and the arrow from $D_5$ to $D_6=D'$ represents a sequence of three Reidemeister II moves. The movie represented by these thin arrows  is equivalent to the movie from $D$ to $D'$ corresponding to the single Reidemeister III move indicated by the thick arrow. These two movies therefore induce the same map from $\Kh(D)$ to $\Kh(D')$. Thus, to prove Lemma \ref{lem:R2}, it suffices to prove that there exist morphisms 
\begin{align}
\label{eqn:mor123}\cA(D_1)\to\cA(D_2)\\
\label{eqn:mor223}\cA(D_2)\to\cA(D_3)\\
\label{eqn:mor323}\cA(D_3)\to\cA(D_4)\\
\label{eqn:mor423}\cA(D_4)\to\cA(D_5)\\
\label{eqn:mor523}\cA(D_5)\to\cA(D_6)
\end{align}
which agree on $E_2$ with the corresponding maps on Khovanov homology. Since the top left and bottom right arrows in Figure \ref{fig:R3} correspond to handle attachments, the morphisms  in (\ref{eqn:mor123}) and (\ref{eqn:mor423}) exist  by Proposition \ref{prop:02handle}. The top right  and bottom left arrows correspond to sequences of Reidemeister II moves, so the morphisms  in (\ref{eqn:mor223}) and (\ref{eqn:mor523}) exist by Lemma \ref{lem:R2}. It remains to show that the morphism in (\ref{eqn:mor323}) exists. 

\begin{figure}[ht]
\labellist
\hair 2pt
\pinlabel $D=D_1$ at 108 534
\pinlabel $D_2$ at 381 534
\pinlabel $D_3$ at 653 534
\pinlabel $D_4$ at 653 10
\pinlabel $D_5$ at 381 10
\pinlabel $D'=D_6$ at 105 10

\endlabellist
\centering
\includegraphics[height=7cm]{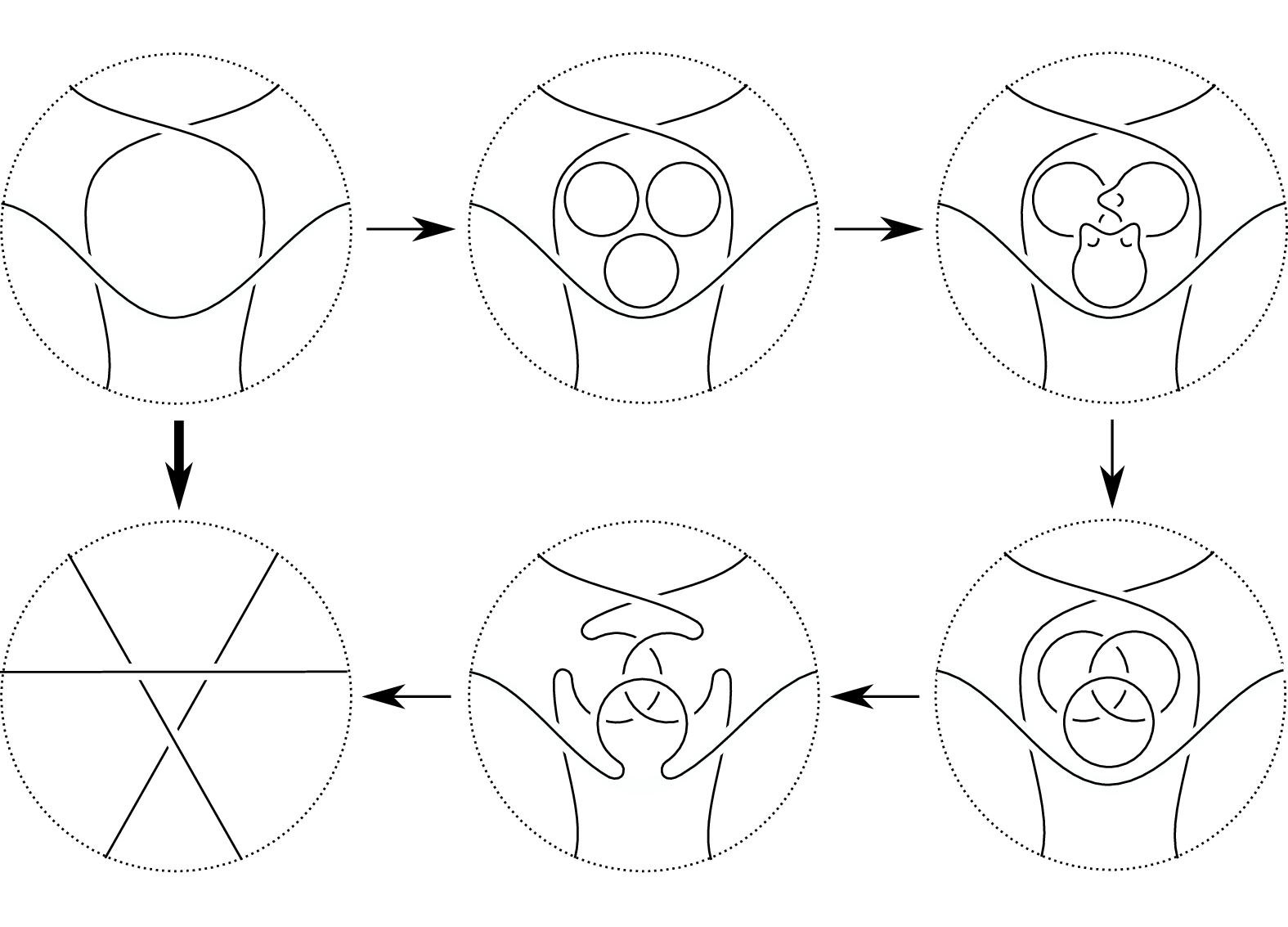}
\caption{ The diagrams $D=D_1,\dots,D_6=D'$. The movie indicated by the thin arrows is equivalent to the movie corresponding to the Reidemeister III move,  indicated by the thick arrow.}
\label{fig:R3}
\end{figure}

Let $U_a$ and $U_b$ be the  $6$-crossing diagrams of the $3$-component unlink  in  $D_3$ and $D_4$, so that $D_3=D_1\sqcup U_a$ and $D_4 = D_1\sqcup U_b$. By condition (3) in Definition \ref{def:kftheory}, there exist morphisms 
\begin{align}
\label{eqn:du13}&\cA(D_3)\to\cA(D_1)\otimes \cA(U_a)\\
\label{eqn:du23}&\cA(D_1)\otimes \cA(U_b)\to \cA(D_4)
\end{align}
which agree on $E_2$ with the standard isomorphisms  
\begin{align*}
g_1&:\Kh(D_3)\to\Kh(D_1)\otimes \Kh(U_a)\\
g_3&:\Kh(D_1)\otimes \Kh(U_b)\to \Kh(D_4).
\end{align*}
Condition (4) in Definition \ref{def:kftheory} says that 
\begin{align*}
E_\infty(\cA(U_i))&=E_2(\cA(U_i))\cong\Kh(U_i)
\end{align*}
for $i=a,b$, which implies as in the previous proof that  the quasi-isomorphism class $\cA(U_i)$ contains the trivial $\Kh(U_i)$-complex $(\Kh(U_i),id)$ for $i=a,b$. As before, it follows immediately that there exists a morphism 
\begin{equation}\label{eqn:u0u13}\cA(U_a)\to\cA(U_b)\end{equation} 
which agrees on $E_2$ with the standard isomorphism 
\begin{equation*}\label{eqn:ku0u13}g_2:\Kh(U_a)\to\Kh(U_b)\end{equation*}
associated to the Reidemeister III move relating these two diagrams of the unlink. Let $f_1,$ $f_2,$ and $f_3$ be representatives of the morphisms in (\ref{eqn:du13}), (\ref{eqn:u0u13}), and (\ref{eqn:du23}), respectively. Then the composition \[f_3\circ (id\otimes f_2)\circ f_1\] from a representative of $\cA(D_3)$ to a representative of $\cA(D_4)$ is a morphism which agrees on $E_2$ with the composition \begin{equation*}\label{eqn:maponkh}g_3\circ (id\otimes g_2)\circ g_1:\Kh(D_3)\to\Kh(D_4),\end{equation*} and this latter composition is equal to the isomorphism from $\Kh(D_3)$ to $\Kh(D_4)$ associated to the Reidemeister III move. It follows  that the morphism in (\ref{eqn:mor323}) exists.
\end{proof}

As mentioned in the previous section, the proof  that reduced Khovanov-Floer theories are functorial proceeds in a virtually identical manner; we leave it to the reader to fill in the details.

\section{Examples of Khovanov-Floer theories}
\label{sec:kfthys}

In the first four subsections, we verify that the  spectral sequence constructions of Kronheimer-Mrowka, Ozsv{\'a}th-Szab{\'o},  Szab{\'o}, and Lee define Khovanov-Floer theories, proving Theorems \ref{thm:kmoskf}, \ref{thm:szabokf}, and \ref{thm:leekf}.  All four verifications are formally very similar. Theorem \ref{thm:kfunct2} or its reduced analogue then imply that the associated spectral sequences define functors from $\link$ or $\link_\infty$ to $\spect$. As mentioned in the introduction, this also provides a new proof that Rasmussen's invariant is indeed a knot invariant.

In the fifth subsection, we describe some new deformations of the Khovanov complex which give rise to Khovanov-Floer theories, and potentially to new link type invariants (they of course give rise to functorial link type invariants, the question is whether these invariants are  different from Khovanov homology).


\subsection{Kronheimer and Mrowka's spectral sequence}
\label{ssec:kmss} 
Suppose $D\subset S^2:=\R^2\cup\{\infty\}$ is a diagram for an oriented link $L\subset S^3:=\R^3\cup\{\infty\}$, with crossings labeled $1,\dots,n$. For each $I\in\{0,1\}^n$, let $L_I\subset S^3$ be a link whose projection to $\R^2$ is equal to  $D_I$, and which agrees with $L$ outside of $n$ disjoint balls containing the ``crossings" of $L$. For every pair $I<_1 I'$ of immediate successors, there is a standard $1$-handle cobordism \[S_{I,I'}\subset S^3\times[0,1]\] from $L_I$ to $L_{I'}$ which is trivial outside the product of one of these balls with the interval. For any pair $I<_kJ$ of tuples differing in $k$ coordinates, choose a sequence $I = I_0<_1 I_1 <_1 \dots <_1 I_k = I'$ of immediate successors. Then the composition \[S_{I,J} = S_{I_{k-1},I_k}\circ\dots\circ S_{I_0,I_1}\] defines a cobordism \[S_{I,J}\subset S^3\times[0,1]\] from $L_I$ to $L_J$ which is independent of the sequence above, up to proper isotopy. 

Given some  auxiliary data $\adata$ (including  a host of metric and perturbation data) Kronheimer and Mrowka construct \cite{km3} a  chain complex $(C^\adata(D),d^\adata(D))$, where \[C^\adata(D) = \bigoplus_{I\in\{0,1\}^n} \CI(\bar L_I)\] and the differential $d^\adata(D)$ is a sum of maps \[d_{I,J}: \CI(\bar L_I)\to \CI(\bar L_J)\] over all pairs $I\leq J$ in $\{0,1\}^n$. Here, $\CI(\bar L_I)$ refers to the unreduced  singular instanton Floer chain group of $\bar L_I$ over $\F$.   The map $d_{I,I}$ is the    instanton Floer differential on $\CI(\bar L_I)$, defined, \emph{very roughly speaking}, by counting certain instantons on $S^3\times \R$ with singularities along $\bar L_I \times \R$. More generally, $d_{I,J}$ is defined by counting points in parametrized moduli spaces of instantons on $S^3\times \R$ with singularities along $S_{I,J}$, over a family of metrics and perturbations. We are abusing notation  here, of course, as the  vector spaces $\CI(\bar L_I)$ and  maps $d_{I,J}$ depend on $\adata$.

Kronheimer and Mrowka prove in \cite{km3} that the homology of this complex computes the unreduced singular instanton Floer homology of $\bar L$, as below.

\begin{theorem}{\em (Kronheimer-Mrowka \cite{km3})}
The homology $H_*(C^\adata(D),d^\adata(D))$ is isomorphic to $\HI(\bar L)$.
\end{theorem}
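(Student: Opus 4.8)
The plan is to induct on the number $n$ of crossings of $D$, exploiting the mapping-cone structure of the cube of resolutions together with Kronheimer and Mrowka's unoriented skein exact triangle in singular instanton homology.

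When $n=0$ the cube $\{0,1\}^{0}$ is a point and $(C^{\adata}(D),d^{\adata}(D))$ is, by construction, simply the singular instanton chain complex of $\bar L$, whose homology is $\HI(\bar L)$; this is the base case. For $n\geq 1$, I would first single out one crossing, say the $n$th, and let $D_{0}$ and $D_{1}$ be its two resolutions, each a diagram with $n-1$ crossings. Write a tuple of $\{0,1\}^{n}$ as $I'a$ with $I'\in\{0,1\}^{n-1}$ and $a\in\{0,1\}$. After restricting the auxiliary data --- harmless, since by the independence of the construction on that data up to quasi-isomorphism recalled in Section~\ref{sec:defs} the homology is unaffected --- the $a=0$ summands of $C^{\adata}(D)$ form the cube complex of $D_{0}$, the $a=1$ summands form that of $D_{1}$, and the part of $d^{\adata}(D)$ raising $a$ is a single map
\[
f=\bigoplus_{I'\leq J'}d_{I'0,\,J'1}\colon C^{\adata}(D_{0})\longrightarrow C^{\adata}(D_{1}).
\]
Over $\F=\Z/2\Z$ there are no signs, so $d^{\adata}(D)\circ d^{\adata}(D)=0$ unpacks precisely into the assertion that the two sub-cubes are complexes and that $f$ is a chain map; hence $C^{\adata}(D)$ is the mapping cone of $f$, and the long exact sequence of the cone presents $H_{*}(C^{\adata}(D))$ in terms of $H_{*}(C^{\adata}(D_{0}))$ and $H_{*}(C^{\adata}(D_{1}))$, which by the inductive hypothesis are $\HI(\bar L_{0})$ and $\HI(\bar L_{1})$.

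To close the induction I would invoke the unoriented skein exact triangle, in the form that identifies $\HI(\bar L)$ with the homology of the mapping cone of a chain-level representative of the saddle cobordism map $\HI(\bar L_{0})\to\HI(\bar L_{1})$ joining the two resolutions, and then identify $f$ up to chain homotopy with the cube-level realization of that map. The latter step is a gluing/composition argument for the instanton counts $d_{I'0,J'1}$ along the composite cobordisms $S_{I'0,J'1}$: their leading term in the cube filtration is the elementary saddle at the $n$th crossing, and the remaining, filtration-lower, terms are absorbed into a homotopy. Matching the two mapping cones then yields $H_{*}(C^{\adata}(D),d^{\adata}(D))\cong\HI(\bar L)$.

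The real work lies not in this bookkeeping but in the gauge-theoretic inputs: proving the unoriented skein exact triangle in the \emph{singular} (orbifold) setting, which requires carrying out Floer's surgery-exact-triangle machinery --- transversality and genericity, compactness and bubbling analysis, gluing, and the handling of reducible connections --- for connections singular along embedded links and their cobordisms, and establishing the compatibility statement that the face maps of the cube glue to the cobordism maps entering that triangle. I expect this analytic package to be the main obstacle; granting it, the inductive scheme above is routine.
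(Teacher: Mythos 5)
The paper does not prove this theorem; it is stated as a citation of Kronheimer and Mrowka \cite{km3}, so there is no in-paper proof to compare against. That said, your sketch is a fair reconstruction of the shape of their argument: isolate a crossing, realize the cube complex as the mapping cone of a filtered chain map $f\colon C^{\adata}(D_0)\to C^{\adata}(D_1)$ between the cube complexes of the two resolutions, appeal to the inductive hypothesis for those, and match the cone of $f$ against the cone in the unoriented skein exact triangle via a chain-level identification of $f$ with a saddle-cobordism map. The inputs you flag as hard---the skein exact triangle for singular instanton homology and the compatibility of the composite-cobordism counts $d_{I'0,\,J'1}$ with the maps in that triangle, up to filtered homotopy---are indeed where the substance of \cite{km3} lies. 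Be aware, though, that their proof of the exact triangle is not a formal transplant of Floer's surgery-triangle machinery into an orbifold setting: it is a construction adapted to singular connections within the $\sharp$ framework, which is engineered precisely so that no reducible flat connections appear, and exactness is extracted via an explicit chain homotopy equivalence rather than merely by invoking genericity and gluing.

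One place where your wording overstates matters is the claim that ``the long exact sequence of the cone presents $H_{*}(C^{\adata}(D))$ in terms of'' the homologies of the two sub-cubes. It does not: an exact sequence with two known terms does not determine the third. What closes the induction is precisely the chain-level matching of $f$ with the saddle map, yielding a filtered quasi-isomorphism between the two cones; you acknowledge this in your next sentence, but it is the entire content of the inductive step, not a tidying remark, and it is where the independence-of-auxiliary-data statements and the gluing theorems for the parametrized moduli spaces over faces of the metric cube are actually used.
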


Note that the complex $(C^\adata(D),\partial^\adata(D))$  is a filtered complex with respect to the filtration coming from the  \emph{homological grading} defined by \[\mathbf{h}(x) = I_1+\dots +I_n-n_-\]  for   $x\in \CI(\bar L_I)$. Since $d_{I,I}$ is the instanton Floer differential, the $E_1$ page of the associated spectral sequence is given by \[E_1(C^\adata(D)) = \bigoplus_{I\in\{0,1\}^n}\HI(\bar L_I).\] Moreover,  the spectral sequence differential $d_1(C^\adata(D))$ is the   sum of the induced maps \[(d_{I,I'})_*:\HI(\bar L_I)\to \HI(\bar L_{I'})\] over all pairs $I<_1 I'$.

In \cite[Section 8]{km3}, Kronheimer and Mrowka establish  isomorphisms \[\Lambda^*V(D_I)\cong \HI(\bar L_I)\] which extend to an isomorphism of chain complexes \[(\cKh(D),d) \to (E_1(C^\adata(D)), d_1(C^\adata(D)))\] that  gives rise to an isomorphism \[q^\adata:\Kh(D)\to E_2(C^\adata(D)).\] 
Moreover, they show that for any two sets of data $\adata$ and $\adata'$, there exists a filtered chain map \[f:C^\adata(D)\to C^{\adata'}(D)\] such that \[E_2(f)=q^{\adata'}\circ (q^\adata)^{-1}.\] This is essentially the content of  \cite[Proposition 8.11]{km3} and the discussion immediately following it. In other words, Kronheimer and Mrowka's construction assigns to every   link diagram $D$ a quasi-isomorphism class of $\Kh(D)$-complexes, with respect to the homological grading on $\Kh(D)$. In fact, we claim the following.

\begin{proposition}
\label{prop:kmss}
Kronheimer-Mrowka's construction is a Khovanov-Floer theory.
\end{proposition}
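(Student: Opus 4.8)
The plan is to verify the four conditions in Definition \ref{def:kftheory} for Kronheimer and Mrowka's construction $D \mapsto \cA_{KM}(D)$, where $\cA_{KM}(D)$ is the quasi-isomorphism class of the $\Kh(D)$-complex $(C^\adata(D), q^\adata)$. The preceding discussion has already established that this assignment is well-defined on quasi-isomorphism classes, so it remains only to produce, for each of the structural moves, a filtered chain map that agrees on $E_2$ with the relevant Khovanov map. The guiding principle in every case is that Kronheimer and Mrowka's complex is built functorially out of cube-of-resolutions data and cobordism-induced maps on singular instanton homology $I^\sharp$, and each elementary move on diagrams corresponds to a concrete geometric cobordism or change of auxiliary data which induces the desired filtered map; one then only needs to identify its effect on $E_1$ (hence $E_2$) with the corresponding operation in the Khovanov cube.

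The key steps, in order, are as follows. \textbf{(1) Planar isotopy.} A planar isotopy taking $D$ to $D'$ induces a bijection of crossings and hence an identification of the cubes of resolutions, compatible with the $1$-handle cobordisms $S_{I,I'}$; this gives a filtered chain isomorphism $C^\adata(D) \to C^{\adata'}(D')$ (after transporting the auxiliary data along the isotopy) whose effect on $E_1$ is the tautological identification of the $I^\sharp$-groups of isotopic resolution links, which matches the induced map on the Khovanov cube. \textbf{(2) Diagrammatic $1$-handle attachment.} If $D'$ is obtained from $D$ by a $1$-handle, then for each resolution $I$ the resolution diagrams differ by a merge or split, i.e.\ by an embedded $1$-handle cobordism of resolution links; the induced cobordism maps on $I^\sharp(\bar L_I)$ assemble to a filtered chain map $C^\adata(D) \to C^{\adata'}(D')$, and under Kronheimer--Mrowka's identification $\Lambda^* V(D_I) \cong I^\sharp(\bar L_I)$ from \cite[Section 8]{km3} these cobordism maps realize exactly the merge and split maps defining the $1$-handle map on $\cKh$ (this compatibility is precisely the kind of computation Kronheimer and Mrowka carry out to identify $E_1$ with the Khovanov complex in the first place). \textbf{(3) Disjoint union.} For $D \sqcup D'$, every resolution link is a split link $\bar L_I \sqcup \bar L'_{I'}$, and $I^\sharp$ of a split link is the tensor product of the $I^\sharp$'s of its components (with the corresponding Künneth identification of the $E_1$ pages and of the cube differentials respecting the tensor decomposition); this yields a filtered chain isomorphism $C^\adata(D \sqcup D') \to C^\adata(D) \otimes C^{\adata'}(D')$ whose effect on $E_2$ is the standard isomorphism $\Kh(D \sqcup D') \to \Kh(D) \otimes \Kh(D')$. \textbf{(4) Unlink collapse.} For $D$ a diagram of the $m$-component unlink, one must show $E_2(\cA_{KM}(D)) = E_\infty(\cA_{KM}(D))$; the cleanest route is to note that $\Kh(D)$ of the unlink has homology supported so that there is no room for higher differentials, which forces the spectral sequence to degenerate. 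More precisely, $E_\infty(C^\adata(D))$ is associated graded to $H_*(C^\adata(D)) \cong I^\sharp(\bar L)$, and for the $m$-component unlink $\bar L$ one has $\dim I^\sharp(\bar L) = 2^m = \dim \Kh(D) = \dim E_2(\cA_{KM}(D))$ (using Kronheimer and Mrowka's computation of $I^\sharp$ of the unlink); since $\dim E_{i+1} \le \dim E_i$ always, equality of the dimensions of $E_2$ and $E_\infty$ forces $E_2 = E_\infty$.

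The main obstacle is condition (2): carefully checking that the cobordism maps on $I^\sharp$ induced by the $1$-handle attachment, when transported through Kronheimer--Mrowka's isomorphisms $\Lambda^* V(D_I) \cong I^\sharp(\bar L_I)$, really do coincide with the combinatorial merge/split maps of the Khovanov differential (and not merely agree up to an automorphism of each summand). Fortunately, this is essentially forced: the $1$-handle map and the cube differential maps $d_{I,I'}$ are both given by $1$-handle cobordism maps on singular instanton homology, differing only in which handle is attached, so the required compatibility follows from the very naturality and TQFT-type properties of the cobordism maps that Kronheimer and Mrowka use in \cite[Section 8]{km3} to identify the $E_1$ page with $\cKh(D)$. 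One additional technical point worth flagging is bookkeeping of the auxiliary data $\adata$: in each move one must transport or extend the metric/perturbation data along the relevant cobordism, but since any two choices give quasi-isomorphic complexes (as recalled just above the Proposition, via \cite[Proposition 8.11]{km3}), these choices do not affect the induced morphism of quasi-isomorphism classes, so one is free to make whatever choice is convenient. The reduced case (Ozsv\'ath--Szab\'o, handled in the next subsection) will run along the same lines with connected sum replacing disjoint union, as the footnote to Theorem \ref{thm:kmoskf} anticipates.
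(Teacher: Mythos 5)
Your proposal follows essentially the same route as the paper: verify conditions (1)--(4) by identifying, for each elementary move, a concrete filtered chain map built from the cube-of-resolutions structure whose $E_1$ effect matches the corresponding Khovanov map via Kronheimer--Mrowka's identifications $\Lambda^*V(D_I)\cong I^\sharp(\bar L_I)$. A few points of comparison. For condition (2), you correctly identify that the $1$-handle map is assembled from cobordism maps, but the paper is more explicit about \emph{why} this assembly is automatically a filtered chain map: one introduces an auxiliary diagram $\tilde D$ with one extra crossing whose $0$- and $1$-resolutions are $D$ and $D'$, and realizes $C^{\tilde\adata}(\tilde D)$ as the mapping cone of the map $T$ given by the ``vertical'' components of the cube differential. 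This is the clean mechanism by which one gets a filtered chain map for free, and it is worth making it explicit. For condition (3), you are too quick: the excision argument of \cite[Subsection 5.5]{km3} gives quasi-isomorphisms $\CI(\bar L_I\sqcup\bar L'_J)\to \CI(\bar L_I)\otimes\CI(\bar L'_J)$ at the level of each resolution, i.e.\ the degree-$0$ pieces, but these do not automatically assemble into a filtered chain map on the total complexes. The paper explicitly flags that the higher filtration-degree components must be constructed separately, by counting instantons on the excision cobordism over higher-dimensional families of metrics and perturbations in the style of \cite[Section 6]{km3}. Your claim of a ``filtered chain isomorphism'' elides this entirely, and also overstates the conclusion (one gets a filtered chain map, not a chain isomorphism). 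For condition (4), you offer both the paper's one-line argument (everything lives in a single homological filtration degree, so there is no room for $d_i$ with $i\ge 2$) and a heavier dimension count using $\dim I^\sharp$ of the unlink; only the first is needed, and it avoids importing that computation.
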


\begin{proof}

Let $\cA(D)$ denote the quasi-isomorphism class of $\Kh(D)$-complexes assigned to $D$ in Kronheimer and Mrowka's construction. To prove the proposition, we   simply check   that $\cA$ satisfies conditions (1)-(4) of Definition \ref{def:kftheory}. 

For condition (1), a planar isotopy $\phi$ taking $D$ to $D'$ determines a canonical filtered (in fact, grading-preserving) chain isomorphism   \[\psi_\phi:C^\adata(D) \to C^{\adata'}(D'),\] where $\adata$ is the data pulled back from $\adata'$ via $\phi$. Furthermore, it is clear that $E_1(\psi_\phi)$ agrees with the standard map \[F_\phi:\CKh(D)\to\CKh(D')\] associated to this isotopy in Khovanov homology, with respect to the natural identifications of the various chain complexes. It follows that $\psi_\phi$ represents a morphism from $\cA(D)$ to $\cA(D')$ which agrees on $E_2$ with the map induced on Khovanov homology, as desired.

For condition (2), suppose $D'$ is obtained from $D$ via a diagrammatic 1-handle attachment. Then there is a diagram $\tilde D$ with one more crossing than $D$ and $D'$, such that $D$ is the $0$-resolution of $\tilde D$ at this new crossing $c$ and $D'$ is the $1$-resolution. For some choice of data $\tilde\adata$, we can realize the complex $C^{\tilde\adata}(\tilde D)$ as the mapping cone of a degree 0 filtered chain map \[T:C^{\adata}(D)\to C^{\adata'}(D'),\] where $\adata$ and $\adata'$ are appropriate restrictions of $\tilde\adata$. This map $T$ is given by the direct sum \[ T = \bigoplus_{I\leq J\in \{0,1\}^{n}} d_{I\times\{0\},J\times\{1\}},\] of   components of the differential $d^{\tilde\adata}(\tilde D)$. (We are thinking of $c$ as the $(n+1)^{\rm st}$ crossing of $\tilde D$.) Then \[E_1(T):E_1(C^{\adata}(D))\to E_1(C^{\adata'}(D'))\] is given by the direct sum of the maps \[(d_{I\times\{0\},I\times\{1\}})_*:\HI( \bar L_I)\to\HI( \bar L'_I)\] over all $I\in \{0,1\}^n$.  It follows from \cite[Proposition 8.11]{km3} that these maps  agree with the maps \[\Lambda^*V(D_I)\to \Lambda^*V(D'_I)\] associated to  the 1-handle addition, via the natural identifications 
 \begin{align*}
 \Lambda^*V(D_I)&\cong\HI(\bar L_I)\\
  \Lambda^*V(D'_I)&\cong\HI(\bar L'_I)
 \end{align*} described above. It follows that $E_1(T)$ agrees with the chain map \[\cKh(D)\to\cKh(D')\] associated to  the 1-handle attachment, and, hence, that  $T$ represents a morphism from $\cA(D)$ to $\cA(D')$ which agrees on $E_2$ with the map induced on  Khovanov homology, as desired.

For condition (3), it suffices to show that for some choices of data $\adata$, $\adata'$, $\adata''$, there is a degree 0 filtered chain map \begin{equation}\label{eqn:fcm}C^{\adata''}(D\sqcup D')\to C^{\adata}(D)\otimes C^{\adata'}(D')\end{equation} which agrees on $E_2$ with the standard isomorphism \[\Kh(D\sqcup D') \to \Kh(D)\otimes \Kh(D').\]  From the construction in \cite[Subsection 5.5]{km3},  there is an excision cobordism  which gives rise to a quasi-isomorphism \begin{equation}\label{eqn:qi}\CI(\bar L_I \sqcup \bar L'_J) \to \CI(\bar L_I)\otimes \CI(\bar L'_J)\end{equation} for some such data and each pair $I,J\in\{0,1\}^n$. Moreover,  since the induced isomorphism \[\HI(\bar L_I \sqcup \bar L'_J) \to \HI(\bar L_I)\otimes \HI(\bar L'_J)\]  is natural with respect to ``split" cobordisms (see \cite[Corollary 5.9]{km3}),  it follows  that this  isomorphism agrees with the isomorphism 
 \[\Lambda^*V(D_I\sqcup D_J) \to \Lambda^* V(D_I)\otimes \Lambda^* V(D_J)\] modulo the relevant natural identifications. The proof is then complete as long as one can show that the chain maps in (\ref{eqn:qi}) arise as the degree 0 components of a degree 0 filtered chain map as in (\ref{eqn:fcm}). Although we do not give details, this can be arranged, defining the higher degree components of the chain map by counting instantons on the excision cobordism over higher dimensional families of metrics and perturbations, mimicking the arguments in \cite[Section 6]{km3}. 
 
 
For condition (4), suppose $D$ is a diagram of the unlink. Then its Khovanov homology is supported in homological degree 0. Hence, the spectral sequence collapses at the $E_2$ page. In particular, $E_2(\cA(D)) = E_\infty(\cA(D))$, as desired.
\end{proof}


\subsection{Ozsv{\'a}th and Szab{\'o}'s spectral sequence}

Suppose $D$, $L$, and the $L_I$ are exactly as in the previous subsection, except that they are based at $\infty$. Let $a_j$ be an arc in a local neighborhood of the $j$th crossing of $D$ as shown in Figure \ref{fig:arc}, and let $b_j$ be a lift of $a_j$ to an arc in $S^3$ with endpoints on $L$. The arc $b_j$ lifts to a closed curve $\beta_j\subset-\Sigma(L)$, where $\Sigma(L)$ is the double branched cover of $S^3$ branched along $L$. There is a natural framing on the link \[\mathbb{L}=\beta_1\cup\dots\cup \beta_n\subset -\Sigma(L)\] such that $-\Sigma(L_I)$ is obtained by performing $I_j$-surgery on $\beta_j$ for each $j=1,\dots,n$, for all $I\in\{0,1\}^n$.

 \begin{figure}[!htbp]
 \labellist 
\small\hair 2pt  
\pinlabel $a_j$ at 11 27 
\endlabellist 
\begin{center}
\includegraphics[width=2.1cm]{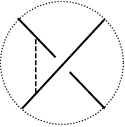}
\caption{\quad The arc $a_j$ near the $j$th crossing, shown as a dashed segment.}
\label{fig:arc}
\end{center}
\end{figure} 

Given some auxiliary data $\adata$ (including a pointed Heegaard multi-diagram  \emph{subordinate} to the framed link $\mathbb{L}$ and a host of complex-analytic data), Ozsv{\'a}th and Szab{\'o} construct \cite{osz12} a  chain complex $(C^\adata(D),d^\adata(D))$, where \[C^\adata(D) = \bigoplus_{I\in\{0,1\}^n} \CF(-\Sigma(L_I))\] and the differential $d^\adata(D)$ is a sum of maps \[d_{I,J}:\CF(-\Sigma(L_I))\to\CF(-\Sigma(L_{J}))\] over all pairs $I\leq J$ in $\{0,1\}^n$. Here, $\CF(-\Sigma(L_I))$ refers to the Heegaard Floer chain group of $-\Sigma(L_I)$. The map $d_{I,I}$ is the usual Heegaard Floer differential on $\CF(-\Sigma(L_I))$, defined by counting pseudo-holomorphic disks in the symmetric product of a Riemann surface. More generally, $d_{I,J}$ is defined by counting pseudo-holomorphic polygons. Again, we are abusing notation here, as the  vector spaces $\CF(-\Sigma(L_I))$ and  maps $d_{I,J}$ depend on $\adata$. 

Ozsv{\'a}th and Szab{\'o} prove in \cite{osz12} that the homology of this complex computes the Heegaard  Floer homology of $-\Sigma(L)$; that is:

\begin{theorem}
The homology $H_*(C^\adata(D),d^\adata(D))$ is isomorphic to $\HF(-\Sigma(L))$.
\end{theorem}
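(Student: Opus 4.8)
The plan is to identify the complex $(C^\adata(D),d^\adata(D))$ with an iterated mapping cone assembled from the surgery exact triangle in Heegaard Floer homology, and then to evaluate this cone by induction on the number $n$ of crossings of $D$.

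First I would recall the relevant input from \cite{osz12}: if $\beta$ is a framed knot in a closed oriented $3$-manifold $Y$, then the three surgeries form a triad $(Y,Y_0,Y_1)$, and there is a quasi-isomorphism from $\CF(Y)$ to the mapping cone of a chain map $\CF(Y_0)\to\CF(Y_1)$ which is induced by counting holomorphic triangles in a Heegaard triple subordinate to $\beta$. The geometric point underlying the definition of the framed link $\mathbb L$ is precisely that, in the branched double cover, a crossing of $D$ together with its $0$- and $1$-resolutions lifts to such a triad with $Y=-\Sigma(L)$, $Y_0=-\Sigma(L_0)$, $Y_1=-\Sigma(L_1)$, and $\beta$ the curve $\beta_j$.

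For the inductive step, I would split the cube $\{0,1\}^n=\{0,1\}^{n-1}\times\{0,1\}$ by its last coordinate. Let $D^0$ and $D^1$ be the diagrams obtained by taking the $0$- and $1$-resolution at the $n$th crossing of $D$; each has $n-1$ crossings, and the framed link used to construct $C^\adata(D^0)$ (resp.\ $C^\adata(D^1)$) is $\beta_1\cup\dots\cup\beta_{n-1}\subset-\Sigma(L^0)$ (resp.\ $\subset-\Sigma(L^1)$). Choosing the pointed Heegaard multi-diagram for $D$ so that it restricts to multi-diagrams subordinate to these smaller framed links, one reads off from the definition of $d^\adata(D)$ that
\[
C^\adata(D)\;=\;\operatorname{Cone}\bigl(\Phi\colon C^\adata(D^0)\to C^\adata(D^1)\bigr),
\]
where $\Phi$ is the sum of the components $d_{I\times\{0\},\,J\times\{1\}}$; that $d^\adata(D)^2=0$ forces $\Phi$ to be a chain map. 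By the inductive hypothesis $H_*(C^\adata(D^0))\cong\HF(-\Sigma(L^0))$ and $H_*(C^\adata(D^1))\cong\HF(-\Sigma(L^1))$, and the base case $n=0$ is immediate, since then $C^\adata(D)=\CF(-\Sigma(L))$ with $d^\adata(D)$ the Heegaard Floer differential.

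The crux --- and the step I expect to be the main obstacle --- is to show that, under these identifications, the map induced by $\Phi$ on homology agrees with the triangle map of the surgery exact sequence for $\beta_n\subset-\Sigma(L)$, so that $\operatorname{Cone}(\Phi)$ computes $\HF(-\Sigma(L))$. This requires the full holomorphic-polygon machinery: the $A_\infty$ associativity (and higher) relations governing counts of holomorphic polygons in the complete Heegaard multi-diagram subordinate to $\mathbb L$, together with a stabilization argument showing that all the exact-triangle chain maps and the null-homotopies witnessing exactness at the various sub-cubes can be realized simultaneously by polygon counts in one coherent multi-diagram. Granting this, the exact triangle identifies $\operatorname{Cone}(\Phi)$ with $\CF(-\Sigma(L))$ up to quasi-isomorphism, completing the induction; one should also check that different choices of auxiliary data $\adata$ change the answer only up to isomorphism, which follows from the analogous comparison maps for the exact triangle.
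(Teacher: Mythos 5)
This theorem is not proved in the paper under review; it is quoted directly from Ozsv\'ath--Szab\'o \cite{osz12}, where it is the structural result underlying their link surgery spectral sequence. Your sketch does reproduce, in broad outline, the strategy of that reference: interpret the cube $\{0,1\}^n$ as an iterated mapping cone built from surgery triads in the branched double cover, and argue by iterating the surgery exact triangle.

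There is, however, a logical gap in the way you frame the crux. You propose to show that the map induced by $\Phi$ \emph{on homology} agrees with the exact-triangle map, ``so that $\operatorname{Cone}(\Phi)$ computes $\HF(-\Sigma(L))$.'' That inference is not valid: two chain maps inducing the same map on homology need not have quasi-isomorphic cones. What the inductive step actually requires is a \emph{chain-level} homotopy-commutative square
\[
 \xymatrix@C=36pt@R=25pt{
C^\adata(D^0) \ar[r]^-{\Phi} \ar[d]_{\alpha_0} & C^\adata(D^1) \ar[d]^{\alpha_1} \\
\CF(-\Sigma(L^0)) \ar[r]_-{f} & \CF(-\Sigma(L^1))
}
\]
in which $\alpha_0,\alpha_1$ are the quasi-isomorphisms furnished by the inductive hypothesis and $f$ is the triangle map, together with an explicit null-homotopy of $\alpha_1\circ\Phi - f\circ\alpha_0$; only then does one obtain a quasi-isomorphism $\operatorname{Cone}(\Phi)\to\operatorname{Cone}(f)$, after which the strong form of the surgery exact triangle (a quasi-isomorphism $\CF(-\Sigma(L))\to\operatorname{Cone}(f)$, not merely a long exact sequence) closes the induction. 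You do gesture at the right ingredients --- the coherent pointed multi-diagram, the $A_\infty$-type associativity relations for polygon counts, and the null-homotopies --- but your plan should be reformulated so that the induction is carried out at the level of filtered chain homotopy equivalences rather than on homology; that is precisely what the holomorphic-polygon machinery of \cite{osz12} is set up to produce.
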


As in the previous subsection, this complex $(C^\adata(D),\partial^\adata(D))$  is filtered with respect to the obvious homological grading. Since $d_{I,I}$ is the Heegaard Floer differential, the $E_1$ page of the associated spectral sequence is given by \[E_1(C^\adata(D)) = \bigoplus_{I\in\{0,1\}^n}\HF(-\Sigma(L_I)).\] Moreover,  the spectral sequence differential $d_1(C^\adata(D))$ is the   sum of the induced maps \[(d_{I,I'})_*:\HF(-\Sigma(L_I))\to\HF(-\Sigma(L_{I'}))\] over all pairs $I<_1 I'$.

Below, we argue that Ozsv{\'a}th and Szab{\'o}'s construction  assigns to $D$ a quasi-isomorphism class of $\Khr(D)$-complexes.

In general, the Heegaard Floer homology of a 3-manifold $Y$ admits an action by $\Lambda^*H_1(Y)$. For each $I\in\{0,1\}^n$,  the Floer homology $\HF(-\Sigma(L_I))$ is a  free module over \[\Lambda^*H_1(-\Sigma(L_I))\] of rank one, generated by the  unique element  in the top Maslov grading. In particular, there is a  canonical identification \begin{equation}\label{eqn:canonic}\HF(-\Sigma(L_I))\cong\Lambda^*H_1(-\Sigma(L_I)).\end{equation} Suppose $x$ is the component of $D_I$ containing the basepoint $\infty$. Given any other component $y$, let $\eta_{x,y}$ be an arc with endpoints on $L_I$ which projects to an arc from $x$ to $y$. The map \[V(D_I)/(x)\to H_1(-\Sigma(L_I))\] which sends a component $y$ to the homology class of the lift of $\eta_{x,y}$ to the branched double cover clearly gives rise to an isomorphism \[\Lambda^*(V(D_I)/(x)) \to \HF(-\Sigma(L_I))\] via the identification in (\ref{eqn:canonic}). Moreover, Ozsv{\'a}th and Szab{\'o} show that the direct sum of these isomorphisms  gives rise to an isomorphism of chain complexes \[(\cKhr(D),d) \to (E_1(C^\adata(D)), d_1(C^\adata(D))).\] This isomorphism then gives rise to an isomorphism \[q^\adata:\Khr(D)\to E_2(C^\adata(D)).\] It follows from the work in \cite{bald7,lrob3} and  naturality properties of the $\Lambda^*H_1$-action that for any two sets of data $\adata$ and $\adata'$, there exists a filtered chain map \[f:C^\adata(D)\to C^{\adata'}(D)\] such that \[E_2(f)=q^{\adata'}\circ (q^\adata)^{-1}.\] This shows that Ozsv{\'a}th and Szab{\'o}'s construction assigns to every based  link diagram $D$ a quasi-isomorphism class of $\Khr(D)$-complexes, with respect to the homological grading on $\Khr(D)$. In fact, we claim the following.

\begin{proposition}
\label{prop:osss}
Ozsv{\'a}th and Szab{\'o}'s  construction is a reduced Khovanov-Floer theory.
\end{proposition}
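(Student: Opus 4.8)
The plan is to follow the proof of Proposition~\ref{prop:kmss} almost verbatim, checking that the rule $\cA$ which sends a based diagram $D$ to Ozsv\'ath and Szab\'o's quasi-isomorphism class of $\Khr(D)$-complexes satisfies the four conditions of Definition~\ref{def:kftheory} in their reduced form. Conditions (1), (2), and (4) are handled exactly as in the Kronheimer--Mrowka case, with the singular instanton chain groups $\CI(\bar L_I)$ replaced by the Heegaard Floer chain groups $\CF(-\Sigma(L_I))$ and $\Lambda^* V(D_I)$ replaced by $\Lambda^*(V(D_I)/(x))$, where $x$ is the basepoint component of $D_I$; the one genuinely new ingredient is condition (3), which now concerns connected sum rather than disjoint union, and this is where I expect the real work to lie.

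For condition (1), a based planar isotopy $\phi$ from $D$ to $D'$ pulls back the auxiliary data $\adata'$ to data $\adata$ on $D$ and induces a canonical grading-preserving chain isomorphism $\psi_\phi\colon C^\adata(D)\to C^{\adata'}(D')$ whose induced map on $E_1$ is, under the identifications $\CF(-\Sigma(L_I))\cong\Lambda^*H_1(-\Sigma(L_I))\cong\Lambda^*(V(D_I)/(x))$, the standard map $F_\phi\colon\cKhr(D)\to\cKhr(D')$; hence $\psi_\phi$ represents a morphism $\cA(D)\to\cA(D')$ agreeing on $E_2$ with the induced reduced Khovanov map. For condition (2), a $1$-handle attachment away from the basepoint is realized by a based diagram $\tilde D$ with one additional crossing $c$, also away from the basepoint, whose $0$- and $1$-resolutions at $c$ are $D$ and $D'$; for a suitable choice of data $\tilde\adata$ the complex $C^{\tilde\adata}(\tilde D)$ is the mapping cone of a degree~$0$ filtered chain map $T=\bigoplus_{I\le J}d_{I\times\{0\},J\times\{1\}}\colon C^\adata(D)\to C^{\adata'}(D')$, and $E_1(T)$ is the sum over $I$ of the surgery maps $(d_{I\times\{0\},I\times\{1\}})_*\colon\HF(-\Sigma(L_I))\to\HF(-\Sigma(L'_I))$. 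By the identification of the $E_1$ page of the Ozsv\'ath--Szab\'o cube with the reduced Khovanov complex --- that is, of the $\beta$-surgery maps with the merge and split maps, as in \cite{osz12}; see also \cite{bald7} --- these coincide with the maps $\Lambda^*(V(D_I)/(x))\to\Lambda^*(V(D'_I)/(x'))$ defining the reduced Khovanov $1$-handle map, so $T$ represents the required morphism. Condition (4) is immediate: the reduced Khovanov homology of an unlink is supported in a single homological degree, so the associated spectral sequence collapses at $E_2$ and $E_2(\cA(D))=E_\infty(\cA(D))$.

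The substance of the proof is condition (3). Here one uses the elementary fact that the branched double cover of a connected sum of based links is the connected sum of their branched double covers: if $D$ and $D'$ are based diagrams for $L$ and $L'$ with $m$ and $n$ crossings respectively, and $D\# D'$ is a connected sum as at the end of Subsection~\ref{ssec:kh}, then for every $I\in\{0,1\}^m$ and $J\in\{0,1\}^n$ one has $-\Sigma((L\# L')_{IJ})\cong -\Sigma(L_I)\,\#\,-\Sigma(L'_J)$, compatibly with the surgery framed links (which for $D\# D'$ are simply the disjoint union of those for $D$ and $D'$). One then invokes the connected sum formula in Heegaard Floer homology at the chain level: choosing a pointed Heegaard multi-diagram subordinate to $\mathbb L\cup\mathbb L'$ obtained by tubing together pointed multi-diagrams subordinate to $\mathbb L$ and $\mathbb L'$, the holomorphic polygon counts factor, producing for appropriate data $\adata,\adata',\adata''$ a quasi-isomorphism $\CF(-\Sigma(L_I)\,\#\,-\Sigma(L'_J))\to\CF(-\Sigma(L_I))\otimes\CF(-\Sigma(L'_J))$ for each pair $I,J$, whose induced map on homology is the standard isomorphism $\HF(Y_1\# Y_2)\cong\HF(Y_1)\otimes\HF(Y_2)$. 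This isomorphism respects the $\Lambda^*H_1$-module structures via the identification $\Lambda^*H_1(Y_1\# Y_2)\cong\Lambda^*H_1(Y_1)\otimes\Lambda^*H_1(Y_2)$; since the non-basepoint components of $(D\# D')_{IJ}$ are exactly those of $D_I$ together with those of $D'_J$, the resulting map on $E_1$ agrees, under the identifications of $(\ref{eqn:canonic})$ and the discussion preceding this proposition, with the chain-level map underlying the standard isomorphism $\Khr(D\# D')\to\Khr(D)\otimes\Khr(D')$, and hence on $E_2$ with that isomorphism.

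It then remains --- and this is the main obstacle --- to assemble these chain-level connected sum maps over all $I,J$ into a single degree~$0$ filtered chain map $C^{\adata''}(D\# D')\to C^\adata(D)\otimes C^{\adata'}(D')$; exactly as in the proof of Proposition~\ref{prop:kmss}, one takes for the degree~$0$ component the direct sum of the maps above and defines the higher-degree components through counts of holomorphic polygons for the tubed multi-diagram over higher-dimensional families of the relevant analytic data, and we omit the details. Once all four conditions are verified, $\cA$ is a reduced Khovanov-Floer theory, and the reduced analogue of Theorem~\ref{thm:kfunct2} shows that its spectral sequence is functorial.
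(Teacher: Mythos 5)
Your treatment of conditions (1), (2), and (4) matches the paper's proof essentially verbatim. For condition (3), however, you take a more laborious route than necessary, and in doing so you identify as the ``main obstacle'' something that in fact does not arise. You construct vertex-by-vertex quasi-isomorphisms
\[\CF(-\Sigma(L_I)\,\#\,-\Sigma(L'_J))\to\CF(-\Sigma(L_I))\otimes\CF(-\Sigma(L'_J))\]
and then propose to assemble them into a degree $0$ filtered chain map by separately defining higher-degree components through polygon counts over higher-dimensional families of analytic data, mimicking what the paper must do in the instanton case. But the Heegaard Floer picture is much cleaner: if one takes the connected sum of the two pointed Heegaard multi-diagrams at their basepoints (which match the based connected sum of the link diagrams), then by \cite[Lemma 3.4]{bald7} the generator sets and the holomorphic polygon counts of the whole cube split compatibly, and the resulting filtered complex $C^{\adata''}(D\#D')$ is honestly \emph{isomorphic} --- not merely quasi-isomorphic --- to the tensor product $C^{\adata}(D)\otimes C^{\adata'}(D')$, by an isomorphism that the cited lemma shows already agrees on $E_2$ with the standard reduced Khovanov map. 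There is nothing left to assemble and no higher-degree correction to construct. Your argument would presumably succeed, but it does substantially more work than the situation demands and obscures the point that in the Heegaard Floer setting, unlike the instanton setting, condition (3) is established by an elementary chain-level isomorphism rather than by constructing a filtered map through parametrized moduli spaces.
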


\begin{proof}
Let $\cA(D)$ denote the quasi-isomorphism class of $\Khr(D)$-complexes assigned to $D$ in Ozsv{\'a}th and Szab{\'o}'s construction. We   verify  below that $\cA$ satisfies the reduced analogues of conditions (1)-(4) of Definition \ref{def:kftheory}. 

For condition (1), a planar isotopy $\phi$ taking $D$ to $D'$ determines a canonical filtered (in fact, grading-preserving) chain isomorphism   \[\psi_\phi:C^\adata(D) \to C^{\adata'}(D'),\] where $\adata$ is the data pulled back from $\adata'$ via $\phi$, just as in the instanton case. Furthermore, it is clear that $E_1(\psi_\phi)$ agrees with the standard map \[F_\phi:\CKhr(D)\to\CKhr(D')\] associated to this isotopy in reduced Khovanov homology, with respect to the natural identifications of the various chain complexes. It follows that $\psi_\phi$ represents a morphism from $\cA(D)$ to $\cA(D')$ which agrees on $E_2$ with the map induced on reduced Khovanov homology, as desired.

For condition (2), Suppose $D'$ is obtained from $D$ via a 1-handle attachment. Let $\tilde D$ be a diagram with one more crossing than $D$ and $D'$, such that $D$ is the $0$-resolution of $\tilde D$ at this  crossing  and $D'$ is the $1$-resolution, as in the proof of Proposition \ref{prop:kmss}. Following that proof,  we can realize the complex $C^{\tilde\adata}(\tilde D)$ as the mapping cone of a degree 0 filtered chain map \[T:C^{\adata}(D)\to C^{\adata'}(D'),\] for some choice of data $\tilde \adata$ and the appropriate restrictions  $\adata$ and $\adata'$. As before, $T$ is given by the direct sum \[ T = \bigoplus_{I\leq J\in \{0,1\}^{n}} d_{I\times\{0\},J\times\{1\}},\] of   components of the differential $d^{\tilde\adata}(\tilde D)$, and \[E_1(T):E_1(C^{\adata}(D))\to E_1(C^{\adata'}(D'))\] is  the direct sum of the maps \[(d_{I\times\{0\},I\times\{1\}})_*:\HF(-\Sigma(L_{I}))\to\HF(-\Sigma(L'_{I}))\] over all $I\in \{0,1\}^n$.  It is easy to see that these maps  agree with the maps \[\Lambda^*(V(D_I)/(x))\to \Lambda^*(V(D'_I)/(x'))\] associated to  the 1-handle attachment, via the natural identifications 
 \begin{align*}
 \Lambda^*(V(D_I)/(x))&\cong\HF(-\Sigma(L_{I}))\\
  \Lambda^*(V(D'_I)/(x'))&\cong\HF(-\Sigma(L'_{I})),
 \end{align*} where $x$ and $x'$ are the components of $D_I$ and $D'_I$ containing the basepoint $\infty$. It follows that $E_1(T)$ agrees with the chain map \[\cKhr(D)\to\cKhr(D')\] associated to  the 1-handle attachment, and, hence, that  $T$ represents a morphism from $\cA(D)$ to $\cA(D')$ which agrees on $E_2$ with the map induced on reduced Khovanov homology, as desired.

For condition (3), it suffices as in the instanton Floer case to show that for some sets of data $\adata$, $\adata'$, $\adata''$, there is a degree 0 filtered chain map \[C^{\adata''}(D\sqcup D')\to C^{\adata}(D)\otimes C^{\adata'}(D'\sqcup U_\infty)\] which agrees on $E_2$ with the standard isomorphism \[\Khr(D\sqcup D') \to \Khr(D)\otimes \Khr(D'\sqcup U_{\infty}),\] where $D$ and $D$' are disjoint diagrams with $D$ containing $\infty$, as at the end of Subsection \ref{ssec:kh}. But, given the Heegaard multi-diagrams encoded by $\adata$ and $\adata'$, one can simply take an appropriate connected sum to produce a multi-diagram giving rise to a complex $C^{\adata''}(D\sqcup D')$ which is   isomorphic to the tensor product \[C^{\adata}(D)\otimes C^{\adata'}(D'\sqcup U_{\infty})\] by an isomorphism which agrees on $E_2$ with the map on reduced Khovanov homology (see \cite[Lemma 3.4]{bald7}).
 
 
For condition (4), suppose $D$ is a diagram of the unlink. Then its reduced Khovanov homology is supported in homological degree 0. Hence, the spectral sequence collapses at the $E_2$ page. In particular, $E_2(\cA(D)) = E_\infty(\cA(D))$, as desired.
 \end{proof}

\subsection{Szab{\'o}'s geometric spectral sequence}
\label{ssec:szabogss}
Suppose $D$ is a link diagram as in Subsection \ref{ssec:kmss}. Given  auxiliary data $\adata$ consisting of  a \emph{decoration} of $D$, Szab{\'o} defines in \cite{szabo} a chain complex $(C^\adata(D),d^\adata(D))$, where \[C^\adata(D) = \bigoplus_{I\in\{0,1\}^n} \cKh(D_I)\] and the differential $d^\adata(D)$ is a sum of maps \[d_{I,J}: \Lambda^*V( D_I)\to \Lambda^*V( D_J)\] over all pairs $I\leq J$ in $\{0,1\}^n$. The maps $d_{I,I}$ are identically zero. For $I<_1 I'$, the map $d_{I,I'}$ is the usual merge or split map used to define the Khovanov differential; in particular, it does not depend on the decoration $\adata$.  The  maps $d_{I,J}$ are also defined combinatorially, but do depend on $\adata$. It is an interesting question what the homology of this complex computes. Szab{\'o} conjectures the following in \cite{szabo}.

\begin{conjecture}
 The homology $H_*(C^\adata(D),d^\adata(D))$ is isomorphic to $\oplus^2\HF(-\Sigma(L))$.
\end{conjecture}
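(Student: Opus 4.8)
The statement is recorded as a conjecture, and I do not expect a complete proof within the techniques of this paper; what follows is the strategy I would pursue. The organizing observation is that over $\F$ there is a canonical isomorphism $\Kh(L)\cong\Khr(L)\otimes V$ with $V$ a two-dimensional graded vector space (Shumakovitch), while Ozsv\'ath-Szab\'o's spectral sequence already has $E_2=\Khr(L)$ and $E_\infty\cong\HF(-\Sigma(L))$. Thus the target $\oplus^2\HF(-\Sigma(L))=\HF(-\Sigma(L))\otimes V$ is exactly what one obtains from Ozsv\'ath-Szab\'o's spectral sequence after tensoring with the trivial complex $(V,0)$. The plan is therefore to show that Szab\'o's filtered complex $C^{\adata}(D)$ is filtered quasi-isomorphic to the tensor product of Ozsv\'ath-Szab\'o's filtered complex with $(V,0)$; the conjecture then follows by passing to $E_\infty$.

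Next I would pin down what such a comparison actually requires. Both Szab\'o's construction and the doubled Ozsv\'ath-Szab\'o construction are Khovanov-Floer theories --- the former by Theorem \ref{thm:szabokf}, the latter by Theorem \ref{thm:kmoskf} together with the fact that tensoring a reduced Khovanov-Floer theory with a fixed trivial complex manifestly preserves conditions (1)--(4) --- so by Theorem \ref{thm:khfunct} both spectral sequences are link invariants agreeing with $\Kh$ on $E_2$. But, as the contrast between the Kronheimer-Mrowka and Ozsv\'ath-Szab\'o theories shows, agreement on $E_2$ does not force agreement on $E_\infty$; the Khovanov-Floer formalism alone is not enough, and one must genuinely match the higher differentials. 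The natural way to do this is a Heegaard-diagram computation: for a bridge presentation of $L$, the framed link $\mathbb{L}\subset-\Sigma(L)$ admits a pointed Heegaard multi-diagram adapted to the crossings, and the Ozsv\'ath-Szab\'o surgery differential $d_{I,J}$ is a count of pseudo-holomorphic polygons in a symmetric product. One would choose this multi-diagram so that the polygon counts can be evaluated combinatorially and shown to reproduce Szab\'o's combinatorial maps $d_{I,J}$ (tensored with the identity on $V$). Ozsv\'ath-Szab\'o carried out the analogous computation for $d_1$; the content here is to extend it to all $d_k$, or at least to enough of the filtered differential to determine $E_\infty$. Invariance helps: since both $E_\infty$ pages are link invariants, it would suffice to establish the isomorphism for one cofinal class of diagrams, for instance alternating diagrams or iterated connected sums of standard pieces.

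The main obstacle is precisely this matching step, which I expect to be the crux of the whole argument. Szab\'o's higher maps are governed by an intricate combinatorial recipe (configurations of active and passive circles, decorations, the surgery rule), whereas the Floer-side higher differentials count holomorphic polygons, which resist explicit evaluation beyond triangles and whose compatibility with degeneration and associativity is not manifest in a form that obviously mirrors Szab\'o's recipe. Two routes around this seem plausible. The first is an axiomatic characterization: isolate a list of naturality properties --- behaviour under merges and splits, compatibility with the surgery exact triangles relating the various $-\Sigma(L_I)$, and the disjoint-union and connected-sum formulas --- that pin down the filtered complex up to filtered quasi-isomorphism, and check that both constructions satisfy them; the worry is that Szab\'o's higher maps may not be forced by such constraints. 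The second is to bypass holomorphic curves using bordered Floer homology: decompose $-\Sigma(L)$ along the branch locus into elementary pieces, build a purely combinatorial model of the surgery complex from the associated bimodules, and compare that model term by term with Szab\'o's complex. This second route looks the most likely to succeed but is technically heavy, and the real work lies in making the two combinatorial bookkeeping systems literally agree rather than merely be quasi-isomorphic with the same $E_2$ page.
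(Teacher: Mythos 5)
The statement you were given is not proven in the paper at all: it is recorded verbatim as a conjecture, attributed to Szab{\'o} in \cite{szabo}, and the authors make no claim toward it --- their Theorem \ref{thm:szabokf} only shows that Szab{\'o}'s construction is a Khovanov-Floer theory, hence that its spectral sequence is a functorial link invariant, which says nothing about what the $E_\infty$ page or total homology actually is. So there is no proof in the paper to compare yours against, and you were right to flag the statement as open rather than manufacture an argument.

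As a strategy sketch, what you wrote is reasonable and consistent with how the problem is viewed: identifying $\oplus^2\HF(-\Sigma(L))$ with the doubled Ozsv{\'a}th-Szab{\'o} theory via $\Kh\cong\Khr\otimes V$ over $\F$, and then trying to match Szab{\'o}'s combinatorial higher maps with the holomorphic-polygon differentials (or with a bordered-Floer combinatorial model of them), is essentially the expected route, and your observation that the Khovanov-Floer axioms alone cannot close the gap is correct --- two Khovanov-Floer theories can share $E_2$ and all the formal properties of Section \ref{sec:defs} while having different $E_\infty$ pages, as the instanton and Heegaard Floer examples already illustrate. The one caution is that your reduction ``it suffices to check one cofinal class of diagrams'' is weaker than stated: invariance of the isomorphism \emph{class} of $E_\infty$ per diagram does not by itself give an isomorphism of the two theories unless the comparison is made compatibly with the maps induced by cobordisms or at least with the filtered quasi-isomorphism type, so the diagram-by-diagram matching of higher differentials cannot really be evaded. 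In short: no gap relative to the paper (which proves nothing here), but your proposal is a program, not a proof, and the crux you identify is exactly where the open problem lives.
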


The complex $(C^\adata(D),d^\adata(D))$ is obviously filtered with respect to the homological grading. By construction, \[(\cKh(D),d) = (E_1(C^\adata(D)), d_1(C^\adata(D))),\] so that \[ \Kh(D)=E_2(C^\adata(D))\] on the nose. We may therefore define each $q^\adata$ to be the identity map. Szab{\'o} shows that for any two sets of data $\adata$ and $\adata'$, there exists a filtered chain map \[f:C^\adata(D)\to C^{\adata'}(D)\] which is equal to the identity map on $E_2$. In particular, Szab{\'o}'s construction assigns to every   link diagram $D$ a quasi-isomorphism class of $\Kh(D)$-complexes, with respect to the homological grading on $\Kh(D)$. As before, we claim the following.

\begin{proposition}
\label{prop:sss}
Szab{\'o}'s construction is a Khovanov-Floer theory.
\end{proposition}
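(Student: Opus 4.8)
The plan is to follow the template of the proofs of Propositions \ref{prop:kmss} and \ref{prop:osss}, checking that the quasi-isomorphism class $\cA(D)$ of $\Kh(D)$-complexes supplied by Szab\'o's construction satisfies conditions (1)--(4) of Definition \ref{def:kftheory}. The essential simplification is that Szab\'o's chain complex, its filtration, the maps $d_{I,J}$, and the identification $(\cKh(D),d) = (E_1(C^{\adata}(D)), d_1(C^{\adata}(D)))$ are all purely combinatorial and local to the cube of Khovanov resolutions; so, unlike in the instanton and Heegaard Floer settings, every condition reduces to an inspection of this cube structure with no analytic input required.

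Conditions (1) and (4) are immediate. For (1), a planar isotopy $\phi$ carrying $D$ to $D'$ transports the crossing labels and the decoration $\adata$ to a decoration $\adata'$ of $D'$ and induces a grading-preserving chain isomorphism $\psi_\phi \colon C^{\adata}(D) \to C^{\adata'}(D')$ identifying the two cubes of resolutions; since $E_1(C^{(\cdot)}(\cdot)) = \cKh(\cdot)$ on the nose, $E_1(\psi_\phi)$ is exactly the canonical Khovanov isomorphism $F_\phi$, so $\psi_\phi$ represents a morphism $\cA(D)\to\cA(D')$ agreeing on $E_2$ with the induced map. For (4), the Khovanov homology of any unlink diagram $D$ is concentrated in homological degree $0$, which is the filtration grading, so every differential $d_i$ with $i\geq 2$ vanishes for degree reasons and $E_2(\cA(D)) = E_\infty(\cA(D))$.

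For condition (2), suppose $D'$ is obtained from $D$ by a diagrammatic $1$-handle attachment, and let $\tilde D$ be a diagram with one extra crossing $c$ whose $0$- and $1$-resolutions are $D$ and $D'$, just as in the proof of Proposition \ref{prop:kmss}. One checks that a decoration $\tilde\adata$ of $\tilde D$ can be chosen restricting to prescribed decorations $\adata$ of $D$ and $\adata'$ of $D'$, so that the sub-cubes of $C^{\tilde\adata}(\tilde D)$ in which $c$ is resolved $0$, respectively $1$, are exactly $C^{\adata}(D)$ and $C^{\adata'}(D')$; consequently $C^{\tilde\adata}(\tilde D)$ is the mapping cone of the degree-$0$ filtered chain map $T = \bigoplus_{I\leq J} d_{I\times\{0\},J\times\{1\}}$. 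Since Szab\'o's diagonal maps $d_{I,I}$ vanish, the degree-$0$ component of $T$ is $\bigoplus_I d_{I\times\{0\},I\times\{1\}}$, the sum of the relevant merge/split maps, which is precisely the Khovanov $1$-handle chain map; hence $E_1(T)$ is that chain map and $T$ represents the desired morphism $\cA(D)\to\cA(D')$.

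For condition (3), given diagrams $D$ and $D'$, one chooses a decoration $\adata''$ of $D\sqcup D'$ restricting to decorations $\adata$ of $D$ and $\adata'$ of $D'$, and checks that under the canonical identifications $\Lambda^* V((D\sqcup D')_{II'}) \cong \Lambda^* V(D_I)\otimes\Lambda^* V(D'_{I'})$ the complex $C^{\adata''}(D\sqcup D')$ is identified with the tensor product $C^{\adata}(D)\otimes C^{\adata'}(D')$; this uses that Szab\'o's maps $d_{I,J}$ are local and vanish on configurations disconnected in the diagram, so in particular no surviving term of $d^{\adata''}(D\sqcup D')$ mixes crossings of $D$ with crossings of $D'$. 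On $E_2 = \Kh$ this identification is the standard disjoint-union isomorphism. I expect the only point demanding genuine care is this locality/splitting behaviour of Szab\'o's maps, together with the compatibility of decorations under removing a crossing used for condition (2); but both can be read off directly from the construction in \cite{szabo} and present no real obstacle, which is precisely the point of the Khovanov-Floer formalism.
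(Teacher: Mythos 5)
Your proposal is correct and follows essentially the same outline as the paper's proof: planar isotopy gives a chain isomorphism, the mapping-cone description of $C^{\tilde\adata}(\tilde D)$ handles $1$-handles, the disjoint-union complex splits as a tensor product, and the unlink spectral sequence collapses for degree reasons. The only difference is that where you argue by inspecting locality and restriction of decorations, the paper simply cites the specific devices in Szab\'o's paper (the Extension Formula for condition (2) and the Disconnected Rule for condition (3)) which encode exactly those facts.
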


\begin{proof}
Let $\cA(D)$ denote the quasi-isomorphism class of $\Kh(D)$-complexes assigned to $D$ in Szab{\'o}'s construction. The proof of this proposition is again just a verification that $\cA$ satisfies  conditions (1)-(4) of Definition \ref{def:kftheory}.

For condition (1), we proceed exactly as in the previous two subsections.

For condition (2), we also proceed as in those subsections. Let $D$, $D'$, and $\tilde D$ be diagrams described previously. We may choose a decoration $\tilde \adata$ for $\tilde D$ which restricts to decorations $\adata$ and $\adata'$ for $D$ and $D'$. It follows from the \emph{Extension Formula} in \cite[Definition 2.5]{szabo} that we can realize the complex $C^{\tilde\adata}(\tilde D)$ as the mapping cone of the degree 0 filtered chain map \[T:C^{\adata}(D)\to C^{\adata'}(D')\]  given by the direct sum \[ T = \bigoplus_{I\leq J\in \{0,1\}^{n}} d_{I\times\{0\},J\times\{1\}},\] of   components of the differential $d^{\tilde\adata}(\tilde D)$. Then \[E_1(T):E_1(C^{\adata}(D))\to E_1(C^{\adata'}(D'))\] is given by the direct sum of the maps \[d_{I\times\{0\},I\times\{1\}}:\Lambda^*V(  D_I)\to\Lambda^*V(  D'_I)\] over all $I\in \{0,1\}^n$.  But  these   are precisely   the maps associated to  the 1-handle attachment. Thus, $E_1(T)$ agrees with the chain map \[\cKh(D)\to\cKh(D')\] associated to  the 1-handle attachment. It follows that $T$ represents a morphism from $\cA(D)$ to $\cA(D')$ which agrees on $E_2$ with the map induced on  Khovanov homology, as desired.

For condition (3), suppose $\adata$ and $\adata'$ are decorations for  $D$ and $D'$, and let $\adata''$ be the corresponding decoration for $D\sqcup D'$. Then  the \emph{Disconnected Rule}  \cite[Definition 2.7]{szabo} implies that 
\[C^{\adata''}(D\sqcup D')=C^{\adata}(D)\otimes C^{\adata'}(D')\] as complexes. It follows immediately that $\cA$ satisfies condition (3).

For condition (4), suppose $D$ is a diagram of the unlink. Then its Khovanov homology is supported in homological degree 0. Hence, the spectral sequence collapses at the $E_2$ page. In particular, $E_2(\cA(D)) = E_\infty(\cA(D))$, as desired.
\end{proof}

\subsection{Lee's spectral sequence}
\label{ssec:lee}
Let $D$ be a link diagram as in the previous subsections. 
In \cite{Lee}, Lee defined a perturbation of the Khovanov complex of $D$ which, over $\mathbb{Q}$, gives rise to a spectral sequence with $E_2$ page the Khovanov homology $\Kh(D)$ and abutting to $(\Q\oplus \Q)^{k},$ where $k$ is the number of components of $D$. When $D$ is a knot diagram,  Rasmussen's numerical invariant $s_{\Q}$ \cite{ras3} mentioned in the introduction may be  defined as the average of the quantum gradings on  the two  summands of the $E_{\infty}$ page of this spectral sequence. This invariant  defines a  homomorphism from the smooth concordance group to $\mathbb{Z}$, and provides a lower bound on the smooth slice genus. 

In \cite{bncob}, Bar-Natan defined a version of Lee's construction for coefficients in $\F$, with the corresponding properties as above. Roughly speaking, Bar-Natan's theory  is built from the $(1+1)$-dimensional TQFT associated with the Frobenius algebra $\F[x]/(x^2+x)$ while Lee's theory corresponds to the Frobenius algebra $\Q[x]/(x^2 - 1)$.

Bar-Natan's construction assigns to  $D$ a chain complex $(C(D), d^{BN})$, where \[C(D) =\bigoplus_{I\in\{0,1\}^n} \cKh(D_I)\] and the differential $d^{BN}$  is a sum of maps \[d^{BN}_{I,I'}:\Lambda^*V(D_I)\to \Lambda^*V(D_{I'})\] over all pairs $I<_1 I' \in \{0,1\}^n$. Here, \[d^{BN}_{I,I'} = d_{I,I'} + d'_{I,I'},\] where $d_{I,I'}$ is the standard merge or split map used to define the Khovanov differential, and $d'_{I,I'}$ is a map which raises the quantum grading by $2$. 
Turner proves the following in \cite{tur}.

\begin{theorem}
The homology $H_*(C(D), d^{BN})\cong (\F\oplus \F)^{k}$, where $k$ is the number of components of $D$.
\end{theorem}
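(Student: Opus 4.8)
The plan is to adapt Lee's computation over $\Q$ \cite{Lee} to the field $\F$ (which is Turner's argument \cite{tur}), exploiting the fact that Bar-Natan's Frobenius algebra splits as a product of two copies of $\F$. Concretely, in $R:=\F[x]/(x^2+x)$ the elements $\mathbf a=x$ and $\mathbf b=1+x$ satisfy $\mathbf a^2=\mathbf a$, $\mathbf b^2=\mathbf b$, $\mathbf a\mathbf b=\mathbf b\mathbf a=0$ and $\mathbf a+\mathbf b=1$, so they form a basis of orthogonal idempotents and exhibit $R$ as a product $\F\times\F$ of Frobenius algebras. The first step is to rewrite each state space $\Lambda^*V(D_I)$ in terms of these idempotents: a basis is given by the colorings of the circles of $D_I$ by the symbols $\mathbf a,\mathbf b$, and in this basis the Bar-Natan $1$-handle maps $d^{BN}_{I,I'}$ become transparent --- a merge of two circles of the same color returns a circle of that color (and a merge of two circles of different colors is zero), while a split of a circle of color $\mathbf c$ returns two circles of color $\mathbf c$. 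Thus, as an ungraded chain complex (the change of basis is not $\mathbf q$-graded, but the homological grading is unaffected), $(C(D),d^{BN})$ becomes the cube-of-resolutions complex in which color is transported along merges and splits, and a merge of unequally colored circles vanishes.

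With the complex in this form, I would first exhibit $2^k$ cycles. For each of the $2^k$ orientations $o$ of the underlying link, let $D_o$ be the oriented resolution and let $\mathfrak s_o\in C(D)$ be the generator supported on $D_o$ whose circles are colored by Lee's rule (the color of a circle determined by the mod-$2$ count of circles of $D_o$ separating it from $\infty$, weighted by the induced orientations). A local check at each crossing --- using that the non-oriented smoothing at an oriented resolution either merges two circles that Lee's rule forces to carry opposite colors, or effects a split whose contributions cancel in pairs --- shows $d^{BN}\mathfrak s_o=0$. One then checks that the classes $[\mathfrak s_o]$ are linearly independent in $H_*(C(D),d^{BN})$ by an argument with the quantum filtration, as in \cite{Lee}: the $\mathfrak s_o$, together with their pairwise sums, occupy distinct levels of the filtration coming from the original $\mathbf q$-grading, so no nontrivial $\F$-linear combination of them can bound. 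This gives $\dim_\F H_*(C(D),d^{BN})\ge 2^k$.

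For the matching upper bound I would reduce $(C(D),d^{BN})$ by cancellation in the idempotent basis (Lemma \ref{lem:cancel}), successively canceling the coefficient-$1$ components of $d^{BN}$ (the merges of equally colored circles and the splits) until the differential vanishes; the homology is then the span of the surviving generators. The content of the argument is the claim that the surviving generators are exactly the $\mathfrak s_o$, so that $\dim_\F H_*(C(D),d^{BN})=2^k$ and hence $H_*(C(D),d^{BN})\cong(\F\oplus\F)^k$. Equivalently, one can induct on the number of crossings: the base case is a crossingless diagram of a $k$-component unlink, where $d^{BN}=0$ and $H_*(C(D),d^{BN})=R^{\otimes k}\cong(\F\oplus\F)^k$; for the inductive step, exactly as in Subsection \ref{ssec:funct}, $C(D)$ is the mapping cone of the Bar-Natan $1$-handle map $T\colon C(D_0)\to C(D_1)$ at a chosen crossing $c$, and the associated long exact sequence, together with the observation that $\{k_0,k_1\}=\{k,k+1\}$ when $c$ is a self-crossing of a component of $D$ and $k_0=k_1=k-1$ when $c$ joins two components, reduces the count to a computation of the rank of $T_*$. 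In either form, the hard part --- and the main obstacle --- is this last combinatorial input: that cancellation leaves precisely the oriented-resolution colorings standing, or equivalently that $T_*$ has maximal rank at a self-crossing and vanishes at a cross-component crossing. This is where Lee's bookkeeping with the canonical generators $\mathfrak s_o$ does the real work; the surrounding homological algebra, all of which is already in place in Subsection \ref{ssec:homalg}, is routine.
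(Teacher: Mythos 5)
You should first note that the paper does not actually prove this statement: it is quoted from Turner \cite{tur}, so there is no internal argument to compare with, and your proposal has to stand on its own as a rendition of Turner's adaptation of Lee. Its first half is essentially right: $\mathbf{a}=x$, $\mathbf{b}=1+x$ are orthogonal idempotents splitting $\F[x]/(x^2+x)$, merges and splits act on colorings as you say, and the classes $\mathfrak{s}_o$ are cycles. But your stated local check is off. From the oriented resolution $D_o$ no split ever occurs: at any crossing the two arcs of the oriented smoothing lie on \emph{distinct} circles (two coherently oriented parallel strands of a single embedded circle would have the channel between them on the interior side of one and the exterior side of the other), and in the idempotent basis a split sends $\mathbf{c}$ to the single term $\mathbf{c}\otimes\mathbf{c}$, so nothing could ``cancel in pairs'' over $\F$ anyway. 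The correct statement is that the outgoing components of $d^{BN}$ at $D_o$ are precisely the merges at positive crossings, and Lee's coloring forces the two merged circles to carry opposite idempotents, hence each such merge vanishes. The linear-independence step is also thinner than you suggest: lying in distinct quantum-filtration levels does not by itself prevent a combination of cycles from bounding, and the usual treatments obtain independence only together with the dimension count.

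The genuine gap is the upper bound, and you say so yourself: the claim that cancellation leaves exactly the oriented-resolution colorings standing (equivalently, that the $1$-handle map has full rank at a self-crossing and vanishes on homology at a crossing joining two components) is the entire content of the theorem beyond the lower bound, and it is asserted rather than proved. Nothing in Subsection \ref{ssec:homalg} supplies it; the homological algebra there only organizes the cancellation, it does not tell you which generators survive. To close the gap one can follow Turner \cite{tur} directly, or run the delooping-and-Gaussian-elimination argument (the Karoubi-envelope argument of Bar-Natan and Morrison), which works verbatim over $\F$ precisely because the idempotents exist there: deloop each circle into its $\mathbf{a}$- and $\mathbf{b}$-colored copies, observe that every elementary cobordism between colored resolutions is either zero or an isomorphism, and eliminate until only one generator per orientation remains with zero differential; alternatively, your induction on crossings can be completed by computing the effect of the $1$-handle cobordism map on the canonical generators (Lee/Rasmussen-style bookkeeping), which is exactly the input you deferred. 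As written, then, the proposal establishes $\dim_\F H_*(C(D),d^{BN})\geq 2^k$ modulo the corrected local check, but not the equality claimed in the theorem.
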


Note that $(C(D), d^{BN})$ is a filtered complex with respect to the quantum grading. Furthermore, \[\Kh(D)=E_1(C(D))=E_2(C(D))\] for the associated spectral sequence. It is thus clear that Bar-Natan's construction assigns to a link diagram $D$ a quasi-isomorphism class of $\Kh(D)$-complexes, with respect to the quantum grading on $\Kh(D)$. Moreover, we have the following.

\begin{proposition}
Bar-Natan's construction is a Khovanov-Floer theory.
\end{proposition}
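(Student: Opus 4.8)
The plan is to mimic the proofs of Propositions~\ref{prop:kmss}, \ref{prop:osss}, and \ref{prop:sss} essentially verbatim: writing $\cA(D)$ for the quasi-isomorphism class of $\Kh(D)$-complexes carried by Bar-Natan's complex $(C(D),d^{BN})$, with each $q^{\adata}$ taken to be the identity map exactly as in Szab\'o's case, I would simply check conditions (1)--(4) of Definition~\ref{def:kftheory} with respect to the quantum filtration. The single structural fact about Bar-Natan's theory that makes all four checks go through is that $(C(D),d^{BN})$, just like the Khovanov complex, is the total complex of a cube of resolutions whose edge maps $d^{BN}_{I,I'}$ (for $I<_1 I'$) are the merge and split maps of the Frobenius algebra $\F[x]/(x^2+x)$, and that each such edge map is $d_{I,I'}+d'_{I,I'}$ with $d_{I,I'}$ the ordinary Khovanov merge/split map and $d'_{I,I'}$ \emph{strictly} raising the quantum grading. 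Hence the lowest-order piece of $d^{BN}$ is the Khovanov differential, so $E_1(C(D))=E_2(C(D))=\Kh(D)$ on the nose, and any filtered chain map assembled from cube edge maps induces on $E_1=E_2$ exactly the corresponding Khovanov map.

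For condition~(1), a planar isotopy $\phi$ from $D$ to $D'$ induces a canonical identification of cubes of resolutions, and hence a bigrading-preserving (in particular degree $0$ filtered) chain isomorphism $\psi_\phi\colon C(D)\to C(D')$ which, the Bar-Natan perturbation being local, intertwines the two differentials; then $E_1(\psi_\phi)$ is the standard Khovanov isotopy map, so $\psi_\phi$ represents a morphism $\cA(D)\to\cA(D')$ agreeing on $E_2$ with the induced map on Khovanov homology. For condition~(2), given $D'$ obtained from $D$ by a diagrammatic $1$-handle attachment, take $\tilde D$ with one extra crossing having $D$ and $D'$ as its two resolutions, exactly as in the proof of Proposition~\ref{prop:kmss}; the cube structure exhibits $C(\tilde D)$ as the mapping cone of the degree $0$ filtered chain map $T=\bigoplus_{I\in\{0,1\}^n}d^{BN}_{I\times\{0\},I\times\{1\}}\colon C(D)\to C(D')$, and since passing to $E_1$ records only the quantum-grading-preserving part, $E_1(T)$ is precisely the direct sum of the Khovanov merge and split maps $\Lambda^*V(D_I)\to\Lambda^*V(D'_I)$, i.e. the chain map $\cKh(D)\to\cKh(D')$ associated to the $1$-handle attachment; hence $T$ represents the required morphism.

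For condition~(3), disjoint circles do not interact under the TQFT, so $C(D\sqcup D')=C(D)\otimes C(D')$ as filtered complexes, the quantum grading being additive under disjoint union, and on $E_2=\Kh$ this equality is the standard disjoint-union isomorphism. For condition~(4), if $D$ is a diagram of the unlink then $\Kh(D)=E_2(C(D))$ is supported in homological degree $0$; since every spectral sequence differential $d_i$ still raises the homological grading by one (cancellation preserves this property), all $d_i$ with $i\ge2$ vanish and $E_2(\cA(D))=E_\infty(\cA(D))$.

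I do not anticipate a genuine obstacle: the whole argument collapses to the bookkeeping already done for Szab\'o's construction. The one point that merits a careful word is condition~(2) --- one must confirm that the extra crossing of $\tilde D$ splits $C(\tilde D)$ as a mapping cone over the \emph{full} Bar-Natan differential $d^{BN}$ rather than over the mere Khovanov differential, which is immediate once one observes that $d^{BN}$, like $d$, is built solely from single-edge maps in the cube of resolutions --- together with the remark that Bar-Natan's perturbation $d'$ raises the quantum filtration strictly, so it contributes nothing to $E_1=E_2$ while leaving the filtered-chain-map structure of $T$, $\psi_\phi$, and the tensor-product identification undisturbed.
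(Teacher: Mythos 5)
Your proof is correct and takes essentially the same approach the paper intends; the paper itself omits the verification with a one-line remark that it ``proceeds almost exactly as in the previous subsections, but is even easier.'' You have also correctly handled the one genuinely non-mechanical point---condition (4) cannot literally repeat the Szab\'o argument because the Bar-Natan filtration is by quantum rather than homological grading, so you correctly invoke the auxiliary homological grading (preserved under cancellation) to force the higher differentials to vanish when $\Kh(D)$ of the unlink is concentrated in homological degree $0$.
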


\begin{proof} The proof proceeds almost exactly   as in the previous subsections, but is even easier; we omit it here.
\end{proof}

As mentioned above, if $D$ is a knot diagram, then Rasmussen's invariant $s_\F$ can be defined as the average of quantum gradings of the two summands of \[E_{\infty}(C(D))\cong \F\oplus \F.\]  A priori, this average depends on the diagram $D$. The fact that Bar-Natan's spectral sequence is functorial provides an independent proof that this average is, in fact, a knot invariant.

\subsection{New knot invariants}
\label{subsec:new_knot_invariants}
In addition to gathering known spectral sequences from Khovanov homology under the umbrella of our Khovanov-Floer formalism, there is an opportunity to search for combinatorial perturbations of the Khovanov differential which give rise to Khovanov-Floer theories.  Our main result shows that any such perturbation gives a spectral sequence which is a functorial knot invariant.  Szab{\'o}'s geometric spectral sequence \cite{szabo} and Lee's deformation \cite{Lee} provide examples in which the resulting spectral sequence may be non-trivial.

In this subsection we give three examples of combinatorial perturbations in order to stimulate further work in classifying such perturbations and in computing their spectral sequences.  We do not know if, for example, the spectral sequence of any perturbation that we give here necessarily collapses at the $E_2$ page for all links.

Suppose that $I,J \in \{0,1\}^n$ such that  $I < _k J$, and choose a sequence of immediate successors
\[ I = I_0 <_1 I_1 <_1 I_2 <_1 \cdots <_1 I_k = J {\rm .} \]
For a planar diagram $D$ with crossings  $1,\dots,n$, this sequence defines a map
\[ d_{I,J} = d_{I_{k-1}, I_k} \circ \cdots \circ d_{I_0, I_1} : \Lambda^* V(D_I) \rightarrow \Lambda^* V(D_J) {\rm .} \]
Note that this map does not depend on the choice of sequence since 2-dimensional faces in the Khovanov cube commute.

Now we define the endomorphism
\[ d_k = \bigoplus_{I <_k J} d_{I,J} : \cKh(D) \rightarrow \cKh(D) \]
for each $k \geq 1$.  Note that each $d_k$ preserves the quantum grading and shifts the homological grading by $k$, and that $d_1$ is the Khovanov differential.
Finally, for any sequence $\underline{\mathbf{a}} = (a_1, a_2, a_3, a_4, \ldots)$ where $a_i \in \F$ for all $i \geq 1$ and $a_1 = 1$ we define the endomorphism
\[d_{\underline{\mathbf{a}}} = \bigoplus_{k \geq 1} a_k d_k : \cKh(D) \rightarrow \cKh(D) {\rm .}\]

We shall check that $d_{\underline{\mathbf{a}}}^2 = 0$ and leave it as an (easy) exercise for the reader to verify that this defines a Khovanov-Floer theory with a homological filtration and a quantum grading.

The key ingredient in the check is that $d_{I,K} = d_{J,K} \circ d_{I,J}$ for any $I \leq J \leq K$.  For convenience if $k$ is an odd integer then set $a_{k/2} = 0$ and set the binomial coefficient
\[ \left(
\begin{array}{c}
k\\
k/2\\
\end{array}
\right) = 0 {\rm .} \]
We have
\begin{eqnarray*}
	d_{\underline{\mathbf{a}}}^2 &=& \bigoplus_{i,j \geq 1} a_{j}d_{j} \circ a_{i}d_{i} = \bigoplus_{i,j \geq 1}(a_ja_i) d_{j} \circ d_{i} \\
	&=& \bigoplus_{\stackrel{I <_i J <_j K}{i,j\geq 1}} (a_j a_i) d_{J,K} \circ d_{I,J} = \bigoplus_{\stackrel{I <_i J <_j K}{i,j \geq 1}} (a_j a_i) d_{I,K} \\
	&=& \bigoplus_{\stackrel{I <_k K}{k \geq 2,k-1 \geq j \geq 1}} (a_{j} a_{k-j}) \left(
	\begin{array}{c}
		k\\
		j\\
	\end{array}
	\right) d_{I,K} \\
	&=& \bigoplus_{\stackrel{I <_k K}{k \geq 2, k/2 > j \geq 1}}  \left(2 \left(
	\begin{array}{c}
		k\\
		j\\
	\end{array}
	\right)(a_{j} a_{k-j}) + \left(
	\begin{array}{c}
		k\\
		k/2\\
	\end{array}
	\right)(a_{k/2} a_{k/2}) \right) d_{I,K} = 0 {\rm .}
\end{eqnarray*}

This concludes the first example.  For the second example, consider the same set-up of a planar diagram $D$ with crossings $1,\dots,n$.  Now look for all pairs $(I,J) \in \{0,1\}^n$ such that $I <_2 J$ and such that if $I <_1 K <_1 J$ then the movie represented by the sequence \[D_I, D_K, D_J\] consisting of two $1$-handle attachments describes a cobordism which is the union of a twice-punctured torus with some annuli.  We call such a pair $(I,J)$ a \emph{ladybug configuration} and write the set of all ladybug configurations as $\mathbf{L}$.

For $(I,J) \in \mathbf{L}$ we wish to define a map
\[ d'_{I,J} : \Lambda^* V(D_I) \rightarrow \Lambda^* V(D_J) {\rm .}\]
To do this, we first identify $D_I$ with $D_J$ (and so $\Lambda^* V(D_I)$ with $\Lambda^* V(D_J)$) by identifying circles which are part of the same connected component of the cobordism.  Then the map $d'_{I,J}$ is defined to be wedging with the generator of $\Lambda^* V(D_I)$ corresponding to a boundary component of the genus 1 part of the cobordism.

Now define the endomorphism
\[ d_L = d \oplus \bigoplus_{(I,J) \in \mathbf{L}} d'_{I,J} : \cKh(D) \rightarrow \cKh(D) {\rm ,} \]
where $d$ is the Khovanov differential.  Note that, as in the case of $d_{\underline{\mathbf{a}}}$, we have that $d_L$ preserves the quantum grading.

Again, once it is verified that $d_L^2 = 0$, it is an easy exercise to see that this defines a Khovanov-Floer theory.  The check that $d_L^2 = 0$ is combinatorial.  Explicitly, for any $I <_3 J$ we need to verify that
\[ \bigoplus_{I <_1 K <_2 J} d'_{K,J} \circ d_{I,K} \oplus \bigoplus_{I <_2 K <_1 J} d_{K,J} \circ d'_{I,K} : \Lambda^* V(D_I) \rightarrow \Lambda^* V(D_J) = 0 {\rm ,} \]
and for any $I <_4 J$ we need to verify that
\[ \bigoplus_{I <_2 K <_2 J} d'_{K,J} \circ d'_{I,K} : \Lambda^* V(D_I) \rightarrow \Lambda^* V(D_J) = 0 {\rm .} \]
Both checks may be made along the same lines as the checks in Szabo's \cite{szabo}, although this case is easier since there is no auxiliary data of a \emph{decoration}.  The first check should be carried out for each 3-dimensional configuration, and the second for each 4-dimensional configuration.  We leave these checks for the reader to verify.

Finally we very briefly give an example that makes use of a quantum rather than a homological filtration.  The idea can be summarized simply as replacing the ``saddle" differential by the sum of a saddle and a dotted saddle (in the sense of Bar-Natan \cite{bncob}).  Then all differentials raise the homological grading by $1$, while respecting a quantum filtration.

Explicitly, if $I <_1 J$ we define components of the deformed differential as
\[ d'_{I,J} = d_{I,J} + d_{I,J} \wedge x \]
where $d_{I,J}$ is the Khovanov differential, and where by $\wedge x$ we mean post composition by wedging with a generator $x$ corresponding to one of the (possibly two) circles of the resolution $J$ in the boundary of the pair of pants cobordism component.  This is independent of the choice of $x$ (as the ``dotting" formalism above suggests).

\begin{remark}
The first deformation above in the case $\underline{\mathbf{a}} = (1,1,1,\ldots)$ was studied independently by Juh{\'a}sz and Marengon.  In \cite[Section 6]{juhaszmarengon}, they also show that the isomorphism class of the resulting spectral sequence is a link type invariant.
\end{remark}

\bibliographystyle{hplain}
\bibliography{References}

\begin{thebibliography}{10}

\bibitem{daemi}
{A. Daemi}.
\newblock {Abelian Gauge Theory, Knots and Odd Khovanov Homology}.
\newblock {\em http://arxiv.org/abs/1508.07650}, 2015.

\bibitem{bald7}
J.A. Baldwin.
\newblock On the spectral sequence from {K}hovanov homology to {H}eegaard
  {F}loer homology.
\newblock {\em Int. Math. Res. Not.}, 2010, 2010.

\bibitem{baldpla}
J.A. Baldwin and O.~Plamenevskaya.
\newblock Khovanov homology, open books, and tight contact structures.
\newblock {\em Adv. Math.}, 224:2544--2582, 2010.

\bibitem{bncob}
Dror Bar-Natan.
\newblock Khovanov's homology for tangles and cobordisms.
\newblock {\em Geom. Topol.}, 9:1443--1499, 2005.

\bibitem{batseed}
Joshua Batson and Cotton Seed.
\newblock A link-splitting spectral sequence in {K}hovanov homology.
\newblock {\em Duke Math. J.}, 164(5):801--841, 2015.

\bibitem{bloom}
Jonathan~M. Bloom.
\newblock A link surgery spectral sequence in monopole {F}loer homology.
\newblock {\em Adv. Math.}, 226(4):3216--3281, 2011.

\bibitem{BottTu}
R.~Bott and L.~Tu.
\newblock {\em Differential forms in algebraic topology}, volume~82 of {\em
  Graduate Texts in Mathematics}.
\newblock Springer-Verlag, New York-Berlin, 1982.

\bibitem{scaduto}
{C. Scaduto}.
\newblock {Instantons and odd Khovanov homology}.
\newblock 2014, arxiv.org/pdf/1401.2093.

\bibitem{CS}
J.~Scott Carter and Masahico Saito.
\newblock Reidemeister moves for surface isotopies and their interpretation as
  moves to movies.
\newblock {\em J. Knot Theory Ramifications}, 2(3):251--284, 1993.

\bibitem{CMW}
David Clark, Scott Morrison, and Kevin Walker.
\newblock Fixing the functoriality of {K}hovanov homology.
\newblock {\em Geom. Topol.}, 13(3):1499--1582, 2009.

\bibitem{eilenbergsteenrod}
S.~Eilenberg and N.~Steenrod.
\newblock {\em Foundations of Algebraic Topology}.
\newblock Princeton University Press, 1952.

\bibitem{FQ}
Michael~H. Freedman and Frank Quinn.
\newblock {\em Topology of 4-manifolds}, volume~39 of {\em Princeton
  Mathematical Series}.
\newblock Princeton University Press, Princeton, NJ, 1990.

\bibitem{griw}
E.~Grigsby and S.~Wehrli.
\newblock On the colored {J}ones polynomial, sutured {F}loer homology, and knot
  {F}loer homology.
\newblock {\em Adv. Math.}, 223(6):2114--2165, 2010.

\bibitem{heddmark}
M.~Hedden and T.~Mark.
\newblock {Floer homology and fractional Dehn twists}.
\newblock {\em arxiv.org/abs/1501.01284}, 2015.

\bibitem{heddcable}
Matthew Hedden.
\newblock Khovanov homology of the 2-cable detects the unknot.
\newblock {\em Math. Res. Lett.}, 16(6):991--994, 2009.

\bibitem{heddni}
Matthew Hedden and Yi~Ni.
\newblock Khovanov module and the detection of unlinks.
\newblock {\em Geom. Topol.}, 17(5):3027--3076, 2013.

\bibitem{heddwatson}
Matthew Hedden and Liam Watson.
\newblock Does {K}hovanov homology detect the unknot?
\newblock {\em Amer. J. Math.}, 132(5):1339--1345, 2010.

\bibitem{jacobsson}
M.~Jacobsson.
\newblock An invariant of link cobordisms from {K}hovanov homology.
\newblock {\em Algebr. Geom. Topol.}, 4:1211--1251, 2004.

\bibitem{juhaszmarengon}
A.~Juh{\'a}sz and M.~Marengon.
\newblock {Cobordism maps in link Floer homology and the reduced Khovanov
  TQFT}.
\newblock 2015, math.GT/1503.00665.

\bibitem{kh1}
M.~Khovanov.
\newblock A categorification of the {J}ones polynomial.
\newblock {\em Duke Math. J.}, 101(3):359--426, 2000.

\bibitem{khcob}
Mikhail Khovanov.
\newblock An invariant of tangle cobordisms.
\newblock {\em Trans. Amer. Math. Soc.}, 358(1):315--327, 2006.

\bibitem{km8}
P.~B. Kronheimer and T.~S. Mrowka.
\newblock Gauge theory for embedded surfaces. {I}.
\newblock {\em Topology}, 32(4):773--826, 1993.

\bibitem{km3}
P.~B. Kronheimer and T.~S. Mrowka.
\newblock Khovanov homology is an unknot-detector.
\newblock {\em Publ. Math. Inst. Hautes \'Etudes Sci.}, (113):97--208, 2011.

\bibitem{Lee}
E.S. Lee.
\newblock {An endomorphism of the {K}hovanov invariant}.
\newblock {\em Adv. Math.}, 197(2):554--586, 2002, math.GT/0201105.

\bibitem{lobb}
A.~Lobb.
\newblock A slice-genus lower bound from sl(n) {K}hovanov-{R}ozansky homology.
\newblock {\em Adv. Math.}, 222(4):1220--1276, 2009.

\bibitem{lobbzentner}
A.~Lobb and R.~Zentner.
\newblock On spectral sequences from {K}hovanov homology.
\newblock 2013, math.GT/1310.7909.

\bibitem{MacLane}
Saunders MacLane.
\newblock {\em Categories for the working mathematician}.
\newblock Springer-Verlag, New York-Berlin, 1971.
\newblock Graduate Texts in Mathematics, Vol. 5.

\bibitem{osz12}
P.~Ozsv{\'a}th and Z.~Szab{\'o}.
\newblock On the {H}eegaard {F}loer homology of branched double-covers.
\newblock {\em Adv. Math.}, 194(1):1--33, 2005.

\bibitem{ras3}
Jacob Rasmussen.
\newblock Khovanov homology and the slice genus.
\newblock {\em Invent. Math.}, 182(2):419--447, 2010.

\bibitem{lrob2}
Lawrence Roberts.
\newblock On knot {F}loer homology for some fibered knots.
\newblock {\em Commun. Contemp. Math.}, 15(1):1250053, 38, 2013.

\bibitem{lrob1}
Lawrence~P. Roberts.
\newblock On knot {F}loer homology in double branched covers.
\newblock {\em Geom. Topol.}, 17(1):413--467, 2013.

\bibitem{lrob3}
L.P. Roberts.
\newblock Notes on the {H}eegaard-{F}loer link surgery spectral sequence.
\newblock 2008, math.GT/0808.2817.

\bibitem{rud93}
Lee Rudolph.
\newblock Quasipositivity as an obstruction to sliceness.
\newblock {\em Bull. Amer. Math. Soc. (N.S.)}, 29(1):51--59, 1993.

\bibitem{szabo}
Z.~Szab{\'o}.
\newblock {A geometric spectral sequence in Khovanov homology}.
\newblock 2010, math.GT/1010.4252.

\bibitem{tur}
P.~Turner.
\newblock Calculating {B}ar-{N}atan's characteristic two {K}hovanov homology.
\newblock {\em J. Knot Theory Ram.}, 15(10):1335--1356, 2006.

\end{thebibliography}
\end{document}